\documentclass[12pt]{amsart}
\usepackage{mathrsfs}
\usepackage{amssymb,amsmath}
\usepackage{setspace}
\usepackage{color}
\usepackage[all]{xy}
\usepackage{graphics}
\usepackage[dvips]{sseq}
\setlength{\textwidth}{16truecm}
\setlength{\textheight}{22.2truecm}
\date{May 30, 2011 (revision)}
\calclayout
\newtheorem{dummy}{anything}[section]
\newtheorem{Theorem}[dummy]{Theorem}
\newtheorem*{Thm}{Theorem \ref{thm:typetwo}}
\newtheorem*{Thm1}{Theorem \ref{thm:one}}

\newtheorem{Lemma}[dummy]{Lemma}
\newtheorem{Proposition}[dummy]{Proposition}

\theoremstyle{definition}
\newtheorem{definition}[dummy]{Definition}
 
 \newtheorem*{construct}{Connected sum along a circle}
 
 \newtheorem{remark}[dummy]{Remark}
 
 \newtheorem*{acknowledgement}{Acknowledgement}



\newcommand{\z}{\mathbb Z}
\newcommand{\w}{\widetilde}
\newcommand{\rp}{\mathbb R \mathrm P}
\newcommand{\cp}{\mathbb C \mathrm P}
\DeclareMathOperator{\rk}{rank}
\newcommand{\scs}{\, \sharp_ {S^1}}
\newcommand{\Sharp}{\, \sharp\,}
\newcommand{\cxi}{c _1(\xi)}
\newcommand\abelian{fibered type\ }
\newcommand\abelianB{fibered type}
\DeclareMathOperator{\Pin}{Pin} 
\DeclareMathOperator{\TopPin}{TopPin}
\DeclareMathOperator{\Top}{Top}
\DeclareMathOperator{\Th}{Th}
\DeclareMathOperator{\Spin}{Spin}
\newcommand{\XX}[1]{X^5(#1)}
\newcommand{\vsp}{\phantom{$\Big ($}}

\begin{document}
\title{On Certain $5$-manifolds with Fundamental Group of Order $2$}
\author{Ian Hambleton}
\address{Department of Mathematics \& Statistics
 \newline\indent
McMaster University
 \newline\indent
Hamilton, ON  L8S 4K1, Canada}
\email{hambleton{@}mcmaster.ca}
\author{Yang Su}
\address{Hua Loo-Keng Key Laboratory of Mathematics
\newline \indent
Chinese Academy of Sciences
\newline \indent
Beijing, 100190, China}
\email{suyang{@}math.ac.cn}

\thanks{Research partially supported by
NSERC Research Grant A4000. The first author would like to thank the Max Planck Institut f\"ur Mathematik in Bonn for its hospitality and support.}

\begin{abstract}
In this paper, an explicit classification result for certain
$5$-manifolds with fundamental group $\z/2$ is obtained. These
manifolds include  total spaces of circle bundles over
simply-connected $4$-manifolds.
\end{abstract}

\maketitle

\section{Introduction}\label{sec:one}

The classification of manifolds with certain properties is a central
topic of topology, and in dimensions $\ge 5$ methods from handlebody theory and surgery
have been successfully applied  to a number of cases.
One of the first examples was the complete classification of
simply-connected $5$-manifolds  by Smale \cite{smale2} and Barden
\cite{barden1} in 1960's. This result has been very useful for studying the
existence of other geometric structures on $5$-manifolds, such as
the existence of Riemannian metrics with given curvature properties.
We consider this as a model and motivation for studying the classification of
non-simply connected $5$-manifolds.

\smallskip
An   orientable $5$-manifold $M$
 is said to be of \emph{\abelian}if $\pi_2(M)$ is
a trivial $\z [\pi_1(M)]$-module. In this paper, we will be
concerned with closed, orientable \abelian $5$-manifolds $M^5$ with
$\pi_1(M) \cong \z /2$, and torsion-free $H_2(M;\z)$. The
classification of these manifolds in the smooth (or PL) and 
topological categories is  given in Section \ref{sec:three}. We give a
simple set of invariants, namely the rank of $H_2(M ;\z)$ and the
$\Pin^{\dagger}$-bordism ($\TopPin^{\dagger}$-bordism) class 
 of a
characteristic submanifold,  which determine the diffeomorphism
(homeomorphism) types. Here is the main result in the smooth case.

\begin{Thm1}
Two smooth, closed, orientable \abelian $5$-manifolds $M$ and $M'$, 
with fundamental group $\z /2$ and torsion-free second homology
group, are diffeomorphic if and only if they have the same $w_2$-type,
$\rk H_ 2(M)=\rk H_ 2(M')$, and 
$[P]=[P'] \in \Omega_4^{\Pin^{\dagger}}/\pm$, 
where $P$ and $P'$ are characteristic
submanifolds and $\dagger = c, -, +$ for $w_2$-types \textup I,
\textup{II}, \textup{III} respectively.
\end{Thm1}

Here $\Omega_4^{\Pin^{\dagger}} /\pm$ denotes a quotient of the Pin-bordism group by a certain subgroup of order two (see Definition \ref{def:notation}). The Pin-bordism variants and the $w_2$-type notation are explained in Section \ref{sec:two}.

The homeomorphism classification is given in Theorem \ref{thm:two}.
We also determine all the relation among these invariants  (Theorem
\ref{thm:three}), and give a list of standard forms for these
manifolds (Theorem \ref{thm:four}, Theorem \ref{thm:four.five}).

\smallskip
One motivation for this classification problem comes from the study of
circle bundles $M^5$ over simply-connected $4$-manifolds, since
their total spaces are of \abelianB. Duan-Liang
\cite{duan-liang1} gave an explicit geometric description of $M^5$ for
simply-connected total spaces, making essential use of the results
of Smale and Barden. As an application of our results, in
Section \ref{sec:six}
we give an explicit geometric
description when the total spaces have fundamental group $\z /2$.
\begin{Thm}[type II]
Let $X$ be a closed, simply-connected, topological spin
$4$-manifold, $\xi\colon  S^1 \hookrightarrow M^5 \to X$ be a circle
bundle over $X$ with $c_1(\xi)=2\cdot$(primitive). Then we have
\begin{enumerate}\addtolength{\itemsep}{0.3\baselineskip}
\item if $KS(X)=0$, then $M$ is smoothable and $M$ is diffeomorphic
to $$(S^2 \times \mathbb R \mathrm P^3) \scs ((\sharp_k\,  S^2
\times S^2)\times S^1);$$

\item if $KS(X)=1$, then $M$ is non-smoothable and $M$ is
homeomorphic to $$*(S^2 \times \mathbb R \mathrm P^3) \scs
((\sharp_k\,  S^2 \times S^2)\times S^1).$$
\end{enumerate}
Where $k=\rk H_2(X)/2-1$.
\end{Thm}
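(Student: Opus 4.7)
The plan is to apply Theorem \ref{thm:one} (smooth, $KS(X)=0$) or Theorem \ref{thm:two} (topological, $KS(X)=1$) by matching three invariants of $M$ with those of the standard model on the right-hand side: the $w_2$-type, $\rk H_2$, and the $\Pin^-$-bordism (resp.\ $\TopPin^-$-bordism) class of a characteristic submanifold.

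First we compute the invariants of $M$. The long exact homotopy sequence of the fibration and primitivity of $\alpha := \cxi/2$ give $\pi_1(M) \cong \z/2$, with simply-connected double cover $\w M = S(L_0)$, the $S^1$-bundle with $c_1 = \alpha$; in particular $M$ is of \abelianB. The Gysin sequence identifies $H_2(M;\z) \cong \alpha^\perp \subset H_2(X;\z)$, torsion-free of rank $b_2(X) - 1 = 2k+1$. The vertical tangent bundle of an oriented $S^1$-bundle is trivial, so $TM = \pi^* TX \oplus \underline{\mathbb R}$ and thus $w_2(M) = \pi^* w_2(X) = 0$ because $X$ is spin, so $M$ has $w_2$-type II. The Gysin sequence with $\z/2$ coefficients (using $\cxi \equiv 0 \pmod 2$) makes $\pi^*\colon H^4(X;\z/2) \to H^4(M;\z/2)$ an isomorphism, hence $KS(M) = \pi^* KS(X)$ and $M$ is smoothable iff $X$ is. The right-hand model shares the same $\pi_1$, $w_2$-type and rank of $H_2$ by van Kampen and Mayer--Vietoris; a characteristic submanifold of the model can be taken as $P_0 = (S^2 \times \rp^2) \Sharp (\sharp_k S^2 \times S^2)$, which is $\Pin^-$-null-bordant, since $S^2 \times \rp^2 = \partial(D^3 \times \rp^2)$ as $\Pin^-$-manifolds while each spin summand $S^2 \times S^2$ has signature zero and thus bounds in $\Omega_4^{\Spin} \subset \Omega_4^{\Pin^-}$.

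The remaining point is to compute the $\Pin^-$-bordism class $[P] \in \Omega_4^{\Pin^-}/\pm$ of a characteristic submanifold $P$ of $M$. A natural construction: pick an oriented surface $\Sigma \subset X$ with $[\Sigma] = \mathrm{PD}(\alpha)$; since $c_1(L_0^{\otimes 2})$ vanishes on $X \setminus \Sigma$, there is a section of $M$ over the complement of a tubular neighbourhood $N$ of $\Sigma$, whose image is capped off inside the disk bundle $M|_N$ to produce a closed $P \subset M$ with induced $\Pin^-$-structure. The cleanest route for identifying $[P]$ is to treat first the ``standard'' case $X_0 = \sharp_{k+1}(S^2 \times S^2)$ with $\alpha = [S^2 \times \{\mathrm{pt}\}]$ in the first $S^2 \times S^2$ summand: there $L_0$ is trivial outside the first summand and $S(L_0^{\otimes 2})|_{S^2 \times S^2} = S^2 \times L(2,1) = S^2 \times \rp^3$, so $M$ decomposes globally as $(S^2 \times \rp^3) \scs ((\sharp_k S^2 \times S^2) \times S^1)$, and the construction of $P$ above yields exactly the standard $P_0$. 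For a general spin $X$ with the same $b_2$ and $KS$, one argues that $[P] \in \Omega_4^{\Pin^-}/\pm$ depends only on $b_2(X)$ and $KS(X)$; a natural approach is to cobord pairs $(X, \alpha)$ to $(X_0, \alpha_0)$ through topologically spin 4-manifolds with primitive classes and check that $P$ extends over the cobordism.

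The main obstacle is this last step, showing $[P]$ is independent of the specific intersection form of $X$. Once this independence is established, Theorem \ref{thm:one} (resp.\ Theorem \ref{thm:two}) applies directly, and the non-smoothable $KS=1$ case proceeds identically using the $\TopPin^-$-bordism variant to recover the starred model.
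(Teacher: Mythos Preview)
Your computation of the basic invariants of $M$ (fundamental group, $w_2$-type, rank of $H_2$, smoothability) is correct and matches the paper. However, you have manufactured an obstacle that does not exist. The ``main obstacle'' you identify---showing that $[P]\in\Omega_4^{\Pin^-}/\pm$ is independent of the intersection form of $X$---is vacuous, because $\Omega_4^{\Pin^-}=0$ (Kirby--Taylor; this is stated explicitly in the paper immediately after Theorem~\ref{thm:one}). Hence in the smooth case there is \emph{nothing} to check beyond the $w_2$-type and $\rk H_2$, and Theorem~\ref{thm:one} applies instantly. Your explicit construction of $P$ and the cobordism argument for pairs $(X,\alpha)$ are unnecessary.

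In the topological case your last sentence is too quick. Here $\Omega_4^{\TopPin^-}\cong\z/2$, detected by the Kirby--Siebenmann invariant of $P$, so one genuinely needs $KS(P)=KS(X)$. You computed $KS(M)=\pi^*KS(X)$, but that is a statement about $M$, not about the characteristic submanifold $P$. The paper handles this via Lemma~\ref{KS}: using the geometric description $P=(X-\mathring N(F))\cup_\partial B$ (Lemma~\ref{three}), one sees that $P$ is topologically bordant to $X\sqcup V$ with $V$ smooth, whence $KS(P)=KS(X)$ by bordism invariance of $KS$. With that in hand, Theorem~\ref{thm:two} finishes the $KS(X)=1$ case.
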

In the statement, $*(S^2 \times \mathbb R \mathrm P^3)$ denotes a non-smoothable manifold homotopy equivalent to $S^2 \times \mathbb R \mathrm P^3$.
The corresponding results for the other $w_2$-types are given in
Theorem \ref{thm:typethree} and Theorem \ref{thm:typeone}.

Classification results can also be useful in studying the
 existence problem for
geometric structures on \abelian $5$-manifolds.  For example, a
closed, orientable $5$-manifold with $\pi_1=\z/2$, such that $w_2$ vanishes on homology, admits a contact structure  by
the work of Geiges and Thomas \cite{geiges-thomas1}.
 They showed that all such
manifolds can be obtained by surgery on $2$-dimensional links from
exactly one of ten model manifolds. 

The topology of such manifolds
of \abelian are described explicitly for the first time by our
results, and we note that all the manifolds listed in Theorem
\ref{thm:four} satisfy the necessary condition $W_3=0$ for the
existence of contact structures.
Our  results have already been used by Geiges and  Stipsicz \cite{geiges-stipsicz1} to prove new existence theorems for contact structures on $5$-manifolds.  It may be possible to obtain
similar information for \abelian $5$-manifolds which admit Sasakian
or Einstein metrics by using the work of Boyer and Galicki
\cite{ boyer-galicki1}.

The surgery exact sequence of Wall \cite{wall-book} provides a way
 to classify manifolds within a given (simple) homotopy type.
However, in the application to concrete problems, one often faces
homotopy theoretical difficulties. In our situation, the setting of
the problems is appropriate for the application of the modified
surgery methods developed by Kreck \cite{kreck3}. The proofs in
Section \ref{sec:four} and Section \ref{sec:five} are based on this
theory.

In dimension $5$, the smooth category and the PL category are equivalent.
By convention,  $M$ stands for either a smooth or a topological manifold
when not specified.

\begin{acknowledgement}
The second author would like to thank the
 James Stewart Centre for Mathematics at McMaster
  and the Institute for Mathematical
Sciences of the National University of Singapore for research visits
in Fall, 2008. The authors would like to thank M.~Olbermann for
helpful discussions.

\end{acknowledgement}

\section{Preliminaries}\label{sec:two}
\subsection{$\Pin^{\dagger}$-structures on vector bundles}
Recall that the groups $\Pin^{\pm}(n)$ are central extensions of $O(n)$ by $\z /2$
$$1 \to \mathbb Z/2 \to \Pin^{\pm}(n) \to O(n) \to 1,$$
and $\Pin^{c}(n)$ is a central extension of $O(n)$ by $U(1)$
$$1 \to U(1) \to \Pin^{c}(n) \to O(n) \to 1$$
(see \cite[\S 1]{kirby-taylor2} and \cite[\S 2]{hkt1}). Let $\dagger
\in \{c, +, - \}$. After stabilization we have classifying spaces
$B\Pin^{\dagger}$ and fibrations $B\Pin^{\dagger} \to BO $. A
$\Pin^{\dagger}$-structure on a stable vector bundle $\xi$ over a
space $X$ is a fiber homotopy class of lifts of a classifying map
$c_{\xi}\colon  X \to BO$ to $B\Pin^{\dagger}$.

\begin{Lemma}\cite[Lemma 1]{hkt1}
\begin{enumerate}
\item
A vector bundle $\xi$ over $X$ admits a $\Pin^{\dagger}$-structure if
and only if
$$\begin{array}{ll}
\beta (w _2(\xi)) =0 & \mathrm{for} \ \ \dagger =c, \\
w _2(\xi)=0 & \mathrm{for}\ \ \dagger =+, \\
w_ 2(\xi)=w_ 1(\xi)^2 & \mathrm{for}\ \ \dagger =-, \\
\end{array}$$
where $\beta \colon  H^2(X;\z/2) \to H^3(X;\z)$ is the Bockstein
operator induced from the exact coefficient sequence $\z \to \z \to
\z/2$.

\item
$\Pin^{\pm}$-structures are in bijection with $H^1(X;\z/2)$
and $\Pin^c$-structures are in bijection with $H^2(X;\z)$.
\end{enumerate}
\end{Lemma}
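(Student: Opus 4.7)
My plan is to attack both halves of the lemma by obstruction theory applied to the principal fibrations
\[
K(\z/2,1) \longrightarrow B\Pin^{\pm} \longrightarrow BO, \qquad BU(1) = K(\z,2) \longrightarrow B\Pin^{c} \longrightarrow BO,
\]
which arise directly from the extensions defining $\Pin^{\pm}$ and $\Pin^{c}$. Because the fiber of each fibration is a single Eilenberg--MacLane space, the fibration is classified by one $k$-invariant $\kappa^{\dagger}$, lying in $H^{2}(BO;\z/2)$ for $\dagger = \pm$ and in $H^{3}(BO;\z)$ for $\dagger = c$; a lift of $c_{\xi}\colon X\to BO$ exists if and only if $c_{\xi}^{*}\kappa^{\dagger}=0$, and when lifts exist the set of fiber-homotopy classes of lifts is a torsor over $[X,\mathrm{fiber}]$, which is $H^{1}(X;\z/2)$ or $H^{2}(X;\z)$ respectively. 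This observation immediately delivers part (2) and reduces part (1) to the identification of the three $k$-invariants.

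For $\dagger = \pm$, we have $H^{2}(BO;\z/2)=\z/2\{w_{1}^{2}\}\oplus\z/2\{w_{2}\}$, and restricting along $BSO\to BO$ must recover the Spin $k$-invariant $w_{2}$, so $\kappa^{\pm}=w_{2}+\epsilon^{\pm}w_{1}^{2}$ for some $\epsilon^{\pm}\in\{0,1\}$. I pin down $\epsilon^{\pm}$ by pulling back along the tautological line bundle $L\to BO(1)=\rp^{\infty}$, where $w_{2}(L)=0$ while $w_{1}(L)^{2}\neq 0$. Direct inspection of the relevant $1$-dimensional Clifford algebras shows $\Pin^{+}(1)\cong \z/2\oplus\z/2$ (because the lifted reflection $e_{1}$ satisfies $e_{1}^{2}=+1$), so the extension is trivial over $O(1)$ and $\kappa^{+}$ pulls back to $0$, forcing $\epsilon^{+}=0$; whereas $\Pin^{-}(1)\cong \z/4$ (because $e_{1}^{2}=-1$) is the non-trivial central extension of $O(1)$ by $\z/2$, so $\kappa^{-}$ pulls back to the non-zero class $w_{1}^{2}$, forcing $\epsilon^{-}=1$.

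For $\dagger=c$, the extension $1\to U(1)\to\Pin^{c}\to O\to 1$ is the pushout of either $\Pin^{\pm}$-extension along the inclusion $\z/2\hookrightarrow U(1)$, so its $k$-invariant is the image of $\kappa^{\pm}$ under the coefficient map $H^{2}(BO;\z/2)\to H^{3}(BO;\z)$ induced by this inclusion, which is precisely the integral Bockstein $\beta$ associated with $\z\to\z\to\z/2$. Since $Sq^{1}(w_{1}^{2})=2w_{1}\,Sq^{1}(w_{1})=0$ and $H^{3}(BO;\z)$ is $2$-torsion, one has $\beta(w_{1}^{2})=0$, so both $\Pin^{+}$ and $\Pin^{-}$ give the same answer $\kappa^{c}=\beta(w_{2})$, matching the stated criterion.

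The main obstacle is clearly the sign determination distinguishing $\Pin^{+}$ from $\Pin^{-}$; once the two cases are correctly identified in $H^{2}(BO;\z/2)$, the $\Pin^{c}$ case follows formally from the pushout description, and the counting statement (2) is automatic from the Eilenberg--MacLane structure of the fiber. The cleanest reduction is to the universal line bundle over $\rp^{\infty}$, the smallest space where $w_{2}$ and $w_{1}^{2}$ can be told apart.
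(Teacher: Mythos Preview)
The paper does not actually prove this lemma; it is quoted verbatim from \cite[Lemma 1]{hkt1} and used as a black box. So there is no ``paper's proof'' to compare against, and your proposal is in fact supplying an argument the paper omits.

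Your argument is correct and is the standard one. The fibrations $K(\z/2,1)\to B\Pin^{\pm}\to BO$ and $K(\z,2)\to B\Pin^{c}\to BO$ are principal because the underlying extensions are central, so obstruction theory reduces everything to identifying a single $k$-invariant; the restriction to $BSO$ pins down the $w_2$ part, and the probe by the tautological line over $BO(1)=\rp^{\infty}$ (where $w_2=0$ but $w_1^{2}\neq 0$) together with the elementary computation $\Pin^{+}(1)\cong(\z/2)^{2}$, $\Pin^{-}(1)\cong\z/4$ separates the two signs. The pushout description of $\Pin^{c}$ then gives $\kappa^{c}=\beta(\kappa^{\pm})$, and part~(2) is immediate from the Eilenberg--MacLane fiber.

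One small tightening: to conclude $\beta(w_1^{2})=0$ you appeal to $Sq^{1}(w_1^{2})=0$ together with ``$H^{3}(BO;\z)$ is $2$-torsion''. The latter is true but not entirely obvious, and you do not need it. Simply observe that $w_1^{2}=Sq^{1}w_1$ is the mod~$2$ reduction of the integral class $\beta(w_1)\in H^{2}(BO;\z)$, and $\beta$ vanishes on reductions of integral classes by exactness of the Bockstein sequence. This gives $\beta(w_1^{2})=0$ directly, so $\beta(w_2)=\beta(w_2+w_1^{2})$ and both $\Pin^{\pm}$ pushouts yield the same $\kappa^{c}=\beta(w_2)$.
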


$\Pin^{\pm}$-structures on a vector bundle $\xi$ over $X$ are related to Spin-structures on an associated vector bundle:
\begin{Lemma}\cite[Lemma 1.7]{kirby-taylor2}\label{lem:pin-spin}
Let $\mathcal Spin(\xi)$ denote the set of equivalence classes of Spin structures on $\xi$, and $\mathcal Pin^{\pm}(\xi)$ denote the set of equivalence classes of $\Pin^{\pm}$-structures on $\xi$. There are bijections
$$\mathcal Pin^{-}(\xi) \to \mathcal Spin(\xi \oplus \det \xi)$$
$$\mathcal Pin^{+}(\xi) \to \mathcal Spin(\xi \oplus 3\det \xi).$$
\noindent
which are natural under the actions of $H^{1}(X;\z/2)$.
\end{Lemma}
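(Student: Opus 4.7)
The proof breaks into three stages: matching the existence obstructions, constructing a natural map via a Clifford-algebra level group embedding, and verifying equivariance under the $H^1(X;\z/2)$-actions on both sides.

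First, by the Whitney sum formula together with $w_1(k\det\xi)=k\cdot w_1(\xi)$ and $w_2(k\det\xi)=\binom{k}{2}w_1(\xi)^2\pmod 2$, one checks that
\[
w_1(\xi\oplus\det\xi)=0,\qquad w_2(\xi\oplus\det\xi)=w_2(\xi)+w_1(\xi)^2,
\]
and
\[
w_1(\xi\oplus 3\det\xi)=0,\qquad w_2(\xi\oplus 3\det\xi)=w_2(\xi).
\]
By the previous lemma these are exactly the $\Pin^-$ and $\Pin^+$ conditions, so the two sides of each claimed bijection are simultaneously (non)empty.

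Second, I would produce the map at the group level. Inside the Clifford algebras there is an algebra isomorphism $C\ell^-(n)\cong C\ell^+(n+1)^{\mathrm{ev}}$ sending $e_i\mapsto e_i e_{n+1}$, which restricts to an embedding $\Pin^-(n)\hookrightarrow\Spin(n+1)$ covering $A\mapsto A\oplus\det A$ in $O(n+1)$. An analogous Clifford-algebra identification yields $\Pin^+(n)\hookrightarrow\Spin(n+3)$ covering $A\mapsto A\oplus(\det A)^{\oplus 3}$. Applying the Borel construction gives maps $B\Pin^{\pm}\to B\Spin$ in commutative squares over $BO\to BSO$; because both vertical maps are principal $K(\z/2,1)$-fibrations and the Stiefel--Whitney obstructions agree by the first step, these squares are homotopy cartesian, and they provide the desired natural map on sets of lifts.

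Third, I would upgrade this map to a bijection by observing that $\mathcal{P}in^{\pm}(\xi)$ and the relevant set of Spin structures are torsors over $H^1(X;\z/2)$ whenever nonempty, and then checking that the constructed map respects these actions. The main obstacle is precisely this equivariance verification: the $\Pin^{\pm}$-action changes a structure by a $\z/2$-cocycle, whereas the Spin-action twists the associated Spin bundle by the corresponding real line bundle, and one must trace this carefully through the Clifford-algebra embedding. The $\Pin^+$ case is the subtler one, where the triple copy of $\det\xi$ is essential — a single copy would introduce an extra $w_1(\xi)^2$ term in $w_2$, so the twist would fail to match. Once equivariance is established, naturality with respect to bundle maps over $X$ follows automatically from the functorial construction.
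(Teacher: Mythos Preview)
The paper does not supply its own proof of this lemma: it is quoted verbatim from \cite[Lemma 1.7]{kirby-taylor2} and used as a black box. There is therefore nothing in the paper to compare your argument against.

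That said, your three-step outline is exactly the Kirby--Taylor argument. The Stiefel--Whitney computations in your first step are correct and show that the two sides are simultaneously nonempty. The Clifford-algebra embedding $e_i\mapsto e_i e_{n+1}$ in your second step is the standard device, and it does produce a group homomorphism $\Pin^-(n)\to\Spin(n+1)$ covering $A\mapsto A\oplus\det A$; the $\Pin^+$ version with three copies of $\det$ is handled analogously in Kirby--Taylor. Your third step, the torsor argument, is also how the bijection is usually concluded once the map exists. One small caution: the Clifford-algebra sign conventions for $C\ell^{\pm}$ vary across references, and getting the $\Pin^+$ versus $\Pin^-$ labels to match the paper's convention (tangential, following \cite{kirby-taylor2}) requires care; your sketch is correct in spirit but you should double-check the sign of $e_i^2$ when writing out the $\Pin^+(n)\hookrightarrow\Spin(n+3)$ embedding explicitly.
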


It is well known that a $\Spin^c$-structure on a vector bundle $\xi$
is the same as a Spin-structure on $\xi \oplus \gamma$, where
$\gamma$ is a complex line bundle with $c_1(\gamma) \equiv w_2(\xi)
\pmod 2$ (see \cite[Cor.~D.4]{lawson-michelsohn1}). Similarly, a $\Pin^c$-structure on a
vector bundle $\xi$ may be viewed as a $\Pin^-$-structure on $\xi
\oplus \gamma$, where $\gamma$ is a complex line bundle with
$c_1(\gamma) \equiv w_1(\xi)^2 +w_2(\xi) \pmod 2$.


\subsection{$w_{2}$-types and characteristic submanifolds}\label{two-type}
Let $M$ be a closed, orientable $5$-manifold with $\pi_1(M) \cong
\z/2$ and universal cover $\w{M}$. The manifold $M$ is said to be
of $w_{2}$-\emph{type \textup{I}} if $w_ 2(\w{M}) \ne
0$,  of $w_{2}$-\emph{type \textup{II}} if $w_2(M)=0$, and
of $w_{2}$-\emph{type \textup{III}} if $w_2(M) \ne 0$ and $w_
2(\widetilde{M})=0$.
. 
By the universal
coefficient theorem, there is an exact sequence
$$0 \to \mathrm{Ext}(H_ 1(M), \mathbb Z/2) \to H^2(M;\mathbb Z/2) \to \mathrm{Hom}(H_ 2(M), \mathbb Z/2) \to 0.$$

\begin{Lemma}
$M$ is of type 
type \textup{III} $\Leftrightarrow w_ 2(M) \ne 0$ and $w_ 2(M) \in \mathrm{Ext}(H_ 1(M), \mathbb
Z/2)$.
\end{Lemma}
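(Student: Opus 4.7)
After noting that the clause $w_2(M) \neq 0$ appears on both sides of the equivalence, the lemma reduces to
$$w_2(\widetilde{M}) = 0 \iff w_2(M) \in \mathrm{Ext}(H_1(M), \mathbb{Z}/2) \subset H^2(M;\mathbb{Z}/2).$$
Since the tangent bundle of $\widetilde{M}$ is pulled back from that of $M$, we have $w_2(\widetilde{M}) = p^* w_2(M)$, where $p \colon \widetilde{M} \to M$ denotes the covering projection. The task therefore amounts to identifying $\ker(p^*|_{H^2(M;\mathbb{Z}/2)})$ with the Ext subgroup appearing in the UCT sequence displayed just before the statement.

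The first step is to identify $\mathrm{Ext}(H_1(M), \mathbb{Z}/2) \subset H^2(M;\mathbb{Z}/2)$ with the image of $f^* \colon H^2(B\mathbb{Z}/2;\mathbb{Z}/2) \to H^2(M;\mathbb{Z}/2)$, where $f \colon M \to B\pi_1(M) = B\mathbb{Z}/2$ is the classifying map of the universal cover. The UCT applied to $B\mathbb{Z}/2$ (which has $H_1 = \mathbb{Z}/2$, $H_2 = 0$) gives $H^2(B\mathbb{Z}/2; \mathbb{Z}/2) \cong \mathrm{Ext}(\mathbb{Z}/2,\mathbb{Z}/2) \cong \mathbb{Z}/2$, a pure Ext class. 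Because $f$ is an isomorphism on $\pi_1$ and hence on $H_1$, naturality of the UCT sequence forces $f^*$ to send the generator of $H^2(B\mathbb{Z}/2;\mathbb{Z}/2)$ injectively onto a generator of $\mathrm{Ext}(H_1(M), \mathbb{Z}/2)$, with trivial component in $\mathrm{Hom}(H_2(M),\mathbb{Z}/2)$.

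It remains to show $\ker(p^*|_{H^2}) = \mathrm{image}(f^*|_{H^2})$. The inclusion $\supseteq$ is clear, since $\widetilde{M} \to M \xrightarrow{f} B\mathbb{Z}/2$ factors through the contractible total space of $E\mathbb{Z}/2 \to B\mathbb{Z}/2$, giving $p^* f^* = 0$. For the reverse inclusion I would invoke the Serre spectral sequence of the fibration $\widetilde{M} \to M \to B\mathbb{Z}/2$ with $\mathbb{Z}/2$ coefficients. Simple-connectivity of $\widetilde{M}$ gives $H^1(\widetilde{M};\mathbb{Z}/2) = 0$, so $E_2^{1,1} = 0$; no nonzero differential can enter or leave $E_2^{2,0} = H^2(B\mathbb{Z}/2;\mathbb{Z}/2) = \mathbb{Z}/2$ for range reasons, so $E_\infty^{2,0} = \mathbb{Z}/2$. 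The filtration piece $F^1 H^2(M;\mathbb{Z}/2) = F^2 H^2(M;\mathbb{Z}/2) = E_\infty^{2,0}$ then coincides with $\mathrm{image}(f^*)$ via the edge homomorphism, while the standard identification of $p^*$ with the $q = 2$ edge shows $\ker(p^*|_{H^2}) = F^1 H^2(M;\mathbb{Z}/2)$. The main subtlety is this spectral sequence step, but the simple-connectivity of $\widetilde{M}$ makes the diagonal $p+q=2$ sparse enough that the computation is essentially forced; combining the two identifications then yields the desired equivalence.
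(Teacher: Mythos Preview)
Your argument is correct, but it takes a different route from the paper. The paper compares the UCT short exact sequences for $M$ and $\widetilde{M}$ directly: since $H_1(\widetilde{M})=0$, the bottom $\mathrm{Ext}$ term vanishes, and the crux is showing that the right-hand vertical map $\mathrm{Hom}(H_2(M),\mathbb{Z}/2)\to\mathrm{Hom}(H_2(\widetilde{M}),\mathbb{Z}/2)$ is injective, which follows from surjectivity of $H_2(\widetilde{M})\to H_2(M)$ via the five-term sequence $\pi_2(M)\to H_2(M)\to H_2(\mathbb{Z}/2)=0$. This is shorter and more elementary. Your approach instead identifies the $\mathrm{Ext}$ subgroup with $\mathrm{image}(f^*)$ using UCT naturality for the classifying map $f\colon M\to B\mathbb{Z}/2$, and then invokes the Serre spectral sequence of $\widetilde{M}\to M\to B\mathbb{Z}/2$ to get the exactness $H^2(B\mathbb{Z}/2;\mathbb{Z}/2)\xrightarrow{f^*} H^2(M;\mathbb{Z}/2)\xrightarrow{p^*} H^2(\widetilde{M};\mathbb{Z}/2)$ at the middle term. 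This is heavier machinery for the same conclusion, but it has the virtue of packaging the result as a clean three-term exact sequence and making transparent that the only input is $H^1(\widetilde{M};\mathbb{Z}/2)=0$; it would work verbatim for any covering with simply-connected total space, without needing to know that $H_2(\pi_1)=0$.
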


\begin{proof}
There is a commutative diagram
$$\xymatrix{0 \ar[r] &  \mathrm{Ext}(H_ 1(M), \mathbb Z/2) \ar[r] \ar[d] &  H^2(M;\mathbb Z/2) \ar[r] \ar[d]& \mathrm{Hom}(H_ 2(M), \mathbb Z/2) \ar[r] \ar[d]& 0 \\
            0 \ar[r] &  \mathrm{Ext}(H_ 1(\w{M}), \mathbb Z/2) \ar[r]  & H^2(\w{M};\mathbb Z/2) \ar[r]  & \mathrm{Hom}(H_ 2(\w{M}), \mathbb Z/2)
            \ar[r]
& 0.     }$$ Let $p\colon \w{M} \to M$ be the covering map, then
$T\w{M}=p^*TM$ and $w _2(\w{M}) =p^*w_ 2(M)$. By the exact sequence
$\pi_ 2(M) \to H_ 2(M) \to H_ 2(\z/2) \to 0$ and the fact $H_
2(\z/2)=0$ (cf. \cite{brown1}), it is seen that the map $H_ 2(\w{M})
\to H_ 2(M)$ is surjective, therefore the last vertical map in the
diagram $\mathrm{Hom}(H _2(M), \mathbb Z/2) \to \mathrm{Hom}(H_
2(\w{M}), \mathbb Z/2)$ is a monomorphism. Thus $w_ 2(\w{M})=0$ if
and only if $w_ 2(M) \in \mathrm{Ext}(H_ 1(M), \mathbb Z/2)$.
\end{proof}

\begin{remark} By this Lemma, the type II and type III
manifolds are manifolds having second Stiefel-Whitney class equal to
zero on homology. The existence of contact structures on these manifolds is shown in \cite{geiges-thomas1}.
\end{remark}

Recall that for a manifold $M^n$ with fundamental group $\z /2$, a
\emph{characteristic submanifold} $P^{n-1} \subset M$ is defined as follows (see \cite{lopez-de-medrano1} and \cite[\S 5]{geiges-thomas1}): there is a
decomposition $\w{M}=A \,\cup\, TA$ such that $\partial A =\partial (TA)=
\w{P}$, where $T$ is the deck-transformation. Then $P:= \w{P}/T$ is
called the characteristic submanifold of $M$. For example, if
$M=\rp^n$, then $P=\rp^{n-1}$. In general, let $f\colon M \to \rp^N$ ($N$
large) be the classifying map of the universal cover, transverse to
$\rp^{N-1}$, then $P$ can be taken as $f^{-1}(\rp^{n-1})$. By
equivariant surgery we may assume that $\pi_1(P) \cong \z/2$ and
that the inclusion $i\colon  P \subset M$ induces an isomorphism on
$\pi_1$. Different characteristic submanifolds of $M$ are bordant, where a bordism
is obtained from a homotopy between the relevant classifying maps. The above construction also holds in the topological
category by  topological transversality \cite{kirby-siebenmann1}.

In the smooth category, the division of the manifolds under
consideration into three $w_{2}$-types corresponds to different
$\Pin^{\dagger}$-structures on their characteristic submanifolds,
compare \cite[Lemma 9]{geiges-thomas1} for $\dagger = \pm$.

\begin{Lemma}\label{lem:TP}
Let $M$ be a smooth, orientable $5$-manifold with $\pi_1(M) \cong
\z/2$ and $H_ 2(M;\z)$ torsion-free. Let $P \subset M$ be a
characteristic submanifold (with $\pi_ 1(P)\cong \pi_{1}(M)$). Then
$TP$ admits a $\Pin^{\dagger}$-structure, where
$$\dagger = \left \{ \begin{array}{ll}
c & \textrm{if}\ \ M \ \ \textrm{is of type \textup I} \\
- & \textrm{if}\ \ M \ \ \textrm{is of type \textup{II}} \\
+ & \textrm{if}\ \ M \ \ \textrm{is of type \textup{III}} \\
\end{array} \right.$$
More precisely, if $M$ is of type \textup{II}, then a Spin-structure on $TM$
gives a $\Pin^-$-structure on $TP$; if $M$ is of type \textup{III}, then a
Spin-structure on $TM \oplus 2L$ gives a $\Pin^+$-structure on $TP$,
where $L$ is the nontrivial line bundle over $M$; if $M$ is of type
\textup{I}, then a Spin-structure on $TM \oplus \gamma$ gives a
$\Pin^c$-structure on $TP$, where $\gamma$ is a complex line bundle
over $M$ such that $c_1(\gamma) \equiv w_2(M) \pmod 2$.
\end{Lemma}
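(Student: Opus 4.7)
The plan is to exhibit $TM|_{P}$ as $TP \oplus \nu_{P}$ with the normal bundle $\nu_{P}$ explicitly identified, and then apply Lemma~\ref{lem:pin-spin} (together with the $\Pin^{c}$/$\Pin^{-}$ relation recalled just above) in each $w_{2}$-type. Because $P=f^{-1}(\rp^{N-1})$ for a classifying map $f\colon M\to \rp^{N}$ transverse to $\rp^{N-1}$, and the normal bundle of $\rp^{N-1}$ in $\rp^{N}$ is the tautological line bundle, we have $\nu_{P} = L|_{P}$, where $L$ is the nontrivial real line bundle over $M$. Thus $TM|_{P} = TP \oplus L|_{P}$, and since $M$ is orientable ($w_{1}(M)=0$) we obtain $w_{1}(TP) = w_{1}(L|_{P})$, whence $\det TP \cong L|_{P}$ as real line bundles.

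For type \textup{II} ($w_{2}(M) = 0$) the bundle $TM|_{P} = TP \oplus \det TP$ is Spin, and Lemma~\ref{lem:pin-spin} immediately converts this into the asserted $\Pin^{-}$-structure on $TP$. For type \textup{III} the first step is to identify $w_{2}(M) = w_{1}(L)^{2}$: in the Serre spectral sequence of $\w M \to M \to B\z/2$ the terms $E_{2}^{0,1}$ and $E_{2}^{1,1}$ vanish because $H^{1}(\w M;\z/2)=0$, so $\ker(p^{*}\colon H^{2}(M;\z/2)\to H^{2}(\w M;\z/2))$ equals the image of $H^{2}(B\z/2;\z/2)\to H^{2}(M;\z/2)$, which is generated by $w_{1}(L)^{2}$; since $w_{2}(\w M)=0$ and $w_{2}(M)\neq 0$, this forces $w_{2}(M)=w_{1}(L)^{2}$. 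A Whitney-sum computation using $w(2L) = 1 + w_{1}(L)^{2}$ then gives $w_{2}(TM \oplus 2L) = 0$, so $TM \oplus 2L$ is Spin; restricting to $P$ produces a Spin structure on $TP \oplus 3L|_{P} = TP \oplus 3\det TP$, which by Lemma~\ref{lem:pin-spin} is a $\Pin^{+}$-structure on $TP$.

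For type \textup{I}, the key preliminary is $W_{3}(M) = \beta(w_{2}(M)) = 0$: Poincaré duality combined with $H_{2}(M;\z)$ torsion-free gives $H^{3}(M;\z) \cong H_{2}(M;\z)$ torsion-free, so the $2$-torsion class $\beta(w_{2}(M))$ must vanish. Hence $w_{2}(M)$ lifts to an integral class, i.e.\ there is a complex line bundle $\gamma$ over $M$ with $c_{1}(\gamma) \equiv w_{2}(M) \pmod 2$, and the Whitney-sum formula gives $w_{2}(TM \oplus \gamma) = 0$. Restricted to $P$, the Spin bundle $(TM \oplus \gamma)|_{P} = TP \oplus L|_{P} \oplus \gamma|_{P}$ corresponds, via the $\Pin^{c}$/$\Pin^{-}$ relation recalled above, to a $\Pin^{c}$-structure on $TP$ once one verifies $c_{1}(\gamma|_{P}) \equiv w_{1}(TP)^{2} + w_{2}(TP) \pmod 2$; this identity is obtained by pulling back the Whitney formula for $w_{2}(TM|_{P})$ along $i\colon P\hookrightarrow M$ and using $w_{1}(TP) = w_{1}(L|_{P})$. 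The main obstacle is type~\textup{I}, both in proving $W_{3}(M)=0$ (where the torsion-free hypothesis is used essentially) and in the bookkeeping that identifies $\gamma|_{P}$ with the line bundle demanded by the $\Pin^{c}$/$\Pin^{-}$ correspondence; once the splitting $TM|_{P} = TP \oplus L|_{P}$ is in hand, types \textup{II} and \textup{III} follow cleanly from Lemma~\ref{lem:pin-spin}.
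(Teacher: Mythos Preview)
Your proof is correct and follows essentially the same route as the paper: identify the normal bundle of $P$ in $M$ with $\det TP$, then apply Lemma~\ref{lem:pin-spin} and the $\Pin^{c}$/$\Pin^{-}$ correspondence to the restriction of the relevant Spin bundle in each $w_{2}$-type. You supply more detail than the paper does---the explicit identification $\nu_{P}=L|_{P}$ via the transversal preimage description, the Serre spectral sequence argument for $w_{2}(M)=w_{1}(L)^{2}$ in type~III, and the vanishing of $W_{3}(M)$ in type~I---but these simply flesh out steps the paper leaves implicit.
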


\begin{proof}
Let $i\colon P \subset M$ be the inclusion and $\nu$ be the normal
bundle of this inclusion, then $TP \oplus \nu = i^*TM$.
If $M$ is of type II,  a Spin-structure on $TM$ induces a
Spin-structure on $TP\oplus \nu = TP \oplus \det TP$, therefore by
Lemma \ref{lem:pin-spin}, gives a $\Pin^-$-structure on $TP$.

If $M$ is of type III, then $TM \oplus 2L$ admits Spin-structures
and such a structure induces a Spin-structure on $TP \oplus 3 \det
TP$, henceforth a $\Pin^+$-structure on $TP$.

If $M$ is of type I, then $TP$ has neither $\Pin^-$ nor
$\Pin^+$-structures. Now $TM \oplus \gamma$ has Spin-structures.
Such a structure induces a Spin-structure on $TP \oplus \det TP
\oplus i^*\gamma$, and hence a $\Pin^-$-structure on $TP \oplus
i^*\gamma$. Since $c_1(i^*\gamma) \equiv i^*w_2(M) = w_1(P)^2 +
w_2(P) \pmod 2$, we obtain a $\Pin^c$-structure on $TP$.
\end{proof}

\begin{Lemma}\label{lem:Pbordant}
If $M$ is of type \textup{II} or \textup{III}, then different characteristic
submanifolds of $M$ with the $\Pin^{\pm}$-structures obtained by
\textup{Lemma \ref{lem:TP}} represent a pair of mutually inverse elements in
the corresponding bordism group $\Omega_4^{\Pin^{\pm}}$.
\end{Lemma}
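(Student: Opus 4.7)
The plan is to construct an explicit $\Pin^\pm$-bordism from $P$ to $P'$ by extending the characteristic-submanifold construction through a homotopy of classifying maps. Let $f, f'\colon M \to \rp^N$ be transverse classifying maps for the universal cover realizing $P$ and $P'$, and choose a smooth homotopy $F\colon M \times I \to \rp^N$ from $f$ to $f'$ that is transverse to $\rp^{N-1}$. Then $W := F^{-1}(\rp^{N-1})$ is a smooth $5$-submanifold of $M \times I$ with $\partial W = P \sqcup P'$.

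Next, I equip $W$ with a $\Pin^\pm$-structure by repeating the argument of Lemma \ref{lem:TP} one dimension up on the pair $(M \times I, W)$. The ambient $M \times I$ is orientable with $\pi_1 \cong \z/2$; the normal bundle $\nu_W$ is the pullback through $F|_W$ of the normal bundle of $\rp^{N-1} \subset \rp^N$, hence the restriction of the nontrivial real line bundle on $M \times I$, so $\nu_W \cong \det TW$ by comparing first Stiefel--Whitney classes. In type II, the Spin-structure on $TM$ extends uniquely to $T(M \times I) = TM \oplus \epsilon^1$ using the canonical framing of the $I$-direction; its restriction to $W$ is a Spin-structure on $TW \oplus \det TW$, which by Lemma \ref{lem:pin-spin} corresponds to a $\Pin^-$-structure on $TW$. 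The type III argument is parallel, starting from a Spin-structure on $T(M \times I) \oplus 2L$.

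The crucial verification is that the $\Pin^\pm$-structure inherited by each boundary component of $W$ coincides with the one assigned to $P$ and $P'$ directly by Lemma \ref{lem:TP}. At $P \subset M \times \{0\}$, the Spin-structure on $T(M \times I)$ restricted to $P$ can be factored in two ways, either as the Lemma \ref{lem:TP} input on $TP \oplus \det TP$ stabilized by the trivial $I$-direction, or as the stable restriction from $TW \oplus \det TW$ via $TW|_P = TP \oplus \epsilon^1_{\mathrm{out}}$ using the outward normal. Both factorizations yield the same Spin-structure on $TP \oplus \det TP \oplus \epsilon^1$, and by the naturality of the bijection in Lemma \ref{lem:pin-spin} produce the same $\Pin^-$-structure on $TP$. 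The same argument applies at $P'$, where the outward normal points in the opposite $I$-direction, but this only changes the framing of a trivial line factor and is absorbed by stabilization.

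With these identifications, $W$ is a $\Pin^\pm$-manifold whose boundary $P \sqcup P'$ carries the structures of Lemma \ref{lem:TP}, and therefore $[P] + [P'] = 0$ in $\Omega_4^{\Pin^\pm}$, the desired inverse relation. The chief obstacle is the boundary identification of the previous paragraph: because $\Pin^\pm$-bordism distinguishes a class from its inverse, one must carefully track that the opposite outward-normal directions at the two ends of $W$ do not introduce a sign discrepancy relative to Lemma \ref{lem:TP}, reducing to the unambiguity of stabilizing a Spin-structure along a trivial line bundle.
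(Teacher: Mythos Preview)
Your argument contains a genuine sign error and, once corrected, covers only half of what is needed. The claim that the opposite outward-normal directions at the two ends of $W$ are ``absorbed by stabilization'' is false: the choice of outward versus inward normal is exactly what distinguishes the incoming from the outgoing boundary convention in $\Pin^{\pm}$-bordism, and these give \emph{inverse} classes, not the same class. A quick sanity check exposes this: take the constant homotopy so that $W = P \times I$ and $P' = P$. Your argument would then yield $[P] + [P] = 0$ in $\Omega_4^{\Pin^{+}} \cong \z/16$, forcing every class to have order at most $2$, whereas $[\rp^4]$ has order $16$. What actually happens is that the boundary $\Pin^{\pm}$-structure at $t=0$ (outward normal $-\partial_t$) differs from the Lemma~\ref{lem:TP} structure by the action of $w_1(P)$, so your bordism $W$ proves $[P] = [P']$, not $[P] + [P'] = 0$.

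Once this is fixed, you have established only the first step of the paper's two-step proof: with a \emph{fixed} Spin-structure on $TM$ (resp.\ $TM \oplus 2L$), any two characteristic submanifolds are $\Pin^{\pm}$-bordant. The ``pair of mutually inverse elements'' in the statement arises from the \emph{second} step, which you do not address: there are two Spin-structures on $TM$ (since $H^1(M;\z/2)\cong\z/2$), and they induce $\Pin^{\pm}$-structures on a fixed $P$ differing by the action of $w_1(P)$. The paper then invokes the general fact from \cite[p.~190]{kirby-taylor2} that twisting a $\Pin^{\pm}$-structure by $w_1$ yields the additive inverse in bordism. Your write-up needs both pieces: the homotopy-of-classifying-maps bordism to show independence of the submanifold, and the $w_1$-twist argument to account for the Spin-structure ambiguity.
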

\begin{proof}
If we fix a Spin-structure on $TM$ (or $TM \oplus 2L$), then it's
clear that all different characteristic submanifolds with the
induced $\Pin^{\pm}$-structure are $\Pin^{\pm}$-bordant, for they
are transversal preimages of classifying maps of $\pi_1(M)$ and all
such maps are homotopic. Now we fix a characteristic submanifold
$P$, then the two $\Pin^{\pm}$-structures on $TP$ are related by the
action of $w_1(P)$, and it's a general fact that $P$ with such two
$\Pin^{\pm}$-structures give rise to a pair of mutually inverse
elements in the corresponding bordism group
\cite[p.190]{kirby-taylor2}.
\end{proof}

\section{Main Results}\label{sec:three}
Now we are ready to state the classification of the manifolds under consideration.

\begin{Theorem}\label{thm:one}
Two smooth, closed, orientable \abelian $5$-manifolds $M$ and $M'$,
with fundamental group $\z /2$ and torsion-free second homology
group, are diffeomorphic if and only if they have the same $w_2$-type,
$\rk H_ 2(M)=\rk H_ 2(M')$, and $[P]=[P'] \in \Omega_
4^{\Pin^{\dagger}} /\pm$, where $P$ and $P'$ are the characteristic
submanifolds and $\dagger = c, -, +$ for types \textup{I},
\textup{II}, \textup{III} respectively.
\end{Theorem}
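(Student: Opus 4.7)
The plan is to apply Kreck's modified surgery theory \cite{kreck3}, which is the natural framework for classifying these $5$-manifolds. Necessity of the invariants is almost immediate: the $w_2$-type and $\rk H_2(M)$ are manifestly diffeomorphism invariants, and by the construction recalled in Section \ref{two-type} together with Lemma \ref{lem:Pbordant}, any two characteristic submanifolds of $M$ (equipped with the $\Pin^{\dagger}$-structures of Lemma \ref{lem:TP}) represent the same class in $\Omega_4^{\Pin^{\dagger}}/\pm$. A diffeomorphism $M \to M'$ carries one characteristic submanifold to another, so $[P]=[P']$ in $\Omega_4^{\Pin^{\dagger}}/\pm$.

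For sufficiency, I would first identify, for each $w_2$-type $\dagger \in \{c,-,+\}$ and each integer $r \ge 0$, a normal $2$-type $\xi_{\dagger,r}\colon B_{\dagger,r} \to BSO$ in the sense of Kreck. Since $M$ is of \abelianB, $\pi_2(M) \cong \z^{r}$ carries the trivial $\z[\z/2]$-action (where $r = \rk H_2(M)$), and Lemma \ref{lem:TP} specifies the appropriate stable reduction of $\nu_M$; one can therefore take $B_{\dagger,r}$ to be a product of $B\z/2$, $K(\z,2)^{r}$, and a Spin-like factor: $B\Spin$ in case \textup{II}, $B\Spin$ pulled back along the classifying map of $2L$ in case \textup{III}, and $B\Spin^{c}$ in case \textup{I}. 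Both $M$ and $M'$ admit normal $B_{\dagger,r}$-smoothings whose classifying maps to $B_{\dagger,r}$ are $3$-equivalences, and the agreement of $\rk H_2$ is essential for the normal $2$-types to coincide.

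I would then compute $\Omega_5(B_{\dagger,r},\xi_{\dagger,r})$ via a Smith-type homomorphism sending a $B_{\dagger,r}$-bordism class $(N,f)$ to the preimage $f^{-1}(\rp^{N-1})$ with the $\Pin^{\dagger}$-structure induced by Lemma \ref{lem:TP}. Using $\Omega_5^{\Spin}=\Omega_5^{\Spin^{c}}=0$ together with the Atiyah--Hirzebruch spectral sequence, this map identifies the relevant summand of $\Omega_5(B_{\dagger,r},\xi_{\dagger,r})$ with $\Omega_4^{\Pin^{\dagger}}$, the residual sign ambiguity arising exactly from the $w_1(P)$-action that relabels $\Pin^{\dagger}$-structures. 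Hence the equality $[P]=[P']$ in $\Omega_4^{\Pin^{\dagger}}/\pm$ yields a $B_{\dagger,r}$-bordism $W^{6}$ between $(M,\bar\nu)$ and $(M',\bar\nu')$.

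Finally, Kreck's theorem reduces the proof of the diffeomorphism $M\cong M'$ to surgering $W$ below and at the middle dimension to an $s$-cobordism; the remaining obstruction lies in Kreck's monoid $l_{6}(\z[\z/2])$, and since $\Wh(\z/2)=0$ any resulting $s$-cobordism is a product, giving the diffeomorphism. The main obstacle will be the analysis of this $l_{6}$-obstruction for $\pi_{1}=\z/2$: one must verify that any obstruction produced can be killed by modifying $W$ via boundary connected sum with standard $B$-bordisms (built, for instance, from $S^{3}\times S^{3}$'s carrying a suitable $B$-structure) supported away from $\partial W$. This analysis, exploiting that $\pi_{2}(M)$ is trivial as a $\z[\z/2]$-module, is the technical crux of the argument and is the content I would expect to occupy Sections \ref{sec:four} and \ref{sec:five}.
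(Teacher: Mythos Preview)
Your overall architecture is the same as the paper's --- Kreck's modified surgery, normal $2$-types built from $\rp^\infty \times (\cp^\infty)^r \times B\Spin$, and the characteristic-submanifold map into $\Omega_4^{\Pin^\dagger}$ --- but you have misplaced the technical crux and left a genuine gap.

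The $l_6$-obstruction for $\pi_1=\z/2$ is \emph{not} the hard part: since $\Wh(\z/2)=0$ and $L_6^s(\z/2)\cong\z/2$ is detected by the Arf invariant, the obstruction is immediately killed by interior connected sum with a suitably framed $S^3\times S^3$ (this is the paper's Proposition~\ref{prop:one}). What you have skipped is the actual content of Sections~\ref{sec:four}--\ref{sec:five}: the bordism group $\Omega_5(B_{\dagger,r})$ is \emph{not} essentially $\Omega_4^{\Pin^\dagger}$. The $(\cp^\infty)^r$ factor contributes an additional $(\z/4)^r\oplus(\z/2)^{r(r-1)/2}$, detected by characteristic numbers of the form $\langle\alpha^3\cup\beta_i,f_*[X]\rangle$ and $\langle\alpha\cup\beta_i\cup\beta_j,f_*[X]\rangle$ (Propositions~\ref{prop:two}--\ref{prop:four}). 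These depend on the \emph{choice} of normal $2$-smoothing, i.e.\ on the chosen basis of the free part of $H^2(M;\z)$, and your proposal gives no mechanism for arranging $[M,\bar\nu]=[M',\bar\nu']$ in these extra summands. The paper's proof of Theorem~\ref{thm:one} does this by analyzing the bilinear form $\lambda(x,y)=\langle x\cup y\cup t,[M]\rangle$ on $H^2(M;\z/2)$, using the relations in Propositions~\ref{prop:two}--\ref{prop:four} to put $\lambda$ into a standard form (symplectic up to $t^2$), and then choosing the map to $(\cp^\infty)^r$ accordingly; the $\tau_i$-action handles the residual $\pm1$ ambiguity in each $\z/4$.

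A smaller point: Lemma~\ref{lem:Pbordant} as stated covers only types~II and~III, so your invocation of it for all three types in the ``necessity'' paragraph does not establish that $[P]\in\Omega_4^{\Pin^c}/\pm$ is a diffeomorphism invariant in type~I. The paper handles this instead by exhibiting the explicit list of type~I models and checking they are pairwise distinguished.
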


\begin{remark} It is known that
$\Omega_ 4^{\Pin^-}=0$ \cite{kirby-taylor2}. Therefore, 
$\rk H_ 2(M)$ is the only diffeomorphism invariant for the type II manifolds.
\end{remark}

There are topological versions of the central extensions mentioned
above and we have groups $\TopPin^{\dagger}(n)$, $\dagger \in \{c, +,
- \}$. For the preliminaries
on $\TopPin^{\dagger}(n)$ we refer to \cite{kirby-taylor2} and
\cite{hkt1}. Therefore we have corresponding results in the
topological category.

\begin{Lemma}
Let $M$ be a topological, orientable $5$-manifold with $\pi_ 1(M)
\cong \z/2$ and $H _2(M;\z)$ torsion-free. Let $P \subset M$ be a
characteristic submanifold (with $\pi_ 1(P)\cong \pi_{1}(M)$). Then $TP$
admits a $\TopPin^{\dagger}$-structure, where
$$\dagger = \left \{ \begin{array}{ll}
c & \textrm{if}\ \ M \ \ \textrm{is of type \textup I} \\
- & \textrm{if}\ \ M \ \ \textrm{is of type \textup{II}} \\
+ & \textrm{if}\ \ M \ \ \textrm{is of type \textup{III}} \\
\end{array} \right.$$
\end{Lemma}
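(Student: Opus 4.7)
The plan is to mimic the proof of the smooth version (Lemma \ref{lem:TP}) line by line, replacing every smooth bundle-theoretic ingredient with its topological counterpart. All the needed ingredients are already in place: the characteristic submanifold $P$ was constructed in Section \ref{two-type} via topological transversality \cite{kirby-siebenmann1}, and the topological $\TopPin^{\dagger}$ groups, together with the topological analogues of the classifying-space identifications and of Lemma \ref{lem:pin-spin}, are set up in \cite{kirby-taylor2} and \cite{hkt1}.

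First I would fix the normal bundle data. Let $i\colon P \hookrightarrow M$ be the inclusion. Since $P$ is a codimension-one submanifold of the orientable topological manifold $M$, the topological normal microbundle $\nu$ of $i$ exists, is a topological line bundle, and is isomorphic to $\det TP$ (the orientation line bundle of $P$). As stable topological microbundles one then has
\[TP \oplus \det TP \; \cong \; i^{*}TM.\]
This is the only geometric input beyond the smooth case.

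Next I would handle the three $w_2$-types in turn, exactly parallel to the proof of Lemma \ref{lem:TP}. For type \textup{II}, $w_2(M)=0$ implies that $TM$ admits a TopSpin-structure; restricting to $P$ gives a TopSpin-structure on $TP\oplus \det TP$, and the topological version of Lemma \ref{lem:pin-spin} converts this into a $\TopPin^-$-structure on $TP$. For type \textup{III}, the bundle $TM\oplus 2L$ is TopSpin (where $L$ is the nontrivial topological line bundle over $M$), which restricts to a TopSpin-structure on $TP\oplus 3\det TP$ and hence yields a $\TopPin^+$-structure on $TP$. For type \textup{I}, I would choose a complex line bundle $\gamma$ over $M$ with $c_1(\gamma)\equiv w_2(M)\pmod 2$; then $TM\oplus\gamma$ is TopSpin, the restriction to $P$ is a TopSpin-structure on $TP\oplus\det TP\oplus i^{*}\gamma$, giving a $\TopPin^-$-structure on $TP\oplus i^{*}\gamma$, and since $c_1(i^{*}\gamma)\equiv w_1(P)^2+w_2(P)\pmod 2$, the equivalence between $\Pin^c$ and $\Pin^-$ on $TP\oplus\gamma$ recalled in Section \ref{sec:two} (which is formal and holds topologically) promotes this to a $\TopPin^c$-structure on $TP$.

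The only place where anything genuinely new has to be checked is that each of the three classical identifications used above (existence of TopSpin-structures detected by $w_2$; the $\Pin^{\pm}$-Spin correspondence of Lemma \ref{lem:pin-spin}; the $\Pin^c$-$\Pin^-$ correspondence after tensoring with a complex line bundle) goes through for topological bundles with the $\TopPin^{\dagger}$-groups in place of $\Pin^{\dagger}$. This is the main expected obstacle, but it is not a serious one: these statements are all consequences of the corresponding central extension sequences $\z/2\to \TopPin^{\pm}\to \Top$ and $U(1)\to \TopPin^c\to \Top$, and the needed bijections are proved in this topological generality in \cite{kirby-taylor2} and \cite{hkt1}. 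Given these, no calculation beyond what was done in Lemma \ref{lem:TP} is required.
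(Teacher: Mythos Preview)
Your proposal is correct and matches the paper's approach exactly: the paper does not give an explicit proof of this lemma, but simply states it after remarking that there are topological versions of the central extensions $\TopPin^{\dagger}$ (with reference to \cite{kirby-taylor2} and \cite{hkt1}) and that ``therefore we have corresponding results in the topological category.'' Your line-by-line transcription of the proof of Lemma~\ref{lem:TP} with smooth structures replaced by their topological analogues is precisely what the paper leaves implicit.
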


\begin{Theorem}\label{thm:two}
Two topological, closed, orientable \abelian $5$-manifolds $M$ and
$M'$, with fundamental group $\z /2$ and torsion-free second homology
group, are homeomorphic if and only if they have the same $w_2$-type,
$\rk H _2(M)=\rk H_ 2(M')$ and $[P]=[P'] \in \Omega
_4^{\TopPin^{\dagger}} /\pm$, where $P$ and $P'$ are
characteristic submanifolds and $\dagger = c, -, +$ for type \textup
I, \textup{II}, \textup{III} respectively.
\end{Theorem}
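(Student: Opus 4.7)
The plan is to mirror the proof of Theorem~\ref{thm:one} step by step in the topological category, replacing smooth bundle theory and smooth transversality with their topological analogues. First I would establish necessity of the invariants. The rank of $H_2(M)$ is a homotopy invariant, and a characteristic submanifold $P \subset M$ is produced by topological transversality of Kirby-Siebenmann applied to a classifying map $M \to \rp^N$ of the non-trivial double cover. The formal derivation of Lemma~\ref{lem:TP} uses only that the stable topological tangent bundle of $M$ admits the appropriate topological Spin-, $\Spin^c$-, or $(\Spin \oplus 2L)$-structure according to the $w_2$-type, so it produces a $\TopPin^{\dagger}$-structure on $TP$. The proof of Lemma~\ref{lem:Pbordant} carries over verbatim because the action of $w_1(P)$ on $\TopPin^{\dagger}$-structures satisfies the same formal relation as in the smooth case. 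Hence $[P] \in \Omega_4^{\TopPin^{\dagger}}/\pm$ is a well-defined homeomorphism invariant.

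For sufficiency I would apply Kreck's modified surgery theory in its topological version. In each of the three $w_2$-cases the normal $1$-type of $M$ is a fibration $\xi \colon B \to B\Top$ built from $B(\z/2)$ together with the appropriate topological Spin-type factor (so a normal $1$-smoothing encodes precisely a $\TopPin^{\dagger}$-structure on a characteristic submanifold after one-dimensional descent). Because $M$ is of \abelianB, a normal $1$-smoothing exists and Kreck's theorem states that two manifolds with equal invariants are homeomorphic provided they admit a $B$-bordism on which the topological surgery obstruction in the appropriate $L$-monoid vanishes.

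I would then identify $\Omega_5(B,\xi)$ with the pair $\bigl(\rk H_2, [P] \in \Omega_4^{\TopPin^{\dagger}}/\pm\bigr)$ by the Pontrjagin-Thom construction: the characteristic submanifold is realized as the transverse preimage of $\rp^\infty \subset B$ along the normal $1$-smoothing, and the rank appears through the AHSS contribution $H_1(B\z/2; \Omega_4^{\TopPin^{\dagger}})$ versus $H_5(B\z/2;\z)$. This matches the smooth computation performed in Section~\ref{sec:four}, with $\Omega_4^{\Pin^{\dagger}}$ replaced by $\Omega_4^{\TopPin^{\dagger}}$. For $\pi_1 = \z/2$ the relevant $L$-group computations agree in $\Top$ and $\Diff$, and the topological $s$-cobordism theorem of Freedman-Quinn in dimension $5$ with fundamental group $\z/2$ converts the resulting $s$-cobordism into a homeomorphism.

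The main obstacle I expect is verifying that the Pontrjagin-Thom identification of $\Omega_5(B,\xi)$ with the two listed invariants really does go through in the topological setting. Concretely, one must compute $\Omega_*^{\TopPin^{\dagger}}$ through total degree $5$ via the Atiyah-Hirzebruch spectral sequence and confirm that the Kirby-Siebenmann class does not produce any extra bordism invariant on top of $\rk H_2$ and $[P]$; the fact that $P$ is only $4$-dimensional, so that $KS$ enters only through the ambient $5$-manifold, should prevent such contamination, but this deserves careful bookkeeping. Once this spectral sequence check is in place, the remainder of the argument is a routine transcription of the smooth proof.
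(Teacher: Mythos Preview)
Your overall strategy---mirror the smooth proof in the topological category---is exactly what the paper does, but your execution has a genuine gap: you propose to use the \emph{normal $1$-type} of $M$, i.e.\ a fibration $B \to B\Top$ built only from $B(\z/2)$ and the appropriate $\TopSpin$-type factor. In Kreck's theory for a $(2m-1)$-manifold one needs normal $(m-1)$-smoothings; for $5$-manifolds this means normal \emph{$2$}-smoothings, and the relevant $B$ is the normal $2$-type. Concretely, the paper's $B$ contains an extra $(\cp^{\infty})^r$ factor recording $\pi_2(M)$, and this is essential: without it the lift $\bar\nu$ is not a $3$-equivalence, Proposition~\ref{prop:one} does not apply, and you cannot convert a $B$-bordism to an $s$-cobordism. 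Relatedly, your sentence about identifying $\Omega_5(B,\xi)$ with the pair $(\rk H_2,[P])$ is not quite right: $\rk H_2$ is not extracted from the bordism group, it \emph{selects} which $B$ (hence which bordism group) to use.

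Once you pass to the correct normal $2$-type, the bordism group $\Omega_5(B,p)$ is not just $\Omega_4^{\TopPin^\dagger}$: as in Propositions~\ref{prop:two}--\ref{prop:four} it also has $(\z/4)^r \oplus (\z/2)^{r(r-1)/2}$ summands coming from the $(\cp^\infty)^r$ factor. The heart of the proof of Theorem~\ref{thm:one}---and hence of Theorem~\ref{thm:two}---is the linear-algebra argument (Lemma~\ref{lemma:basis} and the basis choices in the proof in Section~\ref{sec:five}) showing that for any two $M$, $M'$ with the same listed invariants one can \emph{choose} normal $2$-smoothings so that all of these extra components agree. Your proposal omits this step entirely. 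Finally, two small corrections: the Kirby--Siebenmann invariant enters precisely through the $4$-manifold $P$ (it is one of the invariants in the table for $\Omega_4^{\TopPin^\dagger}$), not through the ambient $5$-manifold; and the $s$-cobordism you end up with is $6$-dimensional, so you need the classical topological $s$-cobordism theorem, not the Freedman--Quinn $5$-dimensional one.
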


\smallskip
The groups $\Omega _4^{\Pin^{\pm}}$ and $\Omega_ 4^{\TopPin^{\pm}}$
are computed in \cite{kirby-taylor2}. $\Omega_ 4^{\TopPin^c}$ is
computed in \cite[p.654]{hkt1}. (Note that the r\^ole of $\Pin^+$ and
$\Pin^-$ in \cite{hkt1} are reversed since in that paper the authors
consider normal structures whereas here we use the convention in
\cite{kirby-taylor2}, looking at the tangential Gauss-map.) In a
similar way we will compute $\Omega_ 4^{\Pin^c}$ below. We list the
values of these groups:

\smallskip

\begin{center}
\begin{tabular}{|c|c|c|c|}
\hline
\vsp
       $\dagger$ & $\Omega_ 4^{\Pin^{\dagger}}$ & invariants     &  generators \\[0.5ex]
\hline
\vsp
 $c$      & $\z /8 \oplus \z /2 $      & (arf, $w_ 2^2$) & $\rp ^4$, $\cp ^2$ \\[.5ex]
       $+$       & $\z /16$                   & ?           & $\rp ^4$ \\[.5ex]
       $-$       & $0$                        &  --           & -- \\[.5ex]
\hline
\vsp
       $\dagger$ & $\Omega_ 4^{\TopPin^{\dagger}}$ & invariants & generators \\[0.5ex]
\hline
\vsp
 $c$      & $\z /2 \oplus \z /8 \oplus \z /2$ & ($KS$, arf, $w_ 2^2$) & $E_8$, $\rp ^4$, $\cp ^2$ \\[.5ex]
       $+$       & $\z /2 \oplus \z /8$             & ($KS$, arf) & $E_8$, $\rp ^4$ \\[.5ex]
       $-$       & $\z /2$ &   $KS$                    &  $E_8$\\[.5ex]
\hline
\end{tabular}
\end{center}

\medskip

\noindent Computation of $\Omega_ 4^{\Pin^c}$: the extension
$$1\to \Pin^- \to \Pin^c \to U(1) \to 1$$
induces Gysin-sequence (compare \cite[p.654]{hkt1})
$$\cdots \to \Omega _4^{\Pin^-} \to \Omega_ 4^{\Pin^c} \stackrel{\cap\,
c}{\longrightarrow} \Omega_ 2^{\Pin^-}(BU(1)) \to \Omega_ 3^{\Pin^-}
\to \cdots .$$ Since $\Omega_ 4^{\Pin^-}=\Omega_ 3^{\Pin^-}=0$ (see
\cite{kirby-taylor2}), we have an isomorphism $$\Omega_ 4^{\Pin^c}
\stackrel{\cap\, c}{\longrightarrow} \Omega_ 2^{\Pin^-}(BU(1))$$ and
the latter group is the same as $\Omega_ 2^{\TopPin^-}(BU(1))$,
which is computed in \cite{hkt1}.
The invariants in Theorem \ref{thm:one} are subject to certain
relations.
\begin{definition}\label{def:notation}
 Denote $r=\rk H_ 2(M)$, $q=[P] \in \Omega_ 4^{\Pin^+}
/\pm =\{0,1, \dots , 8 \}$ and $(q,s)=[P] \in \Omega_ 4^{\Pin^c} /
\pm =\{0,1, \dots , 4 \} \times \{0,1 \}$. 
\end{definition}

As an application of the
semi-characteristic class \cite{lee1}, we have
\begin{Theorem}\label{thm:three}
Let $M$ be a smooth, orientable $5$-manifold with $\pi_ 1(M) \cong \z
/2$ and torsion-free $H_ 2(M)$, having the invariants as above. Then
these invariants subject to the following relations
\medskip

\begin{center}
\begin{tabular}{|c|c|}
\hline
type  &  relation \\[.3ex]
\hline \vsp
 \textup{I} & $q+s+r \equiv 1$ $\pmod{2}$ \\[.3ex]
       \textup{II} & $r \equiv 1$ $\pmod{2}$ \\[.3ex]
       \textup{III} & $q+r \equiv 1$ $\pmod{2}$\\[.3ex]
       \hline
\end{tabular}
\end{center}
\end{Theorem}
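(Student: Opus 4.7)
The plan is to compute the mod-$2$ semi-characteristic
$\chi_{1/2}(M;\mathbb{F}_{2}) := \sum_{i=0}^{2}\dim_{\mathbb{F}_{2}} H_{i}(M;\mathbb{F}_{2}) \pmod{2}$
in two independent ways and equate the results.

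First, a direct homological calculation: since $\pi_{1}(M) \cong \z/2$ forces $H_{1}(M;\z) = \z/2$, one has $\dim_{\mathbb{F}_{2}} H_{0} = \dim_{\mathbb{F}_{2}} H_{1} = 1$, while the universal coefficient theorem, combined with the torsion-freeness of $H_{2}(M;\z)$, gives $\dim_{\mathbb{F}_{2}} H_{2}(M;\mathbb{F}_{2}) = r + 1$, where the extra $1$ comes from the $\mathrm{Ext}(H_{1}(M),\mathbb{F}_{2})$ summand. Hence $\chi_{1/2}(M;\mathbb{F}_{2}) \equiv r + 1 \pmod{2}$.

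Second, I would invoke Lee's semi-characteristic theorem \cite{lee1}, which asserts that $\chi_{1/2}(-;\mathbb{F}_{2})$ is a bordism invariant on closed $5$-manifolds carrying the tangential $\Pin^{\dagger}$-structure of Lemma \ref{lem:TP} dictated by the $w_{2}$-type. Passing to the characteristic submanifold via transversality to $\rp^{N-1} \subset \rp^{N}$, and invoking Lemma \ref{lem:Pbordant} for well-definedness up to sign, one obtains a homomorphism
$\varphi_{\dagger} \colon \Omega_{4}^{\Pin^{\dagger}}/\pm \to \z/2,$
with $\varphi_{\dagger}([P]) = \chi_{1/2}(M;\mathbb{F}_{2})$.

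It then suffices to evaluate $\varphi_{\dagger}$ on the generators tabulated in Section \ref{sec:three}. For $\dagger = -$, $\Omega_{4}^{\Pin^{-}} = 0$ forces $\varphi_{-} \equiv 0$, so $r + 1 \equiv 0$, yielding the type II relation. For $\dagger = +$, testing $M = \rp^{5}$ (whose characteristic submanifold $\rp^{4}$ represents the generator $q = 1$ of $\z/16$, and for which $r = 0$ and $\chi_{1/2}(\rp^{5};\mathbb{F}_{2}) = 1$) forces $\varphi_{+}(q) \equiv q \pmod 2$, yielding the type III relation. For $\dagger = c$, the generators $\rp^{4}$ and $\cp^{2}$ of $\Omega_{4}^{\Pin^{c}} = \z/8 \oplus \z/2$ are handled separately: the first is again captured by $\rp^{5}$, and once the second is shown to give $\varphi_{c}(0,1) = 1$, one concludes $\varphi_{c}(q,s) \equiv q + s \pmod{2}$ and hence the type I relation.

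The main obstacle is the $\Pin^{c}$ case: Lee's semi-characteristic framework must be adapted to the tangential $\Pin^{c}$-setting, which is less standard than the $\Pin^{\pm}$-cases treated in \cite{kirby-taylor2}, and a concrete type-I $5$-manifold whose characteristic submanifold represents $(0,1) \in \Omega_{4}^{\Pin^{c}}$ must be exhibited, with its rank $r$ computed. Natural candidates are twisted circle bundles over simply-connected non-Spin $4$-manifolds, or handlebody constructions combining $\cp^{2}$ with a suitable nullbordism of $\rp^{4}$.
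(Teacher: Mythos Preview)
Your overall strategy matches the paper's: both arguments hinge on computing the semi-characteristic in two ways and equating the results. The direct homological computation giving $r+1 \pmod 2$ is the same. The divergence is in the second computation, and here your proposal has a genuine gap.

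The paper works with Lee's rational semi-characteristic of the \emph{universal cover}, $\hat\chi_{1/2}(\widetilde M;\mathbb Q)$, which by \cite{lee1} is a homomorphism $\Omega_5(\z/2)\to\z/2$ (oriented bordism of free $\z/2$-actions). Your $\mathbb F_2$-semi-characteristic of $M$ itself happens to give the same value $r+1$ here, but it is a different object and Lee's theorem says nothing about it. More importantly, Lee's result does \emph{not} assert any $\Pin^\dagger$-bordism invariance: $M$ carries no $\Pin^\dagger$-structure (its characteristic submanifold $P$ does), and there is no direct map from $\Omega_5(\z/2)$ to $\Omega_4^{\Pin^\dagger}$, since the characteristic submanifold of a generic oriented free $\z/2$-manifold has no $\Pin^\dagger$-structure without the refined Spin-type input on $M$. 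Your homomorphism $\varphi_\dagger$ is therefore not a priori well-defined; to salvage it you would need the isomorphisms $\Omega_5^{\Spin}(\rp^\infty)\cong\Omega_4^{\Pin^-}$, $\Omega_5^{\Spin}(\rp^\infty;2\eta)\cong\Omega_4^{\Pin^+}$, $\Omega_5^{\Spin}(\rp^\infty\times\cp^\infty;p_2^*\gamma)\cong\Omega_4^{\Pin^c}$ established in Section~\ref{sec:four}, and then argue that $\chi_{1/2}$ factors through the forgetful map to $\Omega_5(\z/2)$.

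The paper sidesteps this by invoking the Davis--Milgram characteristic class formula \cite[Theorem~C]{davis-milgram2}
\[
\chi_{1/2}(\widetilde M;\mathbb Q)=\langle w_4(M)\cup t,[M]\rangle,
\]
then computes $w_4=Sq^2 v_2$ from the Wu class $v_2=w_2(M)$ case by case: $w_4=0$ (type~II), $w_4=t^4$ (type~III), $w_4=\bar\nu^*w_2(\gamma)^2$ (type~I). For type~III the number $\langle t^5,[M]\rangle$ is identified with $q\bmod 2$ via the Atiyah--Hirzebruch filtration on $\Omega_5^{\Spin}(\rp^\infty;2\eta)$; for type~I the number $\langle\alpha\cup\beta^2,\bar\nu_*[M]\rangle$ is matched to $q+s\bmod 2$ by checking the explicit generators $\rp^5\scs(S^2\times\rp^3)$ and $\rp^5\scs(\cp^2\times S^1)$. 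This last step furnishes exactly the concrete type~I example you were looking for and resolves your ``main obstacle'' without any adaptation of Lee's framework to the $\Pin^c$ setting.
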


\smallskip

Now we give a list of all the manifolds under consideration,
realizing the possible invariants. We need some preliminaries.

By a computation of the surgery exact sequence, it is shown in
\cite{wall-book} that in the smooth (or PL) category, there are $4$
distinct diffeomorphism types of manifolds which are homotopy
equivalent to $\rp ^5$, these are called fake $\rp ^5$. An explicit
construction using links of singularities (Brieskorn spheres) can be
found in \cite{geiges-thomas1}. Following the notations there, we
denote these fake $\rp ^5$ by $\XX{q}$, $q=1,3,5,7$, with $\XX{1}
=\rp^5$. These manifolds fall into the class of manifolds under
consideration. They are of type III and the $\Pin^+$-bordism class
of the corresponding characteristic submanifold is $q \in \Omega_
4^{\Pin^+} /\pm = \{0,1, \dots , 8 \}$, see \cite{geiges-thomas1}. In
our list of standard forms these fake projective spaces will serve
as building blocks under the operation $\scs$---``connected-sum
along $S^1$'', which we explain now, compare \cite{hkt1}.

\begin{construct}
Let $M _i$ $(i=1,2)$ be oriented $5$-manifolds with fundamental group
$\z /2$ or $\z$, and at least one of the fundamental groups is $\z
/2$. Denote the trivial oriented $4$-dimensional real disc bundle over $S^1$
by $E$. Choose embeddings of $E$ into $M_ 1$ and $M_2$,
representing a
 generator of $\pi _1 (M_ i)$, such that the first embedding preserves
  the orientation and the second reverses it. Then we define
$$M _1\scs M _2 := (M_ 1-E)\cup _{\partial}\, (M_ 2-E).$$
Note that if one of the $5$-manifolds admits an
orientation reversing automorphism, then the construction doesn't
depend on the orientations, and this is the case for the building
blocks in the list below, namely, $S^2 \times \rp ^3$, $S^2 \times
S^2 \times S^1$, $\XX{q}$ and $\cp ^2 \times S^1$ admit orientation
reversing automorphisms.  (The fact that $\XX{q}$ admits orientation
reversing automorphisms follows from that $\rp ^5$ admits
orientation reversing automorphisms and that the action of
$Aut(\rp^5)$ on the structure set $\mathscr{S}(\rp^5)$ is trivial.)

The Seifert-van Kampen theorem implies that $\pi_ 1(M_ 1 \scs M_ 2) \cong \z
/2$. The Mayer-Vietoris exact sequence implies that $H_ 2(M_ 1 \scs
M_ 2)$ is torsion-free, and hence $M_ 1 \scs M_ 2$ is of fibered type.
The homology  $\rk H _2(M_ 1 \scs M_ 2) = \rk H_ 2(M_ 1) +
\rk H_ 2(M_ 2) +1$ if both fundamental groups are $\z/2$, and $\rk H_
2(M_ 1\scs M_ 2) =\rk H_ 2(M_ 1)  + \rk H_ 2(M_ 2)$ if one
of the fundamental groups is $\z$.

Since $\pi_ 1 SO(4) \cong \mathbb Z/2$, there are actually two
possibilities to form $M_ 1 \scs M_ 2$. However, from the
classification result, it turns out that this ambiguity happens only
when we construct $\XX{q}\scs\XX{q'}$. This does depend on
the framings, and therefore $\XX{q}\scs \XX{q'}$ represents two
manifolds. Note that the characteristic submanifold of
$M_ 1\scs M_ 2$ is $P_ 1\scs P_ 2$ (see \cite[p.651]{hkt1}
for the definition of $\scs$ for nonorientable $4$-manifolds with
fundamental group $\z /2$). Therefore if we fix $\Pin^+$-structures
on each of the characteristic submanifolds, then
$\XX{q}\scs \XX{q'}$ is well-defined.
\end{construct}

This construction allows us to construct manifolds with a given
bordism class of characteristic submanifold. Note that $P _1\scs P_
2$ corresponds to the addition in the bordism group $\Omega_
4^{\Pin^{\dagger}}$. Now for $q=0,2,4,6,8$, choose $l,l' \in
\{1,3,5,7\}$ and appropriate $\Pin^+$-structures on the
characteristic submanifolds of $\XX{l}$ and $\XX{l'}$, we can form a
manifold $\XX{l} \scs \XX {l'}$ such that the characteristic
submanifold $[P]=q \in \Omega_ 4^{\Pin^+} /\pm$. We denote this
manifold also by $\XX{q}$. For example, we can form $\XX{0}=\XX{1}
\scs \XX{1}$ and $\XX{2}=\XX{1} \scs \XX{1}$ with different glueing
maps.

With these notations, the list of standard forms of the manifolds
under consideration is given as follows:

\begin{Theorem}\label{thm:four}
Every closed smooth orientable \abelian $5$-manifold with fundamental
group $\z /2$ and second homology group $\z ^r$ is diffeomorphic to
exactly one of the following standard forms:

\

type \textup{I :} $\XX{q} \scs (S^2 \times \rp ^3) \scs ((\sharp_k\,
S^2 \times S^2)\times S^1)$, $r=2k+(5+(-1)^q)/2$, $q \in \{0, \dots,
4\}$;

\

\hspace{1.2cm} $\XX{q} \scs (\cp ^2 \times S^1) \scs ((\sharp_k\,
S^2 \times S^2)\times S^1)$, $r=2k+(3+(-1)^q)/2$, $q \in \{0, \dots,
4\}$;

\

type \textup{II :} $(S^2 \times \mathbb R \mathrm P^3) \scs
((\sharp_k\, S^2 \times S^2)\times S^1)$, $r=2k+1$;

\

type \textup{III :} $\XX{q}\scs ((\sharp_k\,  S^2 \times S^2)\times
S^1)$,
 $r=2k+(1+(-1)^q)/2$, $q \in
\{0, \dots, 8\}$.

\

\noindent Where $\sharp_k\, S^2 \times S^2$ is the connected sum of $k$
copies of $S^2 \times S^2$.
\end{Theorem}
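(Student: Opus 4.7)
The strategy is to apply Theorem \ref{thm:one}: once every manifold in the list is shown to be a smooth closed orientable fibered type $5$-manifold with $\pi_1 \cong \z/2$ and torsion-free $H_2$, and the triples of invariants $(w_2\text{-type}, r, [P])$ realized by the list cover every combination actually occurring for such a manifold, Theorem \ref{thm:one} yields both the existence of a standard form and its uniqueness. Verification of the basic hypotheses for each $\scs$ sum is routine: Seifert-van Kampen gives $\pi_1 \cong \z/2$, Mayer-Vietoris gives torsion-freeness of $H_2$ together with the rank formula already recalled in the description of $\scs$, and the fibered type property is preserved under $\scs$ because the $\z/2$ deck action on $\pi_2$ of each building block's universal cover is trivial.

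The heart of the argument is the invariant computation. For the building blocks: $S^2 \times \rp^3$ is spin (so of type II) with $\rk H_2 = 1$ and $[P] = 0 \in \Omega_4^{\Pin^-} = 0$; the manifold $(\sharp_k\, S^2 \times S^2) \times S^1$ has $\pi_1 \cong \z$, is spin, and contributes $\rk H_2 = 2k$ with trivial $[P]$; the fake projective space $\XX{q}$ is of type III (since $w_2(\rp^5) \ne 0$ while $S^5$ is spin), with $\rk H_2 = 0$ for odd $q$ and $\rk H_2 = 1$ for even $q$ (the latter constructed as $\XX{l}\scs\XX{l'}$ with different framings), and its characteristic submanifold realizes $q \in \Omega_4^{\Pin^+}/\pm$ by \cite{geiges-thomas1}; $\cp^2 \times S^1$ contributes the $\cp^2$ summand of the characteristic submanifold, which supplies the $w_2^2 = 1$ generator of $\Omega_4^{\Pin^c}/\pm$. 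Combining these with the additivity $[P_1 \scs P_2] = [P_1] + [P_2]$ in $\Omega_4^{\Pin^\dagger}/\pm$ and the rank formula, the listed standard forms enumerate: in type II all odd $r$; in type III all $(q, k)$ with $q \in \{0, \ldots, 8\}$ and $q + r$ odd; and in type I all $(q, s, k)$ with $q \in \{0, \ldots, 4\}$, $s \in \{0, 1\}$ and $q + s + r$ odd, splitting by $s = 0$ (using $S^2 \times \rp^3$) or $s = 1$ (using $\cp^2 \times S^1$), which matches exactly the admissible combinations cut out by Theorem \ref{thm:three} together with the fibered type/torsion-free $H_2$ constraints.

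The main obstacle is verifying that the $w_2$-type of each $\scs$ sum is as asserted, and in particular that $\XX{q} \scs (S^2 \times \rp^3) \scs \cdots$ and $\XX{q} \scs (\cp^2 \times S^1) \scs \cdots$ are genuinely of type I rather than type II or type III. This reduces to a Mayer-Vietoris calculation of $w_2(M)$ and $w_2(\widetilde M)$ in $\z/2$-cohomology combined with a careful accounting of the $\pi_1(SO(4)) = \z/2$ framing ambiguity in the gluing: one must select the framing that propagates $w_2 \ne 0$ from $\XX{q}$ and the non-spin character of the $\cp^2$ or $\rp^3$ summand into both $M$ and its universal cover. Once the $w_2$-type, rank, and Pin-bordism class of each listed form are confirmed and matched against every admissible combination, Theorem \ref{thm:one} converts this matching into the diffeomorphism classification asserted in Theorem \ref{thm:four}.
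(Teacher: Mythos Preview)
Your overall strategy matches the paper's proof: check that each listed manifold lies in the target class, compute its invariants (the $w_2$-type, $r$, and $[P]\in\Omega_4^{\Pin^\dagger}/\pm$) using additivity of the characteristic submanifold under $\scs$, verify these exhaust the combinations permitted by Theorem~\ref{thm:three}, and invoke Theorem~\ref{thm:one}. The paper carries this out more tersely, leaving the $w_2$-type as implicit in the $\Pin^\dagger$-structure of the characteristic submanifold.

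There is, however, a genuine confusion in your final paragraph. You claim that for $\XX{q}\scs(S^2\times\rp^3)$ one must ``select the framing that propagates $w_2\ne 0$ \ldots\ into \ldots\ its universal cover'' and you invoke ``the non-spin character of the \ldots\ $\rp^3$ summand''. Both assertions are wrong. First, $S^2\times\rp^3$ is spin (each factor is), so there is no non-spin character to propagate. Second, the paper states explicitly that the $\pi_1(SO(4))$ framing ambiguity affects the diffeomorphism type only for $\XX{q}\scs\XX{q'}$; and in any case the degree-$2$ cover $S^1\to S^1$ kills any element of $\pi_1(SO(4))=\z/2$, so the downstairs framing choice never changes the universal cover. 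Neither mechanism you propose can produce $w_2(\widetilde M)\ne 0$.

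The correct route---and the one implicit in the paper's layout---is to read the $w_2$-type from the characteristic submanifold via the contrapositive of Lemma~\ref{lem:TP}: if $P$ admits neither a $\Pin^+$- nor a $\Pin^-$-structure then $M$ must be of type~I. For $\XX{q}\scs(S^2\times\rp^3)$ one has $P=P_q\scs(S^2\times\rp^2)$; the piece $P_q$ has $w_2=0$ and $w_1^2\ne 0$ (it is $\Pin^+$), while $S^2\times\rp^2$ has $w_2=w_1^2\ne 0$ (it is $\Pin^-$). A Mayer--Vietoris check then shows that on the glued $P$ both $w_2$ and $w_2+w_1^2$ restrict nontrivially, so $P$ is neither $\Pin^+$ nor $\Pin^-$, forcing type~I. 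The $\cp^2\times S^1$ family is more direct, since $\cp^2$ is already non-spin and survives visibly into the universal cover.
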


\begin{remark} There can be other descriptions of the
manifolds in the list. For example, we have a (more symmetric)
description of the type II standard forms
$$\underbrace{(S^2 \times
\mathbb R \mathrm P^3) \scs  \cdots  \scs  (S^2 \times
\mathbb R \mathrm P^3)} _{k \ \ \mathrm{times}}.$$
\end{remark}

\begin{remark}
Note that the universal covers of the manifolds under consideration
have torsion-free second homology, therefore, according to the
results of Smale and Barden, are diffeomorphic to $\sharp_r(S^2
\times S^3)$ or $B\sharp_{r-1}(S^2 \times S^3)$, where $B$ is the
nontrivial $S^3$-bundle over $S^2$. From this point of view, Theorem
\ref{thm:four} gives the classification of orientation preserving
free involutions on $\sharp_r(S^2 \times S^3)$ and
$B\sharp_{r-1}(S^2 \times S^3)$, which act trivially on $H_2$. For
example, consider the orientation preserving free involution on $S^2
\times S^3$ given by $(x,y) \mapsto (r(x), -y)$, where $r \colon S^2
\to S^2$ is the reflection along a line and $- \colon S^3 \to S^3$
is the antipodal map. Then the quotient space is actually the sphere
bundle of the nontrivial orientable $\mathbb R^3$-bundle over
$\rp^3$. From Theorem \ref{thm:one} it is easy to see that this is
just $X^5(0)$.
\end{remark}

\begin{remark} The above list may be of use in the study of
geometric structures on these manifolds. Geiges and Thomas
\cite{geiges-thomas1} show
that the type II and type III manifolds admit contact structures. On
the other hand, a necessary condition for the existence of contact
structures on $M^{2n+1}$ is the reduction of the structure group of
$TM$ to $U(n)$, hence the vanishing of integral Stiefel-Whitney
classes $W_{2i+1}(M)$. It is easy to see that the type I manifolds
satisfy this necessary condition. These manifolds also satisfy the
necessary conditions on the cup length and Betti numbers in
 \cite{ boyer-galicki1} for the existence of Sasakian structures.
Therefore it would be interesting to study these geometric
structures on these manifolds.
\end{remark}

\begin{proof}[The proof of Theorem  \ref{thm:four}]
By the Van-Kampen theorem and the Mayer-Vietoris sequence it is easy
to see that all the manifolds in the list are orientable, with
fundamental group $\z/2$ and torsion-free $H_ 2$, and the
$\pi_ 1$-action on $H_ 2$ is trivial. Therefore we only need to verify that these
manifolds have different invariants and realize all the possible
invariants.

\smallskip
\noindent
\underline{Type II:} $\rk H_ 2((S^2 \times \mathbb R \mathrm P^3)
\scs ((\sharp_k\,  S^2 \times S^2)\times S^1))=2k+1$.

\smallskip
\noindent
 \underline{Type III:} the characteristic submanifold of
$\XX{q}\scs ((\sharp_k\,  S^2 \times S^2)\times S^1)$ is just that
of $\XX{q}$, which corresponds to $q \in \Omega_ 4^{\Pin^+}/\pm
=\{0, \cdots, 8 \}$.

\smallskip
\noindent
\underline{Type I:}  similarly, the manifold  $\XX{q} \scs (\cp ^2 \times S^1) \scs ((\sharp_k\,
S^2 \times S^2)\times S^1)$ has characteristic
submanifold invariant $(q,1) \in \Omega_ 4^{\Pin^c}/\pm$.
\end{proof}

To give a list of standard forms of the manifolds under
consideration in the topological case, we need a topological
$5$-manifold which is homotopy equivalent to $S^2 \times \rp^3$ and
whose characteristic submanifold represents the nontrivial element
in $\Omega_4^{\TopPin^-}=\z/2$. Note that by Theorem \ref{thm:two},
if such manifolds exist, then the homeomorphism type is unique.
Following the notation in \cite{hkt1}, we denote this manifold by
$*(S^2 \times \rp^3)$. We now give the construction of $*(S^2 \times
\rp^3)$.

Let $W=S^2 \times \rp^3 \scs E_8 \times S^1$, so that
$\pi_1(W) =\z/2$ and the characteristic submanifold of $W$ is $S^2
\times \rp^2 \Sharp E_8$. Let $h \colon W \to S^2 \times \rp^3$ be a
degree $1$ normal map which extends the degree $1$ normal map $f
\colon S^2 \times \rp^2 \Sharp E_8 \to S^2 \times \rp^2$. Then by
doing codimension $1$ surgery on $h$ we obtain a $W'$ with
characteristic submanifold $P=*(S^2 \times \rp^2)$ and a degree $1$
normal map $h' \colon W' \to S^2 \times \rp^3$ extending a homotopy
equivalence $f' \colon
*(S^2 \times \rp^2) \to S^2 \times \rp^2$ (cf. \cite{hkt1} for the construction of $*(S^2 \times \rp^2)$). The $\pi$-$\pi$
theorem allows us to do further surgeries on the complement of a
tubular neighbourhood of $P$ to obtain a homotopy equivalence.

In the topological category there are four fake $\rp^5$'s. Two of them
are smoothable. We denote these manifolds  by $X^5(p,q)$ ($p\in \{
0,1\}$, $q \in \{1,3\}$) such that the characteristic submanifold of
$X^5(p,q)$ is $(p,q) \in \Omega_4^{\TopPin^+}/\pm = \{ 0,1\} \times
\{0,1,2,3,4\}$. Similar to the smooth case, we can also construct
$X^5(p,q)$ ($p\in \{ 0,1\}$, $q \in \{0,2,4\}$) by circle connected
sum of fake $\rp^5$. (Note that the Kirby-Siebenmann invariant is
additive under the connected sum operation \cite{quinn2}).

\begin{Theorem}\label{thm:four.five}
Every closed topological orientable \abelian $5$-manifold with
fundamental group $\z /2$ and second homology group $\z ^r$ is
homeomorphic to exactly one of the following standard forms:

\

type \textup{I :} $X^5(p,q) \scs (S^2 \times \rp ^3) \scs ((\sharp
k\, S^2 \times S^2)\times S^1)$,

\smallskip

\hspace{2cm} $r=2k+(5+(-1)^q)/2$, $q \in \{0, \dots, 4\}$, $p=0,1$;

\

\hspace{1.2cm} $X^5(p,q) \scs (\cp ^2 \times S^1) \scs ((\sharp k\,
S^2 \times S^2)\times S^1)$,

\smallskip

\hspace{2cm} $r=2k+(3+(-1)^q)/2$, $q \in \{0, \dots, 4\}$, $p=0,1$;

\

type \textup{II :} $(S^2 \times \mathbb R \mathrm P^3) \scs ((\sharp
k\, S^2 \times S^2)\times S^1)$, $r=2k+1$;

\

\hspace{1.3cm} $*(S^2 \times \rp^3) \scs ((\sharp k\, S^2 \times
S^2)\times S^1)$, $r=2k+1$;

\

type \textup{III :} $X^5(p,q) \scs ((\sharp k\,  S^2 \times
S^2)\times S^1)$,
 $r=2k+(1+(-1)^q)/2$, $q \in
\{0, \dots, 4\}$, $p=0,1$.

\end{Theorem}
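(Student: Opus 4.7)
The plan is to mirror the smooth proof of Theorem \ref{thm:four}, invoking the topological classification Theorem \ref{thm:two} as the classifying tool. It suffices to verify that (a) every manifold in the list is a closed, topological, orientable, \abelian $5$-manifold with $\pi_1 \cong \z/2$ and $H_2 \cong \z^r$, (b) distinct entries of the list have distinct invariants (the $w_2$-type, the rank $r$, and the class in $\Omega_4^{\TopPin^\dagger}/\pm$), and (c) every admissible invariant triple is realized by some entry.

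For step (a) I would repeat the van Kampen and Mayer--Vietoris arguments already used to set up the $\scs$ construction: these give $\pi_1 \cong \z/2$, torsion-free $H_2$ of the stated rank, and read off the $w_2$-type from the building blocks. For step (b) the key fact is additivity of the characteristic submanifold under $\scs$: since the characteristic submanifold of $M_1 \scs M_2$ is $P_1 \scs P_2$, the class in $\Omega_4^{\TopPin^\dagger}/\pm$ adds, which reduces the computation to the individual blocks. The blocks $X^5(p,q)$ realize $(p,q) \in \Omega_4^{\TopPin^+}/\pm$, with additivity of the Kirby--Siebenmann component guaranteed by \cite{quinn2}; the pair $S^2 \times \rp^3$ and $*(S^2 \times \rp^3)$ realize the two elements of $\Omega_4^{\TopPin^-}/\pm \cong \z/2$ thanks to the construction of $*(S^2 \times \rp^3)$ given immediately above; $\cp^2 \times S^1$ contributes the $w_2^2$ generator needed to distinguish type I cases within $\Omega_4^{\TopPin^c}/\pm$; and the factor $(\sharp_k\, S^2 \times S^2) \times S^1$ contributes a null-bordant characteristic submanifold, incrementing only $r$.

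For step (c) I would invoke the topological analog of Theorem \ref{thm:three}: the semi-characteristic parity constraints depend only on $H_*(M;\z)$ and are insensitive to the Kirby--Siebenmann invariant, so in the topological case the admissible ranges for $(q,s)$ in type I and $q$ in type III are simply doubled by the $p$ parameter, while in type II the single smooth family is replaced by two topological families distinguished by $KS$. A direct count against the list shows the match is exact.

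The main obstacle will be careful bookkeeping of the framing ambiguity in $\scs$ together with the choice of $\TopPin^\dagger$-structures on the characteristic submanifolds of the building blocks, so that the additivity identifications produce the stated bordism classes; this is handled exactly as in the smooth case, using \cite{quinn2} for additivity of the Kirby--Siebenmann invariant and the fact (already recorded in the construction of $\scs$) that all of $S^2 \times \rp^3$, $\cp^2 \times S^1$, $S^2 \times S^2 \times S^1$, and each fake $\rp^5$ admits an orientation-reversing self-homeomorphism.
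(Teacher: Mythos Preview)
Your proposal is correct and follows exactly the approach the paper takes: the paper does not give a separate proof of Theorem~\ref{thm:four.five} but rather sets up the needed building blocks $*(S^2\times\rp^3)$ and $X^5(p,q)$ in the paragraphs preceding the statement and relies implicitly on the argument of Theorem~\ref{thm:four} with Theorem~\ref{thm:two} replacing Theorem~\ref{thm:one}. Your steps (a)--(c), the additivity of the characteristic submanifold under $\scs$, the use of \cite{quinn2} for additivity of $KS$, and the appeal to the (topological) semi-characteristic relations are precisely what the paper intends.
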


\smallskip

From the above list, we can also give a homotopy classification.
\begin{Theorem}\label{thm:five}
The homotopy type of $M^5$ is determined by its $w_2$-type, $\rk H _2(M)$,
and in the type \textup{I} case the number $\langle w_ 2(M)^2\,\cup\, t
+ t^5, [M] \rangle \in \z/2$, where $t \in H^1(M;\z/2)$ is the
nonzero element.
\end{Theorem}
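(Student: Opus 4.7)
The plan is to reduce to the standard forms of Theorem \ref{thm:four} (or Theorem \ref{thm:four.five} in the topological category), show the three listed quantities are homotopy invariants, compute them on each standard form, and then exhibit homotopy equivalences between any two standard forms that share the same values.

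For homotopy invariance, Wu's formula $\langle v_2 \cup y, [M]_2\rangle = \langle Sq^2 y, [M]_2\rangle$ for all $y \in H^3(M;\z/2)$ characterizes $w_2(M) = v_2(M)$ intrinsically in terms of the $\z/2$-cohomology ring and fundamental class, so the $w_2$-type is a homotopy invariant; $\rk H_2(M)$ obviously is. Since $t \in H^1(M;\z/2)$ is canonically the unique nonzero class, the pairing $\langle w_2^2 \cup t + t^5, [M]_2\rangle \in \z/2$ is likewise a homotopy invariant.

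To evaluate the type I invariant on a standard form, I use that it is additive under $\scs$: the top $\z/2$-class $\alpha = w_2(M)^2 t(M) + t(M)^5$ restricts to each half $M_i \setminus E$ as the corresponding class on $M_i$, and its pullback to $E \cong S^1 \times D^4$ vanishes since $w_2|_E = 0$ and $t|_E^2 = 0$. By Mayer--Vietoris, $\langle \alpha, [M_1 \scs M_2]_2\rangle = \langle \alpha, [M_1]_2\rangle + \langle \alpha, [M_2]_2\rangle \pmod 2$. On the building blocks: every $\XX{q}$ has invariant $0$, because type III forces $w_2 = t^2$ (the generator of $\mathrm{Ext}(H_1,\z/2) \subset H^2(\XX{q};\z/2)$) and hence $w_2^2 t = t^5$; on $S^2 \times \rp^3$ it is $0$ because $w_2 = 0$ and $t^4 = 0$ in $H^*(\rp^3;\z/2)$; on $(\sharp_k S^2 \times S^2) \times S^1$ it is $0$ because $w_2 = 0$ and $t^5 = 0$; and on $\cp^2 \times S^1$ the value is $1$ since $w_2 = h$ makes $w_2^2 t = h^2 t$ the top class while $t^5 = 0$. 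Thus the type I family $\XX{q} \scs (S^2 \times \rp^3) \scs \cdots$ has invariant $0$ (matching $s = 0$) and $\XX{q} \scs (\cp^2 \times S^1) \scs \cdots$ has invariant $1$ (matching $s = 1$).

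It remains to show that standard forms with identical invariants are homotopy equivalent. This reduces to the assertion $\XX{q} \simeq \XX{q'}$ whenever $q \equiv q' \pmod 2$, since the other summands in each standard form are already fixed once $r$, the $w_2$-type, and (in type I) the parameter $s$ are fixed. For $q$ odd this is Wall's classical surgery calculation: every fake $\rp^5$ is homotopy equivalent to $\rp^5$. For $q$ even write $\XX{q} = \XX{l} \scs \XX{l'}$ with $l, l'$ odd and reduce to the odd case via the homotopy invariance of $\scs$. The main obstacle is precisely this homotopy invariance, since $\scs$ depends a priori on choices of embedded tubular neighborhoods and on a framing in $\pi_1 SO(4) \cong \z/2$. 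I would address this by arranging any homotopy equivalence $\XX{l} \simeq \rp^5$ to preserve a chosen $S^1$ generating $\pi_1$ together with its trivial normal $D^4$-bundle (using transversality and low-dimensional obstruction theory), and absorbing the residual framing indeterminacy via the orientation-reversing self-homotopy-equivalences of the building blocks noted in the paper's discussion of $\scs$. Combined with the bookkeeping above, this produces the required homotopy equivalences between all standard forms with matching invariants.
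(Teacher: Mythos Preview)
Your approach is essentially the paper's: reduce to the standard forms, use that every $\XX{q}$ (odd $q$) is homotopy equivalent to $\rp^5$, and invoke that $\scs$ preserves homotopy equivalence. The one substantive difference is how you identify the type~I invariant with the $s$-component. The paper does this via the characteristic submanifold: since $w_2(P)=i^*(w_2(M)+t^2)$ and $[P]$ is Poincar\'e dual to $t$ in $M$, one has directly
\[
\langle w_2(P)^2,[P]\rangle \;=\; \langle (w_2(M)+t^2)^2\cup t,[M]\rangle \;=\; \langle w_2(M)^2\cup t + t^5,[M]\rangle,
\]
so the number in the statement \emph{is} the $s$-invariant of $[P]\in\Omega_4^{\Pin^c}$. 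Your additivity-on-building-blocks computation reaches the same conclusion and is more elementary, but it loses this conceptual identification with the bordism invariant.

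One caution: your proposed absorption of the $\pi_1 SO(4)$ framing ambiguity by ``orientation-reversing self-homotopy-equivalences'' conflates two separate indeterminacies. The orientation-reversing automorphisms in the paper's discussion of $\scs$ dispose of the \emph{orientation} dependence, not the framing dependence; the latter is a genuinely different issue (and the paper itself simply asserts that $\scs$ preserves homotopy equivalence without further argument). If you want to make this step rigorous, you should argue instead that the two gluings differ by a self-map of $S^1\times S^3$ which extends to a self-homotopy-equivalence of $S^1\times D^4$, or else appeal to the classification itself to see that the two possible $\XX{l}\scs\XX{l'}$ share all the listed homotopy invariants.
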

\begin{proof}
Note that $\XX{q}$  and $X^5(p,q)$ are homotopy equivalent to
$\rp^5$ and the operation $\scs$ preserves homotopy equivalence.
This proves the theorem for the type II and III cases. For type I
manifolds, the $s$-component of the characteristic submanifold $P$
is determined by $\langle w_ 2(P)^2, [P]\rangle$. Since $w
_2(P)=i^*(w_ 2(M)+t^2)$, $\langle w _2(P)^2, [P]\rangle =\langle w_
2(M)^2\cup t + t^5, [M] \rangle$, and this is a homotopy invariant.
\end{proof}

\section{Bordism and Surgery}\label{sec:four}

\subsection{The framework of modified surgery}
The main tool used in our solution of the classification problem is
the modified surgery developed by Kreck \cite{kreck4}, \cite{kreck3}. We first briefly describe how this theory is applied in our situation.

Let $p\colon  B \to BO$ be a fibration, and
$\bar{\nu}\colon  M^{2m-1} \to B$ be a lift of the normal Gauss map
$\nu\colon  M \to BO$ classifying the stable normal bundle of $M$. Such a lift $\bar{\nu}$ is called a \emph{normal $B$-structure}
of $M$, and the pair $(M, \bar{\nu})$ is called a \emph{normal $k$-smoothing in $B$} if the map $\bar{\nu}$ is a
$(k+1)$-equivalence. Manifolds with normal $B$-structures form a bordism
theory $\Omega_*(B,p)$, described in Stong \cite[Chap.~II]{stong1}.

Suppose $(M^{2m-1}_ i, \bar{\nu_ i})$ ($i=1,2$) are two normal
$(m-1)$-smoothings in $B$, and suppose that $(W^{2m}, \bar{\nu})$ is a $B$-bordism between
$(M^{2m-1}_ 1, \bar{\nu_ 1})$ and $(M^{2m-1}_ 2, \bar{\nu _2})$. Then the surgery obstruction for $W^{2m}$ being $B$-bordant rel.~boundary to an $s$-cobordism (implying that $M_ 1$ and
$M_ 2$ are diffeomorphic) is a $(-1)^{m}$-quadratic form over $(\Lambda, S)$, where $\Lambda =\z[\pi_{1}(B)]$ is the group ring and $S \subset \Lambda$ is a certain form-parameter subgroup. The surgery obstruction lies in an abelian group $L_{2m}^{s, \tau}(\pi_{1}(B), w_{1}(B), S)$ (\cite[Theorem 5.2 b]{kreck4}), where $w_{1}(B)$ is the orientation character. This group is related to Wall's $L$-group in the following diagram (\cite[p.37]{kreck4})
$$\xymatrix@R-5pt{
0 \ar[r] & L^{s}_{2m}(\pi_{1}, w_{1}) \ar[r] & L^{s, \tau}_{2m}(\pi_{1}, w_{1}) \ar[r] \ar[d] & Wh(\pi_{1}) \\
              &                                                              & L^{s, \tau}_{2m}(\pi_{1}, w_{1}, S) \ar[d] & \\
              &                                                              & 0 &
              }$$
where $Wh(\pi_{1})$ is the Whitehead group (see \cite{milnor1}).

In our case, $\pi_ 1=\z/2$, $Wh(\z/2)=0$ and $L^{s}_{6}(\z/2)=\z/2$.
Therefore our surgery obstruction group is either $0$ or $\z/2$. In
the latter case, it is isomorphic to $L_{6}^{s}(\z/2)$, the
non-trivial element  is detected by the Kervaire-Arf invariant (see Wall \cite[\S 13A]{wall-book}).
Since the closed manifold  $S^3\times S^3$ admits a framing with Arf invariant 1, we may eliminate the surgery obstruction by connected sum in the interior of $W$.
We have the following:

\begin{Proposition}\label{prop:one}
Two smooth $5$-manifolds $M_1$ and $M_2$ with fundamental group
$\z/2$ are diffeomorphic if they have bordant normal $2$-smoothings
in some fibration $B$.
\end{Proposition}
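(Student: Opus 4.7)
The plan is a direct application of the modified surgery framework just reviewed. Given a normal $B$-bordism $(W^{6},\bar\nu)$ between $(M_1,\bar\nu_1)$ and $(M_2,\bar\nu_2)$, I would first perform surgeries in the interior of $W$, rel.~boundary, to promote $\bar\nu\colon W\to B$ to a $3$-equivalence. This is the standard ``surgery below the middle dimension'' step: because the $\bar\nu_i$ are already $3$-equivalences by the normal $2$-smoothing hypothesis, the relative homotopy groups $\pi_i(W,M_j)$ for $i\le 3$ are finitely generated $\z[\z/2]$-modules and their generators can be represented by embedded spheres with trivial normal bundle in $\operatorname{int}(W)$; surgering them does not disturb the boundary.

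Once $\bar\nu$ is a $3$-equivalence, Kreck's theorem (\cite[Thm.~5.2(b)]{kreck4}) identifies the obstruction to converting $(W,\bar\nu)$ rel.~boundary into an $s$-cobordism with an element $\theta(W,\bar\nu)\in L^{s,\tau}_6(\z/2,w_1(B),S)$. In our situation $\Wh(\z/2)=0$, so the top row of the diagram displayed above collapses to an isomorphism $L^{s,\tau}_6(\z/2,w_1(B))\cong L^{s}_6(\z/2)\cong\z/2$, and the vertical surjection shows that $L^{s,\tau}_6(\z/2,w_1(B),S)$ is a quotient of $\z/2$, hence either $0$ or $\z/2$. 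In the nontrivial case, $\theta$ is detected by the Kervaire-Arf invariant of the quadratic refinement on the middle-dimensional surgery kernel, as in \cite[\S 13A]{wall-book}.

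If $\theta(W,\bar\nu)\ne 0$, I would eliminate it by forming the interior connected sum $W\,\sharp\,(S^3\times S^3)$ with a stable framing on $S^3\times S^3$ whose Kervaire-Arf invariant equals $1$, the existence of such a framing being classical. The $B$-structure is extended over the summand via the composition $S^3\times S^3\to\mathrm{pt}\to B$, and the surgery obstruction of the new bordism differs from $\theta(W,\bar\nu)$ by the generator of $\z/2$, so it vanishes. The modified bordism is then $B$-bordant rel.~boundary to an $s$-cobordism, and since $\Wh(\z/2)=0$ the $s$-cobordism theorem in dimension $6$ produces the desired diffeomorphism $M_1\cong M_2$.

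There is no single hard step: the proposition is essentially a translation of Kreck's theorem into the present setting, with $\Wh(\z/2)=0$ and $L^s_6(\z/2)=\z/2$ doing all the work. The only points requiring care are that the connected sum really changes the Kervaire-Arf invariant additively, and that the form-parameter quotient from \cite[Thm.~5.2(b)]{kreck4} still leaves the Arf invariant visible, both of which are spelled out in Kreck's paper and in \cite[\S 13A]{wall-book}.
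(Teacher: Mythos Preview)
Your proposal is correct and follows essentially the same argument as the paper: use Kreck's obstruction in $L^{s,\tau}_6(\z/2,w_1(B),S)$, observe via $\Wh(\z/2)=0$ and $L^s_6(\z/2)\cong\z/2$ that it is at worst $\z/2$ detected by the Kervaire--Arf invariant, and kill it by interior connected sum with $S^3\times S^3$ carrying an Arf-$1$ framing. The only minor slip is that the obstructions for surgery below the middle dimension live in the relative homotopy of the map $\bar\nu\colon W\to B$, not in $\pi_i(W,M_j)$, but this does not affect the argument.
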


The fibration $B$ is called the \emph{normal $2$-type of $M$} if $p$ is
$3$-coconnected. This is an invariant of $M$. Because of this
proposition, the solution to the classification problem consists of
two steps: first, determine the normal $2$-types $B$ for the
$5$-manifolds under consideration, and then determine invariants to detect the  corresponding bordism groups $\Omega _5(B,p)$.

\subsection{Normal $2$-types}
Let $M^5$ be a \abelian $5$-maniofold. The universal coefficient
theorem implies that $H_ 2(\widetilde{M}) \otimes _{\z[\pi_ 1]} \z
\to H_ 2(M)$ is an isomorphism. Since the $\pi_ 1(M)$-action on $H_
2(\widetilde{M})$ is trivial, we have $H_ 2(\widetilde{M}) \otimes
_{\z[\pi_ 1]} \z =H_ 2(\widetilde{M})$, therefore $H_
2(\widetilde{M}) \to H_ 2(M)$ is an isomorphism, also is the second
Hurewicz map $\pi_ 2(M) \to H_ 2(M)$. Now suppose $\pi_ 1(M) \cong
\z/2$ and $H_ 2(M) \cong \z^r$.

We start with the description of the normal $2$-types for type II
manifolds. It is the simplest situation and illuminates the ideas.

\smallskip
\noindent
\underline{Type II:} consider the fibration
$$p\colon  B=\rp^{\infty} \times
(\cp^{\infty})^r \times B\Spin \to BO,$$ where $p\colon  B \to BO$
is trivial on the first two factors and on $B\Spin$ it is the
canonical projection from $B\Spin$ onto $BO$. A lift $\bar{\nu}  \colon M \to
B$ is given as follows: the map to $\rp^{\infty}$ is the classifying
map of the fundamental group; choose a basis $\{u _1, \dots, u_ r\}$
of the free part of $H^2(M) \cong \z^r \oplus \z/2$, by realizing
each element $u_ i$ by a map to $\cp^{\infty}$ we get a map to
$(\cp^{\infty})^r$; a $\Spin$-structure on $\nu M$ gives rise to a
map to $B\Spin$. It's easy to see that $(B,p)$ is the normal
$2$-type of type II manifolds and that $\bar{\nu}$ induces an
isomorphism on $\pi_ 1$ and $H_2$. Since the second Hurewicz maps
$\pi_ 2(M) \to H_ 2(M)$ and $\pi_ 2((\cp^{\infty})^r) \to H_
2((\cp^{\infty})^r)$ are isomorphisms, $\bar{\nu}$ is a normal
$2$-smoothing.

\smallskip
\noindent
 \underline{Type III:} let $\eta$ be the canonical real
line bundle over $\rp^{\infty}$, and $2\eta=\eta \oplus \eta$. Consider
the fibration
$$p\colon  B=\rp^{\infty} \times
(\cp^{\infty})^r \times B\Spin \stackrel{f _1 \times f_
2}{\longrightarrow} BO \times BO \stackrel{\oplus}{\longrightarrow}
BO,$$ where $f_ 1\colon  \rp^{\infty} \times (\cp^{\infty})^r \to
BO$ is the classifying map of $p_{1}^*(2\eta)$, (where $p_{1} \colon
\rp^{\infty} \times (\cp^{\infty})^r \to \rp^{\infty}$ is the
projection map,) $f_ 2\colon  B\Spin \to BO$ is the canonical
projection and $\oplus\colon  BO \times BO \to BO$ is the $H$-space
structure on $BO$ induced by the Whitney sum of vector bundles. A
lift $\bar{\nu} \colon M \to B$ is given as follows: the map to $\rp^{\infty}
\times (\cp^{\infty})^r$ is the same as in type II. Since $w_
2(2\eta)=w_ 1(\eta)^2$ is the nonzero element in $\mathrm{Ext}(H_
1(\rp^{\infty}), \z/2)$ and $w_ 2(M)$ is the nonzero element in
$\mathrm{Ext}(H_ 1(M), \z/2)$, we have $w_ 2(\bar{\nu}^*2\eta) =w_
2(\nu M)$. This implies that $\nu M - \bar{\nu}^*2\eta$ admits a
$\Spin$-structure. Such a structure induces a map to $B\Spin$. Then
$\bar{\nu}$ is a lift of $\nu$. It is easy to see that $(B,p)$ is
the normal $2$-type of type III manifolds and $\bar{\nu}$ is a
normal $2$-smoothing.

\smallskip
\noindent
\underline{Type I:} let $\gamma$ be the canonical complex line
bundle over $\cp^{\infty}$. Consider the fibration
$$p\colon  B=\rp^{\infty} \times
(\cp^{\infty})^r \times B\Spin \stackrel{f_ 1 \times f_
2}{\longrightarrow} BO \times BO \stackrel{\oplus}{\longrightarrow}
BO,$$ where $f _1\colon  \rp^{\infty} \times (\cp^{\infty})^r \to
BO$ is the classifying map of $p_{2}^*\gamma$, $p_{2}\colon
\rp^{\infty} \times (\cp^{\infty})^r \to \cp^{\infty}$ is the
projection map to the first $\cp^{\infty}$. A lift $\bar{\nu} \colon M \to B$ is
given as follows: since the Bockstein homomorphism $\beta \colon
H^2(M;\z/2) \to H^3(M;\z)$ is trivial, $w_ 2(M)$ is the mod $2$
reduction of an integral cohomology class. Since $w _2(M)$ is not
contained in $\mathrm{Ext}(H_ 1(M),\z/2)$, this integral cohomology
class can be taken as a primitive one, say, $u_1$ and we extend it
to a basis $\{u_ 1, \dots, u_ r\}$. Then the map to $\rp^{\infty}
\times (\cp^{\infty})^r$ is the same as above. Now $\nu M -
\bar{\nu}^* \gamma$ admits a $\Spin$-structure, this gives rise to a
map $M \to B\Spin$. Then $\bar{\nu}$ is a lift of $\nu$. It is easy
to see that $(B,p)$ is the normal $2$-type of type I manifolds and
$\bar{\nu}$ is a normal $2$-smoothing.

\subsection{Computation of the bordism groups}
In this subsection we calculate the bordism groups $\Omega_{5}{(B,p)}$ for our types:
$$\Omega_5^{\Spin}(\rp^{\infty} \times (\cp^{\infty})^r), \
\Omega_5^{\Spin}(\rp^{\infty} \times (\cp^{\infty})^r; p_{1}^*2\eta), \
\Omega_5^{\Spin}(\rp^{\infty} \times (\cp^{\infty})^r;
p_{2}^*\gamma).$$ The main tools are the Atiyah-Hirzebruch spectral sequence and the Adams spectral sequence.
Before doing the calculation, we need to compute  $\Omega_5^{\Spin}(\rp^{\infty})$, $\Omega_5^{\Spin}(\rp^{\infty}; 2\eta)$ and $\Omega_{5}^{\Spin}(\rp^{\infty} \times \cp^{\infty}; p_{2}^{*}\gamma)$. These groups can be calculated via the Adams spectral sequence. Here we give an alternative argument, emphasizing the role of the characteristic submanifolds.

There are long exact sequences (this is a special case of
\cite[(3.2)]{galatius-madsen-tillmann-weiss1})
$$\dots \to\Omega_n^{\Spin} \to \Omega_n^{\Spin}(\rp^{\infty};k\eta)
\stackrel{\partial}{\rightarrow}
\Omega_{n-1}^{\Spin}(\rp^{\infty};(k+1)\eta) \to
\Omega_{n-1}^{\Spin}\to \dots$$ and
$$\dots \to \Omega_n^{\Spin}(\cp^{\infty};\gamma) \to \Omega_n^{\Spin}(\rp^{\infty} \times \cp^{\infty};p_{2}^{*}\gamma)
\stackrel{\partial}{\rightarrow} \Omega_{n-1}^{\Spin}(\rp^{\infty}
\times \cp^{\infty};\eta\times \gamma) \to \dots$$
 where the maps
$\partial$ correspond to taking a characteristic submanifold.
In particular we have an isomorphism
$\Omega_5^{\Spin}(\rp^{\infty}) \stackrel{\cong}{\rightarrow}
\Omega_4^{\Spin}(\rp^{\infty};\eta)$, together with exact sequences
$$0 \to \Omega_5^{\Spin}(\rp^{\infty}; 2\eta) \to
\Omega_4^{\Spin}(\rp^{\infty};3\eta)$$
and
$$
 \Omega_{5}^{\Spin}(\cp^{\infty};\gamma) \to\Omega_{5}^{\Spin}(\rp^{\infty} \times \cp^{\infty}; p_{2}^{*}\gamma)
\to \Omega_{4}^{\Spin}(\rp^{\infty} \times \cp^{\infty}; \eta \times \gamma)\to\Omega_{4}^{\Spin}(\cp^{\infty};\gamma)
$$
Furthermore, we have
$$\Omega_n^{\Spin}(\rp^{\infty};\eta) \cong \Omega_n^{\Pin^-}, \
\Omega_n^{\Spin}(\rp^{\infty};3\eta) \cong \Omega_n^{\Pin^+}, \
\Omega_n^{\Spin}(\rp^{\infty} \times \cp^{\infty};\eta \times
\gamma) \cong \Omega_n^{\Pin^c}.$$ 
This is seen as follows: first,
given $[X^{n}, f] \in \Omega_{n}^{\Spin}(\rp^{\infty}; \eta)$,
clearly $$w_{1}(f^{*}\eta)=w_{1}(X)=w_{1}(\det TX).$$
Therefore by
Lemma \ref{lem:pin-spin},  the Spin-structure on $TX \oplus
f^{*}\eta$ induces a $\Pin^{-}$-structure on $TX$ and we have a
well-defined map $\Omega_n^{\Spin}(\rp^{\infty};\eta) \to
\Omega_n^{\Pin^-}$. Given $X^{n}$ together with a
$\Pin^{-}$-structure, by letting $f \colon X \to \rp^{\infty}$ be
the classifying map for $w_{1}(X)$, we obtain $[X, f] \in
\Omega_n^{\Spin}(\rp^{\infty};\eta)$. These two maps are inverse to
each other.  The $\Pin^{+}$ and $\Pin^{c}$ cases are similar.

The $\Pin^{\pm}$-bordism groups in low dimensions were calculated in
\cite{kirby-taylor2}: we have $\Omega_{4}^{\Pin^{-}}=0$ and
$\Omega_{4}^{\Pin^{+}} \cong \z/16$, generated by $\pm \rp^{4}$.
Also it is clear that under the map $$\Omega_5^{\Spin}(\rp^{\infty};
2\eta) \to \Omega_4^{\Spin}(\rp^{\infty};3\eta)\cong
\Omega_{4}^{\Pin^{+}},$$
the element $[\rp^{5}, \mathrm{inclusion}]$ goes to $\pm
\rp^{4}$, therefore the map $$\Omega_5^{\Spin}(\rp^{\infty}; 2\eta)
\to \Omega_4^{\Spin}(\rp^{\infty};3\eta)$$ is an isomorphism. An easy
Atiyah-Hirzebruch spectral sequence calculation shows that
$\Omega_{5}^{\Spin}(\cp^{\infty};\gamma) \cong \w
\Omega_{7}^{\Spin}(\cp^{\infty})=0$ and
$\Omega_{4}^{\Spin}(\cp^{\infty};\gamma) \cong \w
\Omega_{6}^{\Spin}(\cp^{\infty}) \cong \z \oplus \z$. Therefore the
map $\Omega_{5}^{\Spin}(\rp^{\infty} \times \cp^{\infty};
p_{2}^{*}\gamma) \to \Omega_{4}^{\Spin}(\rp^{\infty} \times
\cp^{\infty}; \eta \times \gamma)$ is also an isomorphism. To
summarize, we have

\begin{Lemma}
Taking characteristic submanifolds gives isomorphisms
$$\Omega_{5}^{\Spin}(\rp^{\infty}) \cong  \Omega_{4}^{\Pin^{-}}, \ \Omega_{5}^{\Spin}(\rp^{\infty}; 2\eta)  \cong \Omega_{4}^{\Pin^{+}}, \
\Omega_{5}^{\Spin}(\rp^{\infty} \times \cp^{\infty}; p_{2}^{*}\gamma) \cong \Omega_{4}^{\Pin^{c}}.$$
\end{Lemma}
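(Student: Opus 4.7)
The plan is to read each of the three isomorphisms off the long exact sequences (boundary $\partial = $ ``take characteristic submanifold'') and the $\Pin$-bordism identifications $\Omega_n^{\Spin}(\rp^{\infty}; k\eta) \cong \Omega_n^{\Pin^{\pm}}$, $\Omega_n^{\Spin}(\rp^{\infty}\times\cp^{\infty}; \eta\times\gamma) \cong \Omega_n^{\Pin^c}$ that have already been assembled above. In each case it suffices to check that the relevant degree-$(5\to 4)$ boundary map is both injective and surjective.

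First I would plug $n=5, k=0$ into the $\rp^{\infty}$-sequence to get
$$\Omega_5^{\Spin} \to \Omega_5^{\Spin}(\rp^{\infty}) \stackrel{\partial}{\to} \Omega_4^{\Spin}(\rp^{\infty};\eta) \to \Omega_4^{\Spin}.$$
Since $\Omega_5^{\Spin}=0$ and $\Omega_4^{\Spin}(\rp^{\infty};\eta) \cong \Omega_4^{\Pin^-}=0$, the map $\partial$ is trivially an isomorphism. For the second isomorphism, the exact sequence already displayed gives an injection
$$\partial\colon \Omega_5^{\Spin}(\rp^{\infty}; 2\eta) \hookrightarrow \Omega_4^{\Spin}(\rp^{\infty}; 3\eta) \cong \Omega_4^{\Pin^+} \cong \z/16,$$
and I would finish by tracking the class $[\rp^5, \mathrm{inclusion}]$: its characteristic submanifold is $\rp^4$ with the induced $\Pin^+$-structure, representing $\pm[\rp^4]$, which by Kirby--Taylor generates $\z/16$. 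Hence $\partial$ is surjective as well.

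For the third isomorphism, I would use the four-term piece
$$\Omega_5^{\Spin}(\cp^{\infty};\gamma) \to \Omega_5^{\Spin}(\rp^{\infty}\times\cp^{\infty}; p_2^*\gamma) \stackrel{\partial}{\to} \Omega_4^{\Spin}(\rp^{\infty}\times\cp^{\infty}; \eta\times\gamma) \to \Omega_4^{\Spin}(\cp^{\infty};\gamma).$$
The Atiyah--Hirzebruch computation indicated above yields $\Omega_5^{\Spin}(\cp^{\infty};\gamma) \cong \w\Omega_7^{\Spin}(\cp^{\infty})=0$, giving injectivity of $\partial$. For surjectivity, the target $\Omega_4^{\Spin}(\cp^{\infty};\gamma) \cong \w\Omega_6^{\Spin}(\cp^{\infty}) \cong \z\oplus\z$ is torsion-free, whereas the source $\Omega_4^{\Spin}(\rp^{\infty}\times\cp^{\infty}; \eta\times\gamma) \cong \Omega_4^{\Pin^c}$ is finite (of order $16$), so the last arrow vanishes and $\partial$ is onto.

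The only subtlety --- and it is bookkeeping, not a real obstacle --- is verifying that the boundary map $\partial$ in each long exact sequence does coincide with the geometric operation of taking a characteristic submanifold with its induced $\Pin^{\dagger}$-structure, and that the $\Pin^+$-structure on $\rp^4 \subset \rp^5$ coming from the inclusion really does realize the Kirby--Taylor generator; both points are essentially packaged in the identifications cited from \cite{kirby-taylor2} and \cite{hkt1}. Granting these, the lemma is a direct consequence of the bordism-group computations already laid out.
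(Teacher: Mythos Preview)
Your argument is correct and matches the paper's approach essentially line for line: the paper also reads the three isomorphisms off the displayed long exact sequences, using $\Omega_5^{\Spin}=0$ and $\Omega_4^{\Pin^-}=0$ for the first case, the image $[\rp^5]\mapsto \pm[\rp^4]$ for the $\Pin^+$ case, and the Atiyah--Hirzebruch computations $\w\Omega_7^{\Spin}(\cp^\infty)=0$, $\w\Omega_6^{\Spin}(\cp^\infty)\cong\z\oplus\z$ (torsion-free target versus finite source) for the $\Pin^c$ case. The only cosmetic difference is that the paper asserts the first isomorphism before invoking $\Omega_4^{\Pin^-}=0$, implicitly using the split injection $\Omega_4^{\Spin}\hookrightarrow\Omega_4^{\Spin}(\rp^\infty)$ to get surjectivity, whereas you short-circuit this by noting both groups vanish.
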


Now we begin the calculation of the bordism groups of interest. As in the last subsection, we start with the type II manifolds,
which is the simplest case.

\medskip
\noindent
\underline{Type II:} recall that the normal $2$-type is
$$p\colon  B=\rp^{\infty} \times
(\cp^{\infty})^r \times B\Spin \to BO,$$ where $p\colon  B \to BO$
is trivial on the first two factors and is the canonical projection
from $B\Spin$ onto $BO$. Therefore the bordism group $\Omega_
5{(B,p)}$ is the $\Spin$-bordism group $\Omega_
5^{\Spin}(\rp^{\infty} \times (\cp^{\infty})^r)$. To compute this
bordism group, we apply the Atiyah-Hirzebruch spectral sequence. The
$E^2$-terms are $E^2 _{p,q}=H _p(\rp^{\infty} \times
(\cp^{\infty})^r;\Omega_ q^{\Spin})$.

To illuminate the situation, we first consider  the group $\Omega_
5^{\Spin}(\rp^{\infty} \times \cp^{\infty})$. The relevant terms and
differentials in the spectral sequence are depicted as follows:

\begin{center}

\def\sseqgridstyle{\ssgridgo}
\sseqentrysize=.6cm
\def\sseqpacking{\sspackhorizontal}
\begin{sseq}{8}{8}
\ssmoveto 1 4  \ssdropbull \ssdashedline {2}{-2}\ssdropbull
\ssdashedline {1}{-1}\ssdropbull \ssdashedline {1}{-1} \ssdropbull
\ssarrow {-2}{1} \ssdropbull

\ssmoveto 4 1 \ssarrow {-2}{1}  \ssdropbull

\ssmoveto 5 1   \ssdropbull \ssarrow {-2}{1}

\ssmoveto 4 2  \ssdropbull \ssarrow {-3}{2}

\ssmoveto 1 4 \ssdashedline {-1}{1} \ssdrop{\cdot}

\ssmoveto 6 0 \ssdropbull \ssarrow{-2}{1}

\ssmoveto 7 0
\end{sseq}
\end{center}

\noindent
The $E^2$-terms are:
\begin{itemize}\addtolength{\itemsep}{0.2\baselineskip}
\item $E^2_{1,4}=H_1(\rp^{\infty} \times \cp^{\infty})\cong \z/2$,
\item $E^2_{2,2}=H_2(\rp^{\infty} \times \cp^{\infty};\z/2) \cong \z/2 \oplus \z/2$,
\item
$E^2_{3,1}=E^2_{3,2}=H_3(\rp^{\infty} \times \cp^{\infty};\z/2)
\cong \z/2 \oplus \z/2$,
\item $E^2_{4,1}=E^2_{4,2}=H_4(\rp^{\infty}
\times \cp^{\infty};\z/2) \cong (\z/2)^3$,
\item
$E^2_{5,0}=H_5(\rp^{\infty} \times \cp^{\infty}) \cong (\z/2)^3$,
\item
$E^2_{5,1}=H_5(\rp^{\infty} \times \cp^{\infty};\z/2) \cong
(\z/2)^3$,
\item  $E^2_{6,0}=H_6(\rp^{\infty} \times \cp^{\infty}) \cong
\z/2$.
\end{itemize}

\medskip
\noindent
The differential $d_ 2 \colon E^{2}_{p,1} \to E^{2}_{p-2,2}$ is dual to the Steenrod square
$$Sq^2 \colon H^{p-2}(\rp^{\infty} \times \cp^{\infty};\z/2) \to H^{p}(\rp^{\infty} \times \cp^{\infty};\z/2);$$
the differential $d_ 2 \colon E^{2}_{p,0} \to E^{2}_{p-2,1}$ is the mod $2$ reduction composed with the dual of the Steenrod square
$$H_{p}(\rp^{\infty} \times \cp^{\infty};\z) \to H_{p}(\rp^{\infty} \times \cp^{\infty};\z/2) \stackrel{(Sq^{2})^*}{\longrightarrow} H_{p-2}(\rp^{\infty} \times \cp^{\infty};\z/2).$$
With these identifications, the differentials $d_{2}$ starting from or ending at the line $p+q=5$ are easily computed.  Let $\alpha
\in H^1(\rp^{\infty};\z/2)$, $\beta \in H^2(\cp^{\infty};\z/2)$ denote
the generators, then on the $E^{3}$-page, we have three nontrivial terms in the line $p+q=5$: $E_{5,0}^{3} = \z/2$, dual to $\alpha^{3} \beta$; $E^{3}_{4,1} = \z/2$, dual to $\alpha^{2} \beta$; and $E^{3}_{1,4} =\z/2$. The terms $E^{3}_{5,0}$ and $E_{4,1}^{3}$ must survive to infinity, for there are no non-trivial differentials starting from or ending at these two positions (see the picture above). 

There is a possibly non-trivial differential $d_{3} \colon E^{3}_{4,2} \to E^{3}_{1,4}$.
To see this differential is indeed non-trivial, we just need to note that the terms $E^{3}_{1,4}=E^{2}_{1,4}=H_{1}(\rp^{\infty} \times \cp^{\infty};\z/2)$ come from $\rp^{\infty}$ and $\Omega_{5}^{\Spin}(\rp^{\infty})=\Omega_{4}^{\Pin^{-}}=0$. Therefore on the
$E^{\infty}$-page, in the line $p+q=5$, the nontrivial terms are
$$E^{\infty}_{5,0}=H_3(\rp^{\infty})\otimes H_2(\cp^{\infty}) \cong
\z/2, \ \ \ E^{\infty}_{4,1}=H_2(\rp^{\infty};\z/2)\otimes
H_2(\cp^{\infty};\z/2) \cong \z/2.$$

The calculation is finished once the extension problem is solved. We state the result in the following lemma.  Let $\tau \colon \cp^{\infty} \to \cp^{\infty}$ be
the involution on $\cp^{\infty}$ with $\tau_*=-1$ on
$H_2(\cp^{\infty})$, then $\tau$ induces an involution $\tau_{*}$ on $\Omega_ 5^{\Spin}(\rp^{\infty} \times
\cp^{\infty})$. Let $\alpha
\in H^1(\rp^{\infty};\z/2)$, $\beta \in H^2(\cp^{\infty};\z/2)$ be the nonzero elements.

\begin{Lemma}\label{lem:nonsplit}
The short exact sequence
$$0 \to \z/2 \to \Omega_ 5^{\Spin}(\rp^{\infty} \times
\cp^{\infty}) \to \z/2\to 0$$ is nonsplit, thus $\Omega_
5^{\Spin}(\rp^{\infty} \times \cp^{\infty}) \cong \z/4$. The elements $\pm 1 $
are represented by $\rp^{3} \times \cp^{1} \hookrightarrow
\rp^{\infty} \times \cp^{\infty}$. A bordism class $[X^5,f]$ equals
$\pm 1$ if and only if
 $\langle  \alpha ^3 \,\cup\, \beta, f_ *[X]
\rangle =1 \in \z/2$. There is a relation $\langle \alpha \,\cup\,
\beta^2 , f_
*[X] \rangle =0$. The action $\tau_{*}$ is the multiplication by $-1$.
\end{Lemma}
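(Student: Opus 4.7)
My plan has four steps: identify $[\rp^3\times\cp^1]$ as a representative of the generator of $E^{\infty}_{5,0}$, resolve the extension via a comparison argument, verify the cohomological relation, and compute $\tau_*$.

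For the identification, a direct evaluation gives $\langle\alpha^3\cup\beta,[\rp^3\times\cp^1]\rangle=1$, since $\alpha^3$ generates $H^3(\rp^3;\z/2)$ and $\beta$ restricts to the generator of $H^2(\cp^1;\z/2)$; hence $[\rp^3\times\cp^1]$ projects non-trivially onto the quotient $E^{\infty}_{5,0}\cong\z/2$ of the AHSS filtration.

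The main technical step is to show $2[\rp^3\times\cp^1]\ne 0$, which together with the upper bound $|\Omega_5^{\Spin}(\rp^{\infty}\times\cp^{\infty})|\le 4$ from the AHSS forces the extension to be non-split and $[\rp^3\times\cp^1]$ to generate $\z/4$. My plan is to compare with the simpler group $\Omega_5^{\Spin}(\rp^{\infty}\times\cp^1)$. The wedge decomposition $\cp^1_+\simeq S^0\vee S^2$ yields
$$\Omega_5^{\Spin}(\rp^{\infty}\times\cp^1)\cong\Omega_5^{\Spin}(\rp^{\infty})\oplus\Omega_3^{\Spin}(\rp^{\infty})\cong 0\oplus\z/8,$$
with $[\rp^3\times\cp^1]$ generating the $\z/8$-summand (via $\Omega_3^{\Spin}(\rp^{\infty})\cong\Omega_2^{\Pin^-}\cong\z/8$, whose generator $[\rp^2]$ is the characteristic submanifold of $\rp^3$). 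Denoting by $j_*$ the map induced by the inclusion $\cp^1\hookrightarrow\cp^{\infty}$, the Puppe sequence of the cofibration $(\rp^{\infty}\times\cp^1)\hookrightarrow(\rp^{\infty}\times\cp^{\infty})\to\rp^{\infty}_+\wedge\cp^{\infty}/\cp^1$ gives
$$\w\Omega_6^{\Spin}(\rp^{\infty}_+\wedge\cp^{\infty}/\cp^1)\stackrel{\partial}{\longrightarrow}\Omega_5^{\Spin}(\rp^{\infty}\times\cp^1)\stackrel{j_*}{\longrightarrow}\Omega_5^{\Spin}(\rp^{\infty}\times\cp^{\infty}).$$
Because $\cp^{\infty}/\cp^1$ is $3$-connected, only a handful of cells contribute to the relative term, and a bounded AHSS computation for $\w\Omega_6^{\Spin}(\rp^{\infty}_+\wedge\cp^{\infty}/\cp^1)$ together with tracking of $\partial$ should identify $\mathrm{im}(\partial)$ with $4\z/8\subset\z/8$. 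The chief obstacle is resolving the relevant AHSS differentials via Steenrod operations in the mod $2$ cohomology of $\rp^{\infty}_+\wedge\cp^{\infty}/\cp^1$. Granted this, $j_*[\rp^3\times\cp^1]$ has order exactly $4$.

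For the relation $\langle\alpha\cup\beta^2,f_*[X]\rangle=0$, I would check on coset representatives: on $[\rp^3\times\cp^1]$ it vanishes because $\beta^2$ restricts to zero in $H^4(\cp^1)$, and the subgroup $E^{\infty}_{4,1}$, being dual to $\alpha^2\beta$, can be represented by cycles not involving $\beta^2$; so the relation holds throughout the extension. Finally, for $\tau_*$: the involution $\tau$ on $\cp^{\infty}$ with $\tau_*=-1$ on $H_2$ restricts to complex conjugation on $\cp^1$, which is orientation-reversing, so $\tau_*[\rp^3\times\cp^1]=-[\rp^3\times\cp^1]$; since this element generates $\z/4$, $\tau_*$ acts as multiplication by $-1$.
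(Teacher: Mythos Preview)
Your approach and the paper's share the same underlying idea---both exploit that $[\rp^3\times\cp^1]$ arises from the cyclic group $\Omega_3^{\Spin}(\rp^{\infty})\cong\z/8$---but they diverge at the critical step. You try to compute the \emph{kernel} of the map $j_*\colon\z/8\to\Omega_5^{\Spin}(\rp^{\infty}\times\cp^{\infty})$ via the Puppe sequence, and you correctly identify computing $\mathrm{im}(\partial)$ as the chief obstacle; indeed, you leave this unfinished. The paper instead proves \emph{surjectivity} of essentially the same map, packaged as the product pairing
\[
\varphi\colon\Omega_3^{\Spin}(\rp^{\infty})\otimes\Omega_2^{\Spin}(\cp^{\infty})\to\Omega_5^{\Spin}(\rp^{\infty}\times\cp^{\infty}),
\]
by using the multiplicative structure on the Atiyah--Hirzebruch spectral sequence: the cross-product maps $E^{\infty}_{3,0}(1)\otimes E^{\infty}_{2,0}(2)\to E^{\infty}_{5,0}(3)$ and $E^{\infty}_{2,1}(1)\otimes E^{\infty}_{2,0}(2)\to E^{\infty}_{4,1}(3)$ are visibly onto, and the $\Omega_2^{\Spin}$-summand of $\Omega_2^{\Spin}(\cp^{\infty})$ contributes trivially since it factors through $\rp^{\infty}$. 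Surjectivity of $\varphi$ plus the order bound $4$ immediately forces $\z/4$, with no relative-term computation needed. This is strictly easier than your route.

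Two smaller points. Your argument for the relation $\langle\alpha\cup\beta^2,f_*[X]\rangle=0$ by ``checking on coset representatives'' is shakier than it needs to be: the clean statement is that $f_*[X]$ lies in the image of the edge homomorphism, i.e.\ in $E^{\infty}_{5,0}\subset H_5$, and the class dual to $\alpha\beta^2$ was killed there because $d_2$ (dual to $Sq^2$) sends $\alpha\beta\mapsto\alpha\beta^2$. Once the group is known to be cyclic generated by $[\rp^3\times\cp^1]$, your check-on-the-generator argument also works, but the paper's derivation from $d_2$ is independent of the extension analysis. Your $\tau_*$ argument via orientation reversal on $\cp^1$ is fine; the paper instead reads it off from the surjection $\z/8\otimes H_2(\cp^{\infty})\twoheadrightarrow\Omega_5^{\Spin}(\rp^{\infty}\times\cp^{\infty})$, on which $\tau$ acts as $-1$ on the $\z$-factor.
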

\begin{proof}
There is a product map
$$ \varphi \colon \Omega_3^{\Spin}(\rp^{\infty}) \otimes
\Omega_2^{\Spin}(\cp^{\infty}) \to \Omega_ 5^{\Spin}(\rp^{\infty}
\times \cp^{\infty}),$$ induced by the product of manifolds. There
is a corresponding product map on the Atiyah-Hirzebruch spectral
sequences
$$\Phi \colon E^r_{p,q}(1) \otimes E^r_{s,t}(2) \to E^r_{p+s, q+t}(3),$$
where on the $E^{\infty}$-page $\Phi$ is compatible with the
filtrations on the bordism groups and on the $E^2$-page it is just
the cross product map (see \cite[p.~352]{switzer1}). It is easy to see that
 the
Atiyah-Hirzebruch spectral sequence of
$\Omega_2^{\Spin}(\cp^{\infty})$ collapses on the line $p+q=2$. Also since $\Omega_3^{\Spin}(\rp^{\infty}) \cong \Omega_2^{\Pin^-} \cong \z/8$ (by \cite{kirby-taylor2}), we see that  the Atiyah-Hirzebruch spectral sequence of
$\Omega_3^{\Spin}(\rp^{\infty})$ collapses on the line $p+q=3$. From the knowledge of $E^{\infty}_{5,0}(3)$ and  $E^{\infty}_{4,1}(3)$ discussed above, we see there are surjections
$$\Phi \colon E^{\infty}_{3,0}(1) \otimes E^{\infty}_{2,0}(2) \to E^{\infty}_{5,0}(3), \  \Phi \colon E^{\infty}_{2,1}(1) \otimes E^{\infty}_{2,0}(2) \to E^{\infty}_{4,1}(3).$$
Therefore $\varphi$ is surjective. Now
$\Omega_3^{\Spin}(\rp^{\infty}) \cong  \z/8$ is
generated by $[\rp^3, \mathrm{inclusion}]$ and $\Omega_2^{\Spin}(\cp^{\infty})
\cong \Omega_2^{\Spin} \oplus H_2(\cp^{\infty})$.
The group $\Omega_2^{\Spin}$
is generated by $T^2$ with the Lie group spin structure. The product
$\varphi(\rp^3, T^2)=0$, since the map $\rp^3 \times T^2 \to
\rp^{\infty} \times \cp^{\infty}$ factors through $\rp^{\infty}$ and
$\Omega_5^{\Spin}(\rp^{\infty})=0$. Therefore we have a surjection
$$\z/8 \otimes \z \to \Omega_ 5^{\Spin}(\rp^{\infty} \times
\cp^{\infty}).$$ This shows that $\Omega_ 5^{\Spin}(\rp^{\infty}
\times \cp^{\infty}) \cong \z/4$, generated by $\rp^{3} \times
\cp^{1} \hookrightarrow \rp^{\infty} \times \cp^{\infty}$ and $[X,
(\mathrm{id}_{\rp^{\infty}} \times \tau) \circ f]=-[X,f]$. The fact
that a bordism class $[X^5,f]$ equals $\pm 1$ if and only if
 $\langle  \alpha ^3 \,\cup\, \beta, f_ *[X]
\rangle =1 \in \z/2$ comes from the fact that $E^{\infty}_{5,0}$ is dual to $\alpha^{3} \beta$. The
relation $\langle \alpha \,\cup\,
\beta^2 , f_
*[X] \rangle =0$ comes from the fact that the dual of $d_2$ maps $\alpha
\beta$ to $\alpha \beta^2$.
\end{proof}

In general, on the $E^{\infty}$-page of the Atiyah-Hirzebruch spectral sequence for $\Omega_
5^{\Spin}(\rp^{\infty} \times (\cp^{\infty})^r)$, the nontrivial terms
in the line $p+q=5$ are
\begin{eqnarray*}
E^{\infty}_{5,0} & = & \bigoplus_i H_3(\rp^{\infty})\otimes
H_2(\cp^{\infty}_i) \oplus \bigoplus _{i \ne j} H_1(\rp^{\infty})
\otimes H_2(\cp^{\infty}_i) \otimes H_2(\cp^{\infty}_j) \\
& \cong  & (\z/2)^{r+r(r-1)/2}\\
E^{\infty}_{4,1} & = & \bigoplus_i
H_2(\rp^{\infty};\z/2)\otimes H_2(\cp^{\infty}_i;\z/2)  \\
   & \cong &  (\z/2)^r
\end{eqnarray*}

Using the same argument as in Lemma \ref{lem:nonsplit}, we have the
following:
\begin{Proposition}[type II]\label{prop:two}
The bordism group $\Omega_ 5^{\Spin}(\rp^{\infty} \times
(\cp^{\infty})^r)$ is isomorphic to $(\z/4)^r \oplus
(\z/2)^{r(r-1)/2}$. Let $\alpha \in H^1(\rp^{\infty};\z/2)$,
$\beta_i \in H^2(\cp^{\infty}_ i;\z/2)$ be the nonzero elements, $\tau_{i}$ be the involution on $\cp^{\infty}_{i}$ with $\tau_{i*}=-1$ on $H_{2}$,
then
\begin{enumerate}
\item the $\z/2$-factors are determined by the invariants $\langle
\alpha \,\cup\, \beta_i \,\cup\, \beta_j, f_
*[X] \rangle \in \z/2$, with $i, j=1, \cdots r$, and $i>j$,
\item a bordism class $[X,f]$  has component $\pm 1$ in the $i$-th $\z/4$-factor if and only if $\langle \alpha ^3 \,\cup\, \beta _i , f_
*[X] \rangle =1 \in \z/2$, $i=1, \cdots r$,
\item there are relations $\langle \alpha \,\cup\, \beta_i^2 , f_
*[X] \rangle =0$ for all $i$,
\item the action of $\tau_{i}$ on the bordism group is multiplication by $-1$ on the $i$-th $\z/4$-factor and trivial on other factors.
\end{enumerate}
\end{Proposition}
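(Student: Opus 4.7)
The plan is to generalize the argument of Lemma \ref{lem:nonsplit} from $r=1$ to arbitrary $r$. First I would run the Atiyah-Hirzebruch spectral sequence converging to $\Omega_*^{\Spin}(\rp^{\infty}\times(\cp^{\infty})^r)$ with $E^2_{p,q} = H_p(\rp^{\infty}\times(\cp^{\infty})^r;\Omega_q^{\Spin})$, and concentrate on the diagonal $p+q=5$. A K\"unneth computation on $H_*$ identifies the relevant $E^2$-entries; the differential $d_2$ on $E^2_{p,1}\to E^2_{p-2,2}$ is still the dual of $Sq^2\colon H^{p-2}(-;\z/2)\to H^p(-;\z/2)$, and $d_2\colon E^2_{p,0}\to E^2_{p-2,1}$ is the dual of $Sq^2$ composed with mod $2$ reduction, as in the $r=1$ case. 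Using the product basis of monomials in $\alpha$ and the $\beta_i$'s together with the Cartan formula, I would read off that on the $E^3$-page the only surviving contributions to $p+q=5$ are in bidegree $(5,0)$ and $(4,1)$, with ranks respectively $r+r(r-1)/2$ (dual to the classes $\alpha^3\beta_i$ and $\alpha\beta_i\beta_j$ with $i>j$) and $r$ (dual to $\alpha^2\beta_i$).

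Next I would kill $E^3_{1,4}$ by the same trick used in Lemma \ref{lem:nonsplit}: the pullback $\Omega_5^{\Spin}(\rp^{\infty})\to\Omega_5^{\Spin}(\rp^{\infty}\times(\cp^{\infty})^r)$ shows that the generator of $E^3_{1,4}$ must hit zero in $\Omega_4^{\Pin^-}=0$, so the $d_3$-differential into $E^3_{1,4}$ must wipe it out. After $d_3$, there is no further room for differentials in the relevant range, so $E^{\infty}_{5,0}\cong (\z/2)^{r+r(r-1)/2}$ and $E^{\infty}_{4,1}\cong (\z/2)^r$. This gives an exact sequence $0\to (\z/2)^{r+r(r-1)/2}\to\Omega_5^{\Spin}(\rp^{\infty}\times(\cp^{\infty})^r)\to(\z/2)^r\to 0$.

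Then I would resolve the extension exactly as in the $r=1$ case by exploiting the product
\[
\varphi\colon \Omega_3^{\Spin}(\rp^{\infty})\otimes\bigotimes_{i=1}^r\Omega_2^{\Spin}(\cp^{\infty}_i)\longrightarrow\Omega_5^{\Spin}(\rp^{\infty}\times(\cp^{\infty})^r),
\]
together with the analogous product maps with one $\rp^1$-factor replacing $\rp^3$ and two $\cp^1$-factors in place of one. The class $[\rp^3\times\cp^1_i]$ maps onto the generator of the $i$-th copy of $(\z/2)^r$ in $E^{\infty}_{4,1}$ (since the AHSS products respect filtrations and the factors' spectral sequences collapse), and its order in $\Omega_5^{\Spin}$ is at most $4$ because $8\cdot[\rp^3]=0$ in $\Omega_3^{\Spin}(\rp^{\infty})\cong\z/8$ while $\Omega_3^{\Spin}\otimes\Omega_2^{\Spin}$ contributes no new torsion via $T^2$ factors (the map factors through $\Omega_5^{\Spin}(\rp^{\infty})=0$). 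Thus each summand containing a cup with $\alpha^2\beta_i$ lifts to a $\z/4$, giving the claimed $(\z/4)^r$. The remaining $r(r-1)/2$ classes $[\rp^1\times\cp^1_i\times\cp^1_j]$ land in filtration $5$ and, being $2$-torsion by construction (since $\Omega_1^{\Spin}(\rp^{\infty})\cong\z/2$), contribute a direct $(\z/2)^{r(r-1)/2}$ summand. Part (1) and (2) then follow from the dual pairing with the surviving $E^{\infty}$-classes $\alpha^3\beta_i$ and $\alpha\beta_i\beta_j$; part (3) follows from the fact that $\alpha\beta_i^2 = d_2^*(\alpha\beta_i)$, so it dies in $E^{\infty}_{5,0}$ and therefore pairs trivially with every bordism class; and part (4) follows from the naturality of the AHSS under $\tau_i$ together with $\tau_{i*}(\beta_i)=-\beta_i$ in integral cohomology (and fixed on the other factors).

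The main obstacle I anticipate is the bookkeeping for the extension problem: verifying that the $r(r-1)/2$ ``off-diagonal'' classes really have order two and that no further extension couples them to the $\z/4$-summands. I would handle this by choosing the product representatives explicitly so that their images in $E^{\infty}$ are a basis of $E^{\infty}_{5,0}$, and then using the $\z/2$-valued invariants in (1) to split off $(\z/2)^{r(r-1)/2}$ as a direct summand complementary to the subgroup generated by the $[\rp^3\times\cp^1_i]$.
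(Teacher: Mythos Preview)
Your approach matches the paper's, which simply says ``Using the same argument as in Lemma~\ref{lem:nonsplit}.'' One correction: you have the AHSS filtration reversed. Since $E^\infty_{p,q}=F_p/F_{p-1}$, the short exact sequence is
\[
0\longrightarrow (\z/2)^r\ \bigl(=E^\infty_{4,1}\bigr)\longrightarrow \Omega_5^{\Spin}(\rp^\infty\times(\cp^\infty)^r)\longrightarrow (\z/2)^{r+r(r-1)/2}\ \bigl(=E^\infty_{5,0}\bigr)\longrightarrow 0,
\]
so $[\rp^3\times\cp^1_i]$ projects to the $\alpha^3\beta_i$ generator of $E^\infty_{5,0}$, while it is $2[\rp^3\times\cp^1_i]$ that drops into $F_4$ and hits the generator of $E^\infty_{4,1}$ dual to $\alpha^2\beta_i$ (via the product $E^\infty_{2,1}(\rp^\infty)\otimes E^\infty_{2,0}(\cp^\infty_i)\to E^\infty_{4,1}$, since $2[\rp^3]$ lives in filtration $2$ of $\Omega_3^{\Spin}(\rp^\infty)\cong\z/8$). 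This is precisely what forces each such extension to be $\z/4$. Also, $\Omega_1^{\Spin}(\rp^\infty)\cong(\z/2)^2$ rather than $\z/2$, but since you only need $2$-torsion your order-$2$ claim for the off-diagonal classes $[\rp^1\times\cp^1_i\times\cp^1_j]$ survives. With these adjustments the argument goes through.
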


\medskip
\noindent
\underline{Type III:} the normal $2$-type is
$$p\colon  B=\rp^{\infty} \times
(\cp^{\infty})^r \times B\Spin \to BO,$$ where the map on
$\rp^{\infty}$ is the classifying map of the vector bundle $2\eta$.
Therefore the bordism group $\Omega_ 5{(B,p)}$ is the twisted
$\Spin$-bordism group
$$\Omega_ 5^{\Spin}(\rp^{\infty}
\times (\cp^{\infty})^r;p_{1}^*2\eta) = \widetilde{\Omega}
_7^{\Spin}(\Th(p_{1}^*2\eta)).$$

In the Atiyah-Hirzebruch spectral sequence, the $E^2$-terms are
$$E^2_ {p,q}=\w H_ p(\Th(p_{1}^*2\eta);\Omega_q^{\Spin}).$$
 Since $2\eta$
is orientable, we may apply the Thom isomorphism and after a degree
shift $p \mapsto p-2$ we have  $E^2_{p,q}=H_p(\rp^{\infty} \times
(\cp^{\infty})^r;\Omega_q^{\Spin})$. Therefore the $E^2$-terms are
the same as in the type II case, and in the identification of the differentials $d _2$, we need to replace $Sq^{2}$ by $Sq^2+w_ 2(2\eta)$.

As before, we first look at the group $\Omega_{5}^{\Spin}(\rp^{\infty} \times \cp^{\infty};p_{1}^{*}2\eta)$. Clearly $\Omega_{5}^{\Spin}(\rp^{\infty};2\eta) \cong \z/16$ is a direct summand. Besides this, there are two terms on the $E^{\infty}$-page at positions $(5,0)$ and $(4,1)$ respectively, each isomorphic to $\z/2$.  The extension problem is solved in the following lemma.

\begin{Lemma}
We have an isomorphism
$$\Omega _5^{\Spin}(\rp^{\infty}
\times \cp^{\infty};p_{1}^*(2\eta)) \cong \z/4 \oplus \Omega _4^{\Pin^+}.$$
A bordism class $[X,f]$ has component $\pm 1$ in the $\z/4$-factor if and only if $\langle \alpha ^3 \,\cup\, \beta  , f_
*[X] \rangle =1\in \z/2$.  There is a relation $\langle \alpha^{3} \,\cup\, \beta, f_{*}[X]\rangle = \langle \alpha \,\cup\, \beta^{2}, f_{*}[X]\rangle$. The action $\tau_{*}$ of  the involution $\tau$ on $\cp^{\infty}$ is the multiplication by $-1$ on the $\z/4$-factor and trivial on the $\Omega _4^{\Pin^+}$-factor.
\end{Lemma}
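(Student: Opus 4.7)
The plan is to adapt the argument of Lemma~\ref{lem:nonsplit} to the twisted setting. First I would split off the $\Omega_4^{\Pin^+}$ summand: the inclusion of a basepoint $\rp^\infty \hookrightarrow \rp^\infty \times \cp^\infty$ and the projection $p_1$ give a split surjection
\[
\Omega_5^{\Spin}(\rp^\infty \times \cp^\infty; p_1^*(2\eta)) \twoheadrightarrow \Omega_5^{\Spin}(\rp^\infty; 2\eta) \cong \Omega_4^{\Pin^+},
\]
using the isomorphism established earlier in this section. The problem is thereby reduced to identifying the kernel $K$ of this surjection as $\z/4$, locating a generator, verifying the relation, and computing the action of $\tau$.

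For the kernel I would run the Atiyah--Hirzebruch spectral sequence for $\Omega_5^{\Spin}(\rp^\infty \times \cp^\infty; p_1^*(2\eta))$. By the Thom isomorphism for the orientable bundle $p_1^*(2\eta)$, the $E^2$-page coincides with that of the untwisted Lemma~\ref{lem:nonsplit}, but the $d_2$-differentials are dual to $Sq^2 + w_2(p_1^*(2\eta)) = Sq^2 + \alpha^2$ rather than $Sq^2$. The two essential cohomological computations are
\[
(Sq^2+\alpha^2)(\alpha\beta) = \alpha\beta^2 + \alpha^3\beta, \qquad (Sq^2+\alpha^2)(\beta) = \beta^2 + \alpha^2\beta.
\]
Dualizing the first identifies the $E^\infty$-classes dual to $\alpha^3\beta$ and $\alpha\beta^2$, which both yields the announced relation $\langle \alpha^3\beta, f_*[X]\rangle = \langle \alpha\beta^2, f_*[X]\rangle$ and leaves a single surviving $\z/2$-factor in $E^\infty_{5,0}$ outside the $\rp^\infty$-subquotient already split off. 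The second, together with the dualization of the remaining $d_2$-differentials into the $(2,2)$- and $(2,1)$-positions, leaves one further $\z/2$-factor in $E^\infty_{4,1}$. Consequently $K$ has order dividing $4$, with a mod-$2$ invariant detected by $\langle \alpha^3\beta, \cdot\rangle$.

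The principal obstacle is the extension problem: showing that these two surviving $\z/2$-factors assemble into $\z/4$ rather than $(\z/2)^{\oplus 2}$. The direct product argument of Lemma~\ref{lem:nonsplit} is not available here, since $\Omega_3^{\Spin}(\rp^\infty;2\eta) = 0$. I would resolve the extension by an explicit construction: $\rp^5$ admits a canonical Spin structure on $T\rp^5 \oplus 2\eta\vert_{\rp^5}$, because stably $T\rp^5 \oplus 2\eta \cong 8\eta - \epsilon$ and $w_2(8\eta)=\binom{8}{2}\alpha^2 = 0$. Pairing the inclusion $\rp^5 \hookrightarrow \rp^\infty$ with a map $\rp^5 \to \cp^\infty$ classifying the non-trivial integral class in $H^2(\rp^5;\z) = \z/2$, and subtracting the constant-map version (which lies in the $\Omega_4^{\Pin^+}$-summand), yields an element $g \in K$ satisfying $\langle \alpha^3\beta, g\rangle = 1$. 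I would then verify $2g\neq 0$ either by detecting the surviving $E^\infty_{4,1}$-invariant on $2g$, or by a comparison map from the untwisted case of Lemma~\ref{lem:nonsplit}; this is the step I expect to demand the most care. Finally, for the $\tau$-action: on the $\Omega_4^{\Pin^+}$-summand $\tau$ acts trivially since this summand is detected by classes pulled back from $\rp^\infty$, while on $K$ the involution $\tau$ negates $\beta \in H^2(\cp^\infty;\z)$ and therefore sends the $\z/4$-class of $g$ to $-g$; this sign is invisible to the mod-$2$ invariant $\langle \alpha^3\beta, \cdot\rangle$ (consistent with $-1 \equiv 1 \pmod 2$) but is encoded in the non-trivial extension on $K$.
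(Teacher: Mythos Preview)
Your overall framework—split off the $\Omega_4^{\Pin^+}$-summand via the retraction to $\rp^\infty$, then run the twisted Atiyah--Hirzebruch spectral sequence to obtain the relation $\langle\alpha^3\beta,\cdot\rangle = \langle\alpha\beta^2,\cdot\rangle$ and to see that the kernel $K$ has order $4$—matches the paper. The divergence is precisely at the two points you yourself flag as delicate, and in both places the paper does something genuinely different.

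For the extension, the paper abandons geometry entirely and computes the relevant portion of the Adams spectral sequence for $\mathrm{Th}(p_1^*(2\eta)) \wedge M\Spin$, reading off $K \cong \z/4$ directly from the $E_2$-page. Neither of your suggested routes works as stated: ``detecting the $E^\infty_{4,1}$-invariant on $2g$'' \emph{is} the extension question—there is no independent homomorphism defined on $F_4$ that you can evaluate without already knowing how $F_5/F_4$ sits over it—and there is no natural comparison map from the untwisted bordism group of Lemma~\ref{lem:nonsplit} that would transport the non-split extension across the twist.

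Your treatment of the $\tau$-action contains a genuine gap. Your candidate $g$ is built from a map $\phi\colon \rp^5 \to \cp^\infty$ classifying the non-trivial class in $H^2(\rp^5;\z) \cong \z/2$. Since this group is $2$-torsion, $\tau\circ\phi$ and $\phi$ pull back the same integral class and are therefore homotopic, giving $\tau_* g = g$. The assertion that ``$\tau$ negates $\beta$ and therefore sends $g$ to $-g$'' is thus a non-sequitur for this particular $g$: there is no sign to see on a $2$-torsion target. The paper instead exhibits $X^5(0) = S(2\eta \oplus \mathbb R)$ over $\rp^3$ (verified to be a generator of the $\z/4$-factor via $\langle\alpha^3\beta,\cdot\rangle$) and proves $\tau_* = -1$ by producing the fiberwise antipodal map $r$ as an orientation-reversing self-diffeomorphism satisfying $(\mathrm{id},\tau)\circ f = f\circ r$. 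That geometric input—an honest orientation-reversing automorphism intertwining $f$ with its $\tau$-twist—is what your sketch is missing.
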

\begin{proof}
From the above discussion we have
$$\Omega _5^{\Spin}(\rp^{\infty}
\times \cp^{\infty};p_{1}^*(2\eta)) \cong G \oplus \Omega _4^{\Pin^+},$$
where the order of $G$ is $4$.
To determine $G$, the geometric argument in Lemma \ref{lem:nonsplit} doesn't work since now we have $\Omega_{3}^{\Spin}(\rp^{\infty};2\eta) \cong \Omega_{2}^{\Pin^{+}} =0$. Thus we turn to consider the Adams spectral sequence for $\w \Omega_{t-s-2}^{\Spin}(\rp^{\infty}
\times \cp^{\infty};p_{1}^*(2\eta))=\pi^{S}_{t-s}(Th(p_{1}^*(2\eta))\wedge MSpin))$ at prime $2$:
$$Ext^{s,t}_{\mathcal A}(H^{*}(Th(p_{1}^*(2\eta))\wedge MSpin; \mathbb F_{2}); \mathbb F_{2}) \Rightarrow \pi^{S}_{t-s}(Th(p_{1}^*(2\eta))\wedge MSpin))/\textrm{non $2$-torsion},$$
where $\mathcal A$ is the mod $2$ Steenrod algebra.
We have
$$H^{*}(Th(p_{1}^*(2\eta))\wedge MSpin;\mathbb F_{2}) \cong \w H^{*}( Th(p_{1}^*(2\eta);\mathbb F_{2}) \otimes_{\mathbb F_{2}} H^{*}(MSpin;\mathbb F_{2}),$$
where $\w H^{*}( Th(p_{1}^*(2\eta);\mathbb F_{2})$ is a free $\mathbb F_{2}[t,x]$-module on one generator $u_{2}$ of degree $2$ (the Thom class), where $\deg t=1$ and $\deg x=2$, and
$$Sq(u_{2})=u_{2} + t^{2}u_{2}.$$
From this we may write down the $\mathcal A$-module structure of   $H^{*}(Th(p_{1}^*(2\eta))\wedge MSpin;\mathbb F_{2})$ in degree$\le 9$, produce a minimal free $\mathcal A$-resolution of $H^{*}(Th(p_{1}^*(2\eta))\wedge MSpin;\mathbb F_{2})$ which corresponds to the $E_{2}$-term of the spectral sequence. In practice, we may ignore the pure terms from $\rp^{\infty}$, since we already know the contribution of $\rp^{\infty}$ is a $\z/16$-summand.

In low degrees, the $E_{2}$-page of the spectral sequence is
depicted as follows (with horizontal index $t-s$ and vertical index
$s$. The calculation is confirmed by Olbermann and Abczynski using a
computer program developed by Bruner):
\begin{center}

\sseqxstep=1 \sseqystep=1

\def\sseqgridstyle{\ssgriddots}
\sseqxstart=4
\sseqentrysize=.8cm
\def\sseqpacking{\sspackhorizontal}
\begin{sseq}{6}{6}
\ssmoveto 4 0  \ssdropbull \ssline {0}{1}\ssdropbull
\ssline {0}{1}\ssdropbull \ssline {0}{1} \ssdropbull
\ssline {0}{1} \ssdrop{\vdots}

\ssmoveto 5 0  \ssdropbull

\ssmoveto 6 1  \ssdropbull \ssline {0}{1}\ssdropbull
\ssline {0}{1}\ssdropbull \ssline {0}{1} \ssdropbull
\ssline {0}{1} \ssdrop{\vdots}

\ssmoveto 7 0  \ssdropbull \ssline {0}{1} \ssdropbull

\ssmoveto 8 0 \ssdropbull
\end{sseq}
\end{center}
This shows that $G \cong \z/4$.

The fact that the generators of the $\z/4$-factor are detected by  the
invariant $\langle \alpha ^3 \,\cup\, \beta  , f_
*[X] \rangle \in \z/2$ and the relation $\langle \alpha^{3} \,\cup\, \beta, f_{*}[X]\rangle = \langle \alpha \,\cup\, \beta^{2}, f_{*}[X]\rangle$ are seen from the Atiyah-Hirzebruch spectral sequence, as in the type II case. From this, we claim that $[X^{5}(0),f]$ represents a generator of $\z/4$, where
$$f \colon X^{5}(0) \to \rp^{\infty} \times \cp^{\infty}$$ is a normal $2$-smoothing. To see this, recall that
$$X^{5}(0)=(\rp^{5}-S^{1}\times D^{4}) \,\cup_{\partial}\, (\rp^{5}-S^{1}\times D^{4}),$$
and $\rp^{5}-S^{1}\times D^{4}$ is the disc bundle $D(2\eta)$ over $\rp^{3}$. Therefore $X^{5}(0)$ is actually the sphere bundle $S(2\eta \oplus \mathbb R)$. The cohomology groups are easily computed and we see that $\langle \alpha ^3 \,\cup\, \beta  , f_
*[X] \rangle =1$.

Now let $r \colon X^{5}(0) \to X^{5}(0)$ be the fiberwise antipodal map, we have a commutative diagramm
$$\xymatrix{
X^{5}(0) \ar[r]^{f} \ar[d]^{r} & \rp^{\infty} \times \cp^{\infty} \ar[d]^{(\mathrm{id}, \tau)} \\
X^{5}(0) \ar[r]^{f} & \rp^{\infty} \times \cp^{\infty}. \\}$$
Since $r$ is orientation reversing, we conclude that the action of $\tau$ on the $\z/4$ factor is multiplication by $-1$. It's also clear that the action of $\tau$ on the $\Omega _4^{\Pin^+}$ is trivial.
\end{proof}

In the general situation, the calculation is similar, and we have

\begin{Proposition}[type III]\label{prop:three}
The bordism group
$ \Omega _5^{\Spin}(\rp^{\infty}
\times (\cp^{\infty})^r;p_{1}^*(2\eta))$ is isomorphic to   $(\z/4)^r \oplus (\z/2)^{r(r-1)/2} \oplus \Omega _4^{\Pin^+}$. Furthermore,
\begin{enumerate}
\item the $\z/2$-factors are determined by the invariants $\langle \alpha \,\cup\,
\beta_i \,\cup\, \beta_j, f_
*[X] \rangle \in \z/2$, with $i, j=1, \cdots r$, and $i>j$,
\item a bordism class $[X,f]$ has component $\pm 1$ in the $i$-th $\z/4$-factor if and only if  $\langle \alpha ^3 \,\cup\, \beta _i , f_
*[X] \rangle =1 \in \z/2$, $i=1, \cdots r$,
\item there are relations $\langle \alpha \,\cup\, \beta_i^2 , f_
*[X] \rangle = \langle \alpha ^3 \,\cup\, \beta_i, f_*[X] \rangle $ for all $i$,
\item  the action $\tau_{i}$ on the bordism group is the multiplication by $-1$ on the $i$-th $\z/4$-factor and trivial on other factors.

\end{enumerate}
\end{Proposition}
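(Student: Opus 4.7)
The plan is to adapt the argument just completed for the single-$\cp^{\infty}$ case to the multi-factor setting, using Proposition \ref{prop:two} (type II) as a template.

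First I would run the Atiyah-Hirzebruch spectral sequence for $\w\Omega_{p+q+2}^{\Spin}(\Th(p_{1}^*(2\eta)))$; after the Thom isomorphism the $E^2$-page becomes $H_p(\rp^{\infty} \times (\cp^{\infty})^r; \Omega_q^{\Spin})$, identical to the type II case, the only change being that $d_2$ is now dual to $Sq^2 + \alpha^2$ with $\alpha^2 = w_2(2\eta)$. On the line $p+q=5$ the surviving classes come in three families: pure $\rp^{\infty}$-classes (which split off as $\Omega_5^{\Spin}(\rp^{\infty};2\eta) \cong \Omega_4^{\Pin^+}$ via the projection to $\rp^{\infty}$); the $r$ ``diagonal'' pairs consisting of the $(5,0)$-class dual to $\alpha^3\beta_i$ together with the $(4,1)$-class dual to $\alpha^2\beta_i$; and the $\binom{r}{2}$ ``off-diagonal'' $(5,0)$-classes dual to $\alpha\beta_i\beta_j$ with $i>j$. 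Since $(Sq^2 + \alpha^2)(\alpha\beta_i) = \alpha\beta_i^2 + \alpha^3\beta_i$, the dual differential immediately gives the relation $\langle \alpha \,\cup\, \beta_i^2, f_*[X]\rangle = \langle \alpha^3 \,\cup\, \beta_i, f_*[X]\rangle$ of part (3).

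The main obstacle is the extension problem. For each diagonal pair I would pull back along the projection $\pi_i \colon \rp^{\infty} \times (\cp^{\infty})^r \to \rp^{\infty} \times \cp^{\infty}_i$ and invoke the single-$\cp^{\infty}$ lemma established just above, whose Adams spectral sequence argument shows that each such pair assembles into a $\z/4$-summand detected mod $2$ by $\langle \alpha^3\beta_i, f_*[X]\rangle$, yielding (1) and (2). The off-diagonal classes have no $(4,1)$-partner to extend with once the $\rp^{\infty}$-contribution is split off, and therefore contribute pure $\z/2$-summands; a rank count against the type II answer of Proposition \ref{prop:two}, whose non-$\rp^{\infty}$ part has the identical $E^{\infty}$-configuration, pins down the decomposition as $(\z/4)^r \oplus (\z/2)^{r(r-1)/2} \oplus \Omega_4^{\Pin^+}$. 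If one wants to avoid this indirect step, one can instead extend the Adams spectral sequence calculation of the single-factor case to the spectrum $\Th(p_{1}^*(2\eta)) \wedge M\Spin$ over the multi-factor base, using that its mod-$2$ cohomology is the free $\mathbb{F}_2[t,x_1,\dots,x_r]$-module on the Thom class $u_2$ with $Sq(u_2) = u_2 + t^2 u_2$.

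For (4), the action of $\tau_i$, I would recycle the geometric argument from the single-$\cp^{\infty}$ proof applied to the model $X^5(0)$ and its fiberwise antipodal map $r \colon X^5(0) \to X^5(0)$, choosing the classifying map to factor through $\rp^{\infty} \times \cp^{\infty}_i$. Orientation reversal of $r$ together with the commutative square $(\mathrm{id}\times\tau_i) \circ f = f \circ r$ forces $\tau_{i*} = -1$ on the $i$-th $\z/4$-summand. For $j \ne i$ the $j$-th generator factors through $\rp^{\infty}\times \cp^{\infty}_j$, on which $\tau_i$ acts trivially; the off-diagonal $\z/2$-summands are automatically $\tau_i$-invariant since $+1 = -1$ in $\z/2$; and $\tau_i$ acts trivially on the $\Omega_4^{\Pin^+}$-summand because this summand is pulled back from $\rp^{\infty}$, which $\tau_i$ does not touch.
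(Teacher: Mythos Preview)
Your proposal is correct and follows the same approach as the paper: the paper's own ``proof'' of this proposition is literally the one-line remark ``In the general situation, the calculation is similar,'' deferring everything to the preceding $r=1$ lemma and to the type~II template of Proposition~\ref{prop:two}. You have simply written out what ``similar'' means---running the AHSS with $d_2$ dual to $Sq^2+\alpha^2$, splitting off $\Omega_4^{\Pin^+}$ via the projection to $\rp^{\infty}$, importing the $\z/4$-extensions from the single-factor Adams computation via the projections $\pi_i$, and reusing the $X^5(0)$ argument for the $\tau_i$-action---so there is nothing to add.
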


\medskip
\noindent
\underline{Type I:} recall that the normal $2$-type is
$$p\colon  B=\rp^{\infty} \times
(\cp^{\infty})^r \times B\Spin \to BO,$$ where the map $p$ on the
first $\cp^{\infty}$ is the classifying map of the vector bundle
$\gamma$. Therefore the bordism group $\Omega_ 5{(B,p)}$ is the
twisted $\Spin$-bordism group
$$\Omega _5^{\Spin}(\rp^{\infty}
\times (\cp^{\infty})^r;p_{2}^*\gamma) = \widetilde{\Omega}_
7^{\Spin}(\Th(p_{2}^*\gamma)).$$

As before we apply the Thom isomorphism and the
$E^2$-terms in the Atiyah-Hirzebruch spectral sequence are
$E^2_{p,q}=\w H_p(\rp^{\infty} \times
(\cp^{\infty})^r;\Omega_q^{\Spin})$, where in the identification of the differentials $d _2$, we replace $Sq^{2}$ by $Sq^2+w_ 2(\gamma)$. The calculation is analogous to the type II case.

\begin{Proposition}[type I]\label{prop:four}
The bordism group $\Omega_{5}^{\Spin}(\rp^{\infty} \times (\cp^{\infty})^{r}; p_{2}^{*}\gamma)$ is isomorphic to $(\z/4)^{r-1} \oplus (\z/2)^{r(r-1)/2} \oplus \Omega_ 4^{\Pin^c}$.
Furthermore,
\begin{enumerate}
\item the $\z/2$-factors are determined by the invariants $\langle \alpha \,\cup\,
\beta_i \,\cup\, \beta_j, f_
*[X] \rangle \in \z/2$, with $i, j=1, \cdots r$, and $i>j$,
\item a bordism class $[X,f]$ has component $\pm 1$ in the $i$-th $\z/4$-factor if and only if $\langle \alpha ^3 \,\cup\, \beta _i , f_
*[X] \rangle =1  \in \z/2$, $i=2, \cdots ,  r$,
\item there are relations $\langle \alpha^5 + \alpha^3 \,\cup\, \beta_1, f_*[X] \rangle =0$ and $\langle \alpha \,\cup\, \beta_i^2 , f_
*[X] \rangle = \langle \alpha \,\cup\, \beta_1 \,\cup\, \beta_i, f_*[X] \rangle $ for all $i$,
\item the action $\tau_{i}$ ($ i \ge 2$) on the bordism group is the multiplication by $-1$ on the $i$-th $\z/4$-factor and trivial on other factors.

\end{enumerate}
\end{Proposition}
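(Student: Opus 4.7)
The plan is to follow the strategy of Propositions \ref{prop:two} and \ref{prop:three}, with the twist by $p_2^*\gamma$ replacing the trivial (respectively, $p_1^*(2\eta)$) twist. After applying the Thom isomorphism, the $E^2$-page of the Atiyah-Hirzebruch spectral sequence becomes $E^2_{p,q} = H_p(\rp^\infty \times (\cp^\infty)^r; \Omega_q^{\Spin})$, identical to the type II $E^2$-page. The only change is that in the identification of the $d_2$-differentials on the bottom two rows, $Sq^2$ is replaced by $Sq^2 + \beta_1 \smile$, where $\beta_1 = w_2(p_2^*\gamma) \in H^2(\cp_1^\infty;\z/2)$.

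First I compute $(Sq^2 + \beta_1 \smile)$ on the relevant classes in degrees $2$ and $3$. The key identities are $(Sq^2 + \beta_1)(\alpha^3) = \alpha^5 + \alpha^3\beta_1$, $(Sq^2 + \beta_1)(\alpha\beta_i) = \alpha\beta_i^2 + \alpha\beta_1\beta_i$ for $i \geq 2$, and $(Sq^2 + \beta_1)(\alpha\beta_1) = \alpha\beta_1^2 + \beta_1 \smile \alpha\beta_1 = 0$. Dually these produce exactly the claimed relations $\langle \alpha^5 + \alpha^3\beta_1, f_*[X]\rangle = 0$ and $\langle \alpha\beta_i^2, f_*[X]\rangle = \langle \alpha\beta_1\beta_i, f_*[X]\rangle$ (the latter being trivial for $i=1$). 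Reading off the surviving classes on the line $p+q = 5$ of $E^\infty$ now proceeds as in the type II calculation: the $\z/2$-contributions from $\alpha^3\beta_i$ for $i \geq 2$ yield $r-1$ classes (the $i=1$ class being identified with $\alpha^5$ and absorbed by the $\cp_1^\infty$-column), and $\alpha\beta_i\beta_j$ for $i > j$ yields $r(r-1)/2$ classes. The remaining contribution, involving only the $\cp_1^\infty$-factor, is packaged by the already-established isomorphism $\Omega_5^{\Spin}(\rp^\infty \times \cp_1^\infty; p_2^*\gamma) \cong \Omega_4^{\Pin^c}$, which splits off as a direct summand by naturality with respect to the projection $\rp^\infty \times (\cp^\infty)^r \to \rp^\infty \times \cp_1^\infty$ collapsing the factors $\cp_j^\infty$ for $j \geq 2$.

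Next I solve the extension problems for the $\z/4$-summands ($i \geq 2$) by the same geometric product construction used in Lemma \ref{lem:nonsplit}. Since the pull-back of $p_2^*\gamma$ to $\rp^\infty \times \cp_i^\infty$ is trivial for $i \geq 2$, the product $[\rp^3, \mathrm{inclusion}] \times [\cp^1]$ is a well-defined element of $\Omega_5^{\Spin}(\rp^\infty \times (\cp^\infty)^r; p_2^*\gamma)$ of order $4$, detected by $\langle \alpha^3\beta_i, f_*[X]\rangle = 1$. The $\tau_i$-action for $i \geq 2$ is then determined geometrically, exactly as in the type II and III cases: composing the classifying map with $\tau_i$ reverses $[\cp^1] \in \Omega_2^{\Spin}(\cp_i^\infty)$ and hence multiplies the $i$-th $\z/4$-generator by $-1$; the $\z/2$-summands are $2$-torsion and unaffected, while the $\Omega_4^{\Pin^c}$-summand comes from the $\cp_1^\infty$-direction which is fixed by $\tau_i$.

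The main obstacle, as in Propositions \ref{prop:two} and \ref{prop:three}, is verifying that the extensions are nontrivial of order $4$ and correctly isolating the $\Omega_4^{\Pin^c}$-summand. Both issues reduce to already-known cases: the $\z/4$-extensions for $i \geq 2$ follow the type II geometric argument (using that $p_2^*\gamma$ is trivial when restricted to $\cp_i^\infty$ for $i \geq 2$), and the $\Omega_4^{\Pin^c}$-summand is identified via the earlier characteristic-submanifold isomorphism together with the split retraction onto the $r=1$ case.
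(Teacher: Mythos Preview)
Your proposal is correct and follows essentially the same approach as the paper, which simply asserts that ``the calculation is analogous to the type II case'' after noting that the Thom isomorphism identifies the $E^2$-page with the untwisted one and that the differentials $d_2$ are now governed by $Sq^2 + w_2(\gamma)$. You have supplied precisely the details the paper leaves implicit---the explicit computation of $(Sq^2+\beta_1)$ on the degree $2$ and $3$ classes, the splitting of the $\Omega_4^{\Pin^c}$-summand via the retraction onto $\rp^\infty\times\cp_1^\infty$, and the reduction of the $\z/4$-extensions for $i\ge 2$ to Lemma~\ref{lem:nonsplit} using the triviality of $p_2^*\gamma$ on those factors.
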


\section{Proofs of the Main Results}\label{sec:five}
In this section we prove Theorem \ref{thm:one} and Theorem \ref{thm:three}. From the point of view of Propositioni \ref{prop:one}, the key point to prove Theorem \ref{thm:one} is to show that for manifolds having the same invariants stated in the theorem, we can find  appropriate normal $2$-smoothings in $B$, such that they are bordant in $\Omega_{5}{(B,p)}$. (In some applications, this is done by understanding  the action of the group of fiber homotopy equivalences $Aut(B,p)$ on $\Omega_{n}{(B,p)}$. But in our situation, we find it more practical to find the smoothings directly.)
\begin{Lemma}\label{lemma:basis}
Let $M^5$ be a \abelian manifold with $\pi_1 (M) \cong \z/2$ and $H
_2(M) \cong \z^r$. Let $t \in H^1(M;\z/2)$ be the nonzero element,
and let $\{t^2, x_1, \cdots ,x_r \}$ be a basis of $H^2(M;\z/2)$. Then
$\{t^3, tx_1, \cdots, tx_r\}$ is a basis of $H^3(M;\z/2)$.
\end{Lemma}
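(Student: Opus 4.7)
The plan is to reduce the lemma to showing that cup product with $t$ is an isomorphism $\cup t \colon H^2(M;\z/2) \to H^3(M;\z/2)$. Since the claimed basis $\{t^3, tx_1, \dots, tx_r\}$ is the image under $\cup t$ of the basis $\{t^2, x_1, \dots, x_r\}$ of $H^2(M;\z/2)$, an isomorphism guarantees it is a basis. The dimension count $\dim_{\z/2} H^2(M;\z/2) = \dim_{\z/2} H^3(M;\z/2) = r+1$ is immediate from the universal coefficient theorem, given $H_1(M) = \z/2$, $H_2(M)$ free of rank $r$, and $H_3(M;\z) \cong H^2(M;\z) \cong \z^r \oplus \z/2$ by Poincar\'e duality. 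Hence it suffices to show $\cup t$ is injective.

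To establish injectivity I would invoke the Gysin long exact sequence of the universal double cover $p \colon \widetilde M \to M$ classified by $t$,
\[
\cdots \to H^{n-1}(M;\z/2) \xrightarrow{\cup t} H^n(M;\z/2) \xrightarrow{p^*} H^n(\widetilde M;\z/2) \xrightarrow{\tau} H^n(M;\z/2) \xrightarrow{\cup t} H^{n+1}(M;\z/2) \to \cdots,
\]
where $\tau$ is the transfer. Exactness at the second copy of $H^n(M;\z/2)$ identifies $\ker(\cup t \colon H^2(M;\z/2) \to H^3(M;\z/2))$ with $\mathrm{im}(\tau)$, and exactness at $H^2(\widetilde M;\z/2)$ says this image vanishes precisely when $p^* \colon H^2(M;\z/2) \to H^2(\widetilde M;\z/2)$ is surjective.

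The surjectivity of $p^*$ on $H^2$ is the one place where the fibered type hypothesis is essential. By hypothesis $\pi_1(M)$ acts trivially on $\pi_2(M) = H_2(\widetilde M;\z)$, and the coinvariants argument used in Section~\ref{sec:four} then shows that $p_* \colon H_2(\widetilde M;\z) \to H_2(M;\z)$ is an isomorphism. Combined with $H_1(\widetilde M;\z) = 0$, the naturality of the universal coefficient sequence
\[
0 \to \mathrm{Ext}(H_1(-),\z/2) \to H^2(-;\z/2) \to \mathrm{Hom}(H_2(-),\z/2) \to 0
\]
gives a commutative square in which the right vertical map is an isomorphism and the left vertical targets $0$; a short diagram chase yields the required surjectivity of $p^*$, with one-dimensional kernel spanned by $t^2$. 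The point requiring most care is setting up the Gysin sequence for a double cover correctly (recognizing $t$ as the $w_1$ of the associated real line bundle); after that, the argument is a routine dimension count and diagram chase.
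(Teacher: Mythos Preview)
Your argument is correct and complete. It takes a genuinely different route from the paper's proof, though both hinge on the same underlying fact.

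The paper argues via the Leray--Serre spectral sequence of the fibration $\widetilde M \to M \to \rp^{\infty}$: a dimension count (using $\dim H^2(M;\z/2)=r+1$ versus $\dim H^2(\widetilde M;\z/2)=r$) forces the transgression $d_3\colon E_3^{0,2}\to E_3^{3,0}$ to vanish, whence by multiplicativity the classes $t^3, tx_1,\dots,tx_r$ survive to $E_\infty$ and fill out $H^3(M;\z/2)$. You instead run the Gysin/transfer long exact sequence of the double cover $\widetilde M\to M$, reducing the question to surjectivity of $p^*\colon H^2(M;\z/2)\to H^2(\widetilde M;\z/2)$, which you deduce from the fibered type hypothesis via the universal coefficient theorem. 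The two arguments are essentially dual packagings of the same input (indeed, the paper's vanishing differential is precisely equivalent to your surjectivity of $p^*$, since $E_2^{0,2}=H^2(\widetilde M;\z/2)^{\pi_1}$ and the edge map is $p^*$). Your version has the advantage of avoiding spectral sequence bookkeeping altogether and making the role of the fibered type hypothesis more transparent; the paper's version, on the other hand, would generalize more readily if one wanted analogous statements in higher degrees or for larger cyclic fundamental groups.
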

\begin{proof}
Consider the Leray-Serre cohomology spectral sequence for the
fibration $\widetilde{M} \to M \to \rp^{\infty}$ with
$\z/2$-coefficients. Note that $\dim H^2(M;\z/2) =r+1$ and $\dim
H^2(\widetilde{M};\z/2) =r$. This implies that the differential $$d_
2\colon  E_ 2^{0,2}=H^2(\widetilde{M};\z/2) \to E
_3^{3,0}=H^3(\rp^{\infty};\z/2)$$ must be  trivial. Therefore, the
elements $t^3, tx_1, \cdots , tx_r$ all survive to form a basis of
$H^3(M;\z/2)$.
\end{proof}

\medskip
\begin{proof}[{The proof of Theorem  \ref{thm:one}}]
First of all, by Lemma \ref{lem:Pbordant}, we see that in the type
II and III cases, $[P] \in \Omega_4^{\Pin^\pm}/\{\pm 1\}$
 is an invariant for
$M$. Since we don't have a statement for $\Pin^c$, we will give an
alternative argument below for the type I case.

Let $f \colon  M^5 \to \rp^{\infty}$ be the classifying map of
$\pi_{1}$, $t=f^*\alpha \in H^{1}(M;\z/2)$. Consider the
nondegenerate symmetric bilinear form
$$\lambda \colon H^2(M;\z/2) \times H^2(M;\z/2)
\stackrel{\,\cup\,}{\rightarrow} H^4(M;\z/2) \stackrel{\,\cup\,
t}{\rightarrow} H^5(M;\z/2) \cong \z/2.$$

\smallskip
\noindent
\underline{Type II:} note that since $\langle t^{5}, [M] \rangle =
\langle \alpha^{5}, f_{*}[M] \rangle$ and
$\Omega_{5}^{\Spin}(\rp^{\infty})=0$, we have $\lambda(t^2,t^2)=0$.
From this and the relations in Proposition \ref{prop:two} we see
that $\lambda (x,x)=0$ for all $x$. Therefore we may extend $t^{2}$
to a symplectic basis of $\lambda$, $\{t^2, u_1, \cdots , u_r\}$.
Especially we have $\lambda(t^2, u_1)=1$, $\lambda(u_{1}, u_{1})=0$
and $\lambda(t^2, u_i)=\lambda(u_{1}, u_{i})=0$ for $i>1$. Now let
$u_i'=u_i+u_1$ for $i>1$, then $\lambda(t^2, u_i')=1$ for all $i$
and $\lambda(u_i', u_j')=\lambda(u_i, u_j)$. We may lift $\{u_1',
\cdots , u_r'\}$ to a basis of the free part of $H^2(M)$ and get a
map $M \to (\cp^{\infty})^{r}$.  Together with the canonical map $f
\colon M \to \rp^{\infty}$ and the classifying map of a
Spin-structure $M \to B\Spin$, we obtain a normal $2$-smoothing
$\bar{\nu} \colon M \to B=\rp^{\infty} \times (\cp^{\infty})^{r}
\times B\Spin$.

Now suppose $M'$ is another manifold, with a normal $2$-smoothing
$\bar{\nu}'$ constructed as above, then by Proposition
\ref{prop:two}, (composing $\bar{\nu}'$ with some $\tau_{i}$ to
interchange $\pm 1$ in the $\z/4$-factors if necessary) $[M,
\bar{\nu}] = [M', \bar{\nu}'] \in \Omega_{5}^{\Spin}(\rp^{\infty}
\times (\cp^{\infty})^{r})$. Proposition \ref{prop:one} implies that
they are diffeomorphic.

\smallskip

For the other two cases, the procedure of finding an appropriate map
to $(\cp^{\infty})^r$ is similar, thus we will omit the details.

\smallskip
\noindent
\underline{Type III:} first note that by the relation in Proposition
\ref{prop:three}, for all $x \in H^2(M;\z/2)$, $\lambda(t^2,
x)=\lambda(x,x)$. There are two different cases:
\begin{enumerate}
\item if $\lambda(t^2, t^2)=0$: then there exists a $u_1$ such that
$\lambda(t^2, u_1)=1$. On the orthogonal complement of
$\mathrm{span}(t^2, u_1)$, we have $\lambda(x,x)=0$, thus there
exists a symplectic basis $\{u_2, \cdots, u_r\}$. Then the argument
is the same as in the previous case.

\item if $\lambda(t^2, t^2)=1$: let $U$ be the orthogonal complement of
$\mathrm{span}(t^2)$, then $\lambda(x,x)=\lambda(t^2, x)=0$ for all $x \in U$.
There exists a symplectic basis of $U$, $\{u_1, \cdots, u_r\}$. Let $u_i'=u_i+t^2$,
then $\lambda(t^2,u_i')=1$ for all $i$ and $\lambda(u_i', u_j')=\lambda(u_i,u_j)+1$.
The remaining argument is the same as in the previous case.
\end{enumerate}

For $M$ and $M'$ having the same $\rk H_2=r$, like in the
type II case, we may use maps to $(\cp^{\infty})^r$ constructed
above to make the corresponding bordism classes have equal $(\z/4)^r
\oplus (\z/2)^{r(r-1)/2}$-component. Now if $M$ and $M'$ have
$[P]=[P'] \in \Omega_4^{\Pin^+}/\pm$, then by choosing an
appropriate Spin-structure on $TM \oplus f^*(2\eta)$, we may make
the $\Omega_4^{\Pin^+}$-component equal.
\smallskip

\smallskip
\noindent
\underline{Type I:} let $u_1=w_2(M)$. By the relation in Proposition
\ref{prop:four}, for all $x \in H^2(M;\z/2)$, $\lambda(u_1,
x)=\lambda(x, x)$. To find the map to $(\cp^{\infty})^r$, we have
four cases:

\begin{enumerate}
\item if $\lambda(t^2, t^2)=1$ and $\lambda(u_1, u_1)=0$: then $\lambda$ is nondegenerate
on $\mathrm{span}(t^2,u_1)$. Let $U$ be the orthogonal complement of
$\mathrm{span}(t^2,u_1)$, then and for all $x \in U$
 $\lambda(x,x)=0$. There
exists a symplectic basis $\{u_2, \cdots, u_r\}$.

\item if $\lambda(t^2,t^2)=0$ and $\lambda(u_1, u_1)=0$: then exists a $u_2$ such that
$\lambda(u_1, u_2)=1$ and $\lambda(t^2, u_2)=0$. $\lambda$ is
nondegenerate on $\mathrm{span}(u_1, u_2)$. On the orthogonal
complement we have $\lambda(x,x)=0$. Therefore there is a symplectic
basis $\{t^2, u_3, \cdots, u_r\}$.

\item if $\lambda(t^2, t^2)=1$ and $\lambda(u_1, u_1)=1$: let $U$
be the orthogonal complement of $\mathrm{span}(u_1)$, then there
exists a symplectic basis $\{u_2, u_3, \cdots, u_r\}$ for $U$ and we
may choose $u_2=t^2 + u_1$.

\item if $\lambda(t^2, t^2)=0$ and $\lambda(u_1, u_1)=1$: then on the orthogonal
complement of $\mathrm{span}(x_1)$ there is a symplectic basis
$\{t^2, u_2, \cdots, u_r\}$.
\end{enumerate}

Now we need to consider the $\Omega_4^{\Pin^c}$-component. Note that
since the manifolds in the list given in Theorem \ref{thm:three}
exhaust all possible values of $\mathrm{rank}H_2$ and $[P]$, an $M$
of type I must be diffeomorphic to some manifolds in the list. Now
we just need to show that the manifolds in the list are not
diffeomorphic to each other.

The $s$-component of $[P]\in \Omega_4^{\Pin^c}$ is determined by
$w_2(P)^2$, therefore varying $\Pin^c$-structures on $P^4$ will not
change the $s$-component. Thus we see that the two subfamilies
$$X^5(q) \scs(S^2\times \rp^3) \scs ((\sharp_k S^2\times S^2) \times
S^1)$$ and
$$X^5(q) \scs(\cp^2 \times S^1) \scs ((\sharp_k S^2\times S^2) \times
S^1)$$ don't have coincidence.

Let $Q^4$ be a characteristic submanifold of $X^5(q)$, then a
characteristic submanifold of $X^5(q) \scs(S^2\times \rp^3) \scs
((\sharp_k S^2\times S^2) \times S^1)$ can be taken as $P=Q \sharp
(S^2 \times \rp^2) \sharp (S^2 \times S^2)$. We have $[S^2 \times
\rp^2]=[S^2 \times S^2]=0 \in \Omega_4^{\Pin^c}$. So we see $[P]= q
\in \Omega_4^{\Pin^c}/\pm$ and different $q$'s give
non-diffeomorphic $X^5(q) \scs(S^2\times \rp^3) \scs ((\sharp_k
S^2\times S^2) \times S^1)$. Similar for the manifolds $X^5(q)
\scs(\cp^2 \times S^1) \scs ((\sharp_k S^2\times S^2) \times S^1)$,
since $[\cp^2]=(0,1) \in \Omega_4^{\Pin^c}$.

\end{proof}

The relations among the invariants are essentially seen in the
previous proof, but there is a more conceptual way to see this.

\begin{proof}[The proof of Theorem  \ref{thm:three}]
We will use the semi-characteristic class defined by R.~Lee in
\cite{lee1}. We work with $\mathbb Q$-coefficient, in this case, the
semi-characteristic class of an odd dimensional manifold with a free
$\z/2$-action is a homomorphism
$$\chi_ {1/2}\colon  \Omega_ 5(\z/2) \to  L^5(\mathbb Q[\z/2]) \cong
\z/2 ,$$ where $\Omega_ 5(\z/2)$ is the bordism group of closed
smooth oriented manifolds with an orientation-preserving free
$\z/2$-action, and $L^5(\mathbb Q[\z/2])$ is the symmetric $L$-group
of the rational group ring $\mathbb Q[\z/2])$. We refer to
\cite{lee1} and \cite{davis-milgram2} for details.

Let $M^5$ be an oriented smooth $5$-manifold with fundamental group
$\z/2$, then the semi-characteristic class
$\chi_ {1/2}(\widetilde{M};\mathbb Q) \in \z/2$ is defined. There is
a characteristic class formula \cite[Theorem C]{davis-milgram2}
$$\chi_ {1/2}(\widetilde{M};\mathbb Q)=\langle w_ 4(M)\,\cup\,
f^*(\alpha), [M]\rangle ,$$ where $f\colon  M \to \rp^{\infty}$ is
the classifying map of the covering and $\alpha \in
H^1(\rp^{\infty};\z/2)$ is the nonzero element. On the other hand,
$\chi_ {1/2}(\widetilde{M};\mathbb Q)$ is identified with (see
\cite[p.57]{davis-milgram2})
$$\begin{array}{cl}
\hat{\chi}_ {1/2}(\widetilde{M};\mathbb Q)  & : =\dim _{\mathbb Q}H _0(\widetilde{M};\mathbb Q) +\dim_ {\mathbb
Q}H _1(\widetilde{M};\mathbb Q)+\dim_ {\mathbb
Q}H_ 2(\widetilde{M};\mathbb Q) \pmod{2}\\
 & \equiv 1+r \pmod{2} .\\
\end{array}
$$

\smallskip
\noindent
\underline{Type II:} the Wu classes of $M$ are $v_ 1=0$ and $v_ 2=0$
since $w_ 1(M)=w_ 2(M)=0$. Therefore $w_ 4(M)=Sq^2v_ 2=0$. This means
$r$ is odd.

\smallskip
\noindent
\underline{Type III:} the Wu classes of $M$ are $v_ 1=0$ and $v_
2=w_ 2(M)=t^2$. Therefore $w_ 4(M)=Sq^2v_ 2=t^4$ and $\langle w_
4(M)\,\cup\, f^*(\alpha), [M]\rangle =\langle \alpha^5, \bar{\nu}_ *[M]
\rangle$. By the Atiyah-Hirzebruch spectral sequence, there is a
nonsplit exact sequence
$$0 \to \z/8 \to \Omega_ 5^{\Spin}(\rp^{\infty};2\eta) \to
H _5(\rp^{\infty}) \to 0.$$ Note that the bordism class $[M,
\bar{\nu}] \in \Omega_5^{\Spin}(\rp^{\infty}; 2\eta)$ corresponds to
the $\Pin^+$-bordism class of a characteristic submanifold, which
we denote by $q$. Therefore $\bar{\nu}_
*[M] \equiv q \pmod{2}$. This implies $r+q$ is odd.

\smallskip
 \noindent
 \underline{Type I:} the Wu classes of $M$ are $v_ 1=0$ and
$v _2=w_ 2(M)=\bar{\nu}^*w_ 2(\gamma)$. Therefore $w_ 4(M)=Sq^2v_
2=\bar{\nu}^*w_ 2(\gamma)^2$ and $\langle w_ 4(M)\,\cup\, f^*(\alpha),
[M]\rangle =\langle \alpha \,\cup\, \beta^2, \bar{\nu}_
*[M] \rangle$. Check on the generators of $\Omega_ 5^{\Spin}(\rp^{\infty} \times \cp^{\infty};\gamma)$, $\rp^5\scs(S^2\times \rp^3)$ with
$(q=1, s=0)$ and $\rp^5\scs (\cp^2 \times S^1)$ with $(q=1, s=1)$,
it is seen that $\langle \alpha \,\cup\, \beta^2, \bar{\nu}_
*[M] \rangle  \equiv q+s \pmod{2}$. This
implies the relation $q+s+r \equiv 1 \pmod{2}$.
\end{proof}

\section{Circle Bundles over $1$-connected $4$-manifolds}
\label{sec:six} As an application of the main results, in this section we study the classification of certain circle bundles over simply-connected $4$-manifolds.

Let $X^4$ be a simply-connected $4$-manifold,
smooth or topological,  and let $\xi$ be a complex line bundle over $X$, 
with first Chern class $c_ 1(\xi) \in H^2(X;\z)$. Choose a
Riemannian metric on $\xi$, and then the total space of the
corresponding circle bundle is a $5$-manifold $M$. The homotopy long
exact sequence of the fiber bundle shows that $\pi_ 1(M) \cong \z
/m$ if $\cxi$ is an $m$-multiple of a primitive element.

In
\cite{duan-liang1}, a classification of $M$ in terms of the topological
invariants of $X$ and $\cxi$ is obtained for $m=1$, using the
classification theorem of Smale and Barden. It is also known that
$H_2(M)$ is torsion-free of  $\rk H_ 2(X)-1$ and that $M$ is of fibered type.
 In this section, we will apply the classification results
to the $m=2$ case, to give classification of $M$ in terms of the
topological invariants of $X$ and $\cxi$. We will also identify $M$ in
the list of standard forms in Theorem  \ref{thm:four} and Theorem
\ref{thm:four.five}.

\subsection{Invariants of $M$}
In this subsection we collect the basic algebraic-topological
invariants of $M$.
\begin{Proposition}
Let $M^5$ be a circle bundle over a simply-connected $4$-manifold
$X$, with first Chern class $c_1(\xi)= 2\cdot \textrm{primitive}$,
then
\begin{enumerate}
\item
$\pi_ 1(M) \cong \mathbb Z/2$

\item
$H_ 2(M) \cong \mathbb Z^r$ where $r= \rk H_ 2(X)-1$.

\item
the $\pi_ 1(M)$--action on $H _2(\w{M})$ is trivial.

\item
the type of $M^5$ is given by

\medskip
\begin{center}
\begin{tabular}{|c|c|c|}
\hline
  type \textup{I} & type \textup{II} & type \textup{III} \\[0.3ex]
\hline
\vsp
$w_ 2(X) \ne 0$ & & \\[0.3ex]
 $w_ 2(X) \not \equiv c_ 1(\widetilde{\xi}) \pmod{2}$
&
$w_ 2(X)=0$ & $w_ 2(X) \equiv c_ 1(\widetilde{\xi}) \pmod{2}$\\[0.3ex]
\hline
\end{tabular}
\end{center}
\end{enumerate}
\end{Proposition}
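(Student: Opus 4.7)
The plan is to derive all four assertions from the basic fibration $S^{1} \to M \to X$ together with the hypothesis $c_{1}(\xi) = 2u$, where $u \in H^{2}(X;\z)$ is primitive. For (1), the tail of the homotopy long exact sequence
\[
\pi_{2}(X) \cong H_{2}(X) \stackrel{\partial}{\to} \pi_{1}(S^{1}) = \z \to \pi_{1}(M) \to 0
\]
has $\partial$ given by evaluation against $c_{1}(\xi) = 2u$; primitivity of $u$ gives $\mathrm{im}\,\partial = 2\z$, hence $\pi_{1}(M) \cong \z/2$. For (2), the homology Gysin sequence supplies an exact sequence
\[
0 \to H_{2}(M) \to H_{2}(X) \xrightarrow{\cap\, 2u} H_{0}(X) = \z,
\]
so $H_{2}(M) \cong u^{\perp}$; since $u$ is primitive and the intersection form on $X$ is unimodular, $u^{\perp}$ is a direct summand of $H_{2}(X) \cong \z^{r+1}$, hence free of rank $r$.

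For (3), I would first identify the universal cover explicitly: writing $\xi = \widetilde{\xi}^{\otimes 2}$ with $c_{1}(\widetilde{\xi}) = u$, the fibrewise squaring $S(\widetilde{\xi}) \to S(\xi) = M$ is a double cover, and $S(\widetilde{\xi})$ is simply-connected by the same argument as (1). Thus $\widetilde{M} = S(\widetilde{\xi})$, a circle bundle over $X$ with Euler class the primitive class $u$, and the nontrivial deck transformation is a bundle automorphism covering $\mathrm{id}_{X}$. The pull-back $\widetilde{\pi}^{*} \colon H^{2}(X) \to H^{2}(\widetilde{M})$ is surjective (the next Gysin term is $H^{1}(X)=0$), so the $\z/2$-action on $H^{2}(\widetilde{M})$ is trivial; universal coefficients transports triviality to $H_{2}(\widetilde{M})$.

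For (4), the vertical tangent bundle of a principal $S^{1}$-bundle is trivial, so $TM \oplus \varepsilon^{1} \cong \pi^{*}(TX \oplus \varepsilon^{1})$, whence $w_{2}(M) = \pi^{*}w_{2}(X)$, and similarly $w_{2}(\widetilde{M}) = \widetilde{\pi}^{*}w_{2}(X)$. The mod $2$ Gysin sequences give $\ker(\pi^{*}) = \langle \overline{c_{1}(\xi)} \rangle = 0$ and $\ker(\widetilde{\pi}^{*}) = \langle \bar{u} \rangle = \langle \overline{c_{1}(\widetilde{\xi})}\rangle$. Hence $w_{2}(M)=0$ iff $w_{2}(X)=0$ (type II), and $w_{2}(\widetilde{M})=0$ iff $w_{2}(X) \in \{0, \overline{c_{1}(\widetilde{\xi})}\}$; the three $w_{2}$-types thus correspond precisely to the three columns of the table.

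The whole proposition amounts to bookkeeping with Gysin sequences, with no genuine obstacle. The point requiring most care is the identification of $\widetilde{M}$ in (3) via the squaring-of-line-bundle trick, which is what makes the deck transformation cover $\mathrm{id}_{X}$ and hence act trivially on classes coming from the base.
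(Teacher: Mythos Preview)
Your proof is correct and follows essentially the same route as the paper: homotopy exact sequence and Gysin sequence for (1)--(2), identification of $\widetilde{M}$ as the square-root circle bundle with deck transformation covering $\mathrm{id}_X$ for (3), and the stable splitting $TM \cong \pi^*TX \oplus \varepsilon^1$ for (4). The only differences are cosmetic---the paper uses injectivity of $p_*$ on $H_2$ where you dually use surjectivity of $\widetilde{\pi}^*$ on $H^2$, and the paper adds a remark (citing Duan--Liang) that the tangent-bundle identity in (4) persists when $X$ is merely topological, since $X\setminus\{pt\}$ is always smoothable.
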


\begin{proof}
First of all, the homotopy long exact sequence
$$\pi_ 1(S^1) \to \pi_ 1(M) \to \pi_ 1(X)$$ implies that $\pi_ 1(M)$ is
a cyclic group. The Gysin sequence
$$0 \to H_ 2(M) \to H_ 2(X) \xrightarrow{\cap\, c_ 1}
 H_ 0(X) \to H_ 1(M) \to 0$$
shows that $H_ 2(M)$ is torsion-free of rank equal to $\rk H_ 2(X) -1$
and $H _1(M) \cong \z/2$ since $c_ 1(\xi)= 2\cdot (\textrm{primitive})$.
Note that the universal cover $\widetilde{M}$ is a circle bundle
over $X$, denoted by $\widetilde{\xi}$, with first Chern class
$c_ 1(\widetilde{\xi})=\frac{1}{2}c_ 1(\xi)$. The $\pi _1(M)$-action on
$\widetilde{M}$ is the antipodal map on each fiber, and thus the
commutative diagram
$$\xymatrix{
H_ 2(\widetilde{M}) \ar[r]^{T_ *} \ar[dr]^{p_*}  & H_ 2(\widetilde{M}) \ar[d]^{p_*}\\
                                           & H_ 2(X)}
                                           $$
shows that the action on $H_ 2(\widetilde{M})$ is trivial. For the
Stiefel-Whitney class, if $X$ is smooth, we have $TM \oplus \mathbb
R =p^*(TX \oplus \xi)$ (where $p$ is the projection map), this
implies $w_ 2(M)=p^*w _2(X)$. In general, $X-pt$ admits a smooth
structure, then the same argument holds, see \cite[Lemma 3]{duan-liang1}.
\end{proof}

\subsection{Smoothings of $M$}
\begin{Proposition}
Let $\xi\colon  S^1 \hookrightarrow M^5 \to X$ be a nontrivial
circle bundle over a closed, simply-connected, topological
$4$-manifold. If $c_1(\xi)$ is an odd multiple of a primitive
element, then $M$ is smoothable; if $c_1(\xi)$ is an even multiple
of a primitive element, then $M$ admits a smooth structure if and
only if $KS(X)=0$.
\end{Proposition}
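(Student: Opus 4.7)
The plan is to reduce the question to a computation of the Kirby--Siebenmann obstruction of $M$, using the fact that in dimension $5$ a closed topological manifold admits a smooth structure if and only if $KS \in H^{4}(M;\z/2)$ vanishes. Indeed, the smooth and PL categories coincide in dimension $5$, and since $\pi_{i}(\Top/\mathrm{PL})=0$ for $i \neq 3$ and $\pi_{3}(\Top/\mathrm{PL})=\z/2$, the only obstruction to a PL structure on $M$ is the primary obstruction $KS(M)$.

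First I would establish the pullback formula $KS(M) = p^{*}KS(X) \in H^{4}(M;\z/2)$, where $p \colon M \to X$ is the bundle projection. Since $p$ is a principal $S^{1}$-bundle, the stable tangent microbundle of $M$ splits as $TM \cong p^{*}TX \oplus \tau_{f}$ with $\tau_{f}$ the vertical line microbundle, trivialized by the generator of the fiberwise $S^{1}$-action. Stably $TM \cong p^{*}TX$, so the classifying map $M \to B\Top$ factors through $X$ and the naturality of the universal Kirby--Siebenmann class yields $KS(M) = p^{*}KS(X)$.

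Next I would analyze $p^{*}\colon H^{4}(X;\z/2) \to H^{4}(M;\z/2)$ via the mod-$2$ Gysin sequence
$$H^{2}(X;\z/2) \xrightarrow{\,\cup\, \bar{c}_{1}} H^{4}(X;\z/2) \xrightarrow{p^{*}} H^{4}(M;\z/2),$$
where $\bar{c}_{1} = c_{1}(\xi) \bmod 2$. Write $c_{1}(\xi) = m\cdot y$ with $y \in H^{2}(X;\z)$ primitive. If $m$ is odd, then $\bar{c}_{1} = \bar{y} \neq 0$ (since a primitive integral class is not $2$-divisible), and by unimodularity of the $\z/2$ intersection form on the closed simply-connected $4$-manifold $X$ there exists $z \in H^{2}(X;\z/2)$ with $\bar{y} \cup z$ equal to the generator of $H^{4}(X;\z/2) \cong \z/2$. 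Hence $\cup \bar{c}_{1}$ is surjective, $p^{*} = 0$, and $KS(M) = 0$, so $M$ is smoothable. If $m$ is even, then $\bar{c}_{1} = 0$, so $\cup \bar{c}_{1} = 0$ and $p^{*}$ is injective; therefore $KS(M) = p^{*}KS(X)$ vanishes if and only if $KS(X) = 0$.

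The main technical point is justifying the pullback formula $KS(M) = p^{*}KS(X)$ purely topologically, since $X$ is only a topological $4$-manifold and $TX$ is a microbundle: one needs the microbundle splitting of $TM$ together with the naturality of $KS$ as a characteristic class of stable topological microbundles. Once this identification is in hand, the remainder of the argument is the Gysin-sequence computation above combined with Poincar\'e duality on $X$.
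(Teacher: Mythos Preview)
Your argument is correct and is essentially the same as the paper's: both identify $KS(M)=p^{*}KS(X)$ from the stable splitting $TM\oplus\mathbb{R}\cong p^{*}(TX\oplus\xi)$ and then analyze $p^{*}$ on $H^{4}(-;\z/2)$ via the Gysin sequence. The only minor difference is in the odd case, where the paper bypasses the Gysin computation by observing directly that $H^{4}(M;\z/2)\cong H_{1}(M;\z/2)=0$ (since $\pi_{1}(M)\cong\z/m$ with $m$ odd), so the obstruction group itself vanishes; your route through surjectivity of $\cup\,\bar c_{1}$ reaches the same conclusion with one extra but entirely elementary step.
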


\begin{proof}
Let $M^5$ be a topological $5$-manifold, then by
\cite{kirby-siebenmann1}, the obstruction for smoothing $M$ lies in
$H^4(M;\pi_3(Top/O))=H^4(M;\pi_3(Top/PL))=H^4(M; \mathbb Z/2) \cong
H_1(M;\z/2)$. The latter group is trivial if $c_1(\xi)$ is an odd
multiple of a primitive element. On the other hand, we have
$TM\oplus \mathbb R =\pi^*(TX \oplus \xi)$, where $\pi$ is the
projection map. Therefore the obstruction for smoothing $M$ is
$\pi^*KS(X)$. It is seen from the Gysin sequence that $\pi^*\colon
H^4(X;\z/2) \to H^4(M;\z/2)$ is injective if $c_1(\xi)$ is an even
multiple of a primitive element. Therefore $M$ admits a smooth
structure if and only if $KS(X)=0$.
\end{proof}

Now we give a geometric description of the characteristic
submanifold of a circle bundle over simply-connected $X^4$.

\begin{Lemma}\label{three}
Let $\xi\colon  S^1 \hookrightarrow M^5 \to X$ be a circle bundle,
$\pi_1(M) \cong \z /2$. Let $F\subset X$ be an embedded surface dual
to $c_1(\widetilde{\xi})$, $N(F)$ be a tubular neighborhood of $F$
in $X$, $S^1 \hookrightarrow B \to F$ be the restriction of $\xi$ on
$F$. Then there is a double cover map $\partial N(F) \to B$ and the
characteristic submanifold of $M$ is $P^4=(X-\mathring{N}(F))\,\cup_
{\partial}\, B$.
\end{Lemma}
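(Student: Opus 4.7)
The plan is to realize the double cover $\w P\subset \w M$ of the characteristic submanifold as the closure of two antipodal partial sections of $\w\xi$, verify that $\w M\setminus \w P$ splits into two $T$-interchanged pieces, and compute the quotient $P=\w P/T$ directly.

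First I would exploit simple-connectedness of $X$ to identify $\xi\cong \w\xi^{\otimes 2}$ (complex line bundles over $X$ are classified by $c_1$), so that the double cover $\w M=S(\w\xi)\to M=S(\w\xi^{\otimes 2})$ is the fiberwise squaring $v\mapsto v^2$; restricting to $F$ with $\w\xi|_F\cong \nu F$ (same $c_1$, namely the self-intersection $e(\nu F)$) then yields the asserted double cover $\phi\colon \partial N(F)=S(\nu F)\to S(\nu F^{\otimes 2})=B$. Next, since $F$ is Poincar\'e dual to $c_1(\w\xi)$, I may pick a transverse section $s\colon X\to \w\xi$ vanishing exactly along $F$; the radial projections $\pm s/|s|$ are sections of $\w M$ over $X\setminus F$, and I let $\Sigma_\pm\subset \w M$ be the closures of their graphs. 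In a local normal-form chart where $s(u,v)=v\in\mathbb{C}$ near $F$, the parameterization $(u,r,\theta)\mapsto (u,re^{i\theta},\pm e^{i\theta})$ exhibits each $\Sigma_\pm$ as a smooth 4-manifold whose boundary (at $r=0$) maps onto $\w M|_F$; under the tautological identification $\w M|_F\cong \partial N(F)$, the attaching map of $\partial\Sigma_+$ is the identity, while that of $\partial\Sigma_-$ is the antipode $v\mapsto -v$. Hence $\w P:=\Sigma_+\cup \Sigma_-$ is a closed 4-manifold, namely two copies of $X\setminus \mathring N(F)$ glued along $\partial N(F)$ via the antipode.

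I would then verify that $\w P$ is the double cover of a characteristic submanifold. The deck transformation $T$ of $\w M\to M$ is fiberwise antipode, so it exchanges $\pm s/|s|$, interchanges $\Sigma_+$ with $\Sigma_-$, and acts as the antipode on the common boundary $\w M|_F$. On each fiber $S^1$ over $p\notin F$, $\w P$ consists of the two antipodal points $\pm s(p)/|s(p)|$ and $\w M\setminus \w P$ is the union of the two open arcs between them; globally these arcs assemble into two components of $\w M\setminus \w P$ freely interchanged by $T$, which is the defining property of a characteristic submanifold (see Section~\ref{sec:two}). The quotient $P=\w P/T$ is then computed directly: $T$ identifies $\Sigma_+$ with $\Sigma_-$ via the identity on $X\setminus \mathring N(F)$, collapsing the two copies to one, while on the common boundary $\partial N(F)$ the $T$-action is the antipode with quotient $\partial N(F)/\{\pm 1\}=B$ via $\phi$. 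This yields $P=(X\setminus \mathring N(F))\cup_\phi B$, as claimed.

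The technical core I expect to handle carefully is the local analysis near $F$: showing that the closures $\Sigma_\pm$ are smooth 4-manifolds with boundary precisely $\w M|_F$, and pinpointing the antipodal discrepancy between the two boundary attaching maps. Once these normal-form computations are in hand, the verification that $\w P$ disconnects $\w M$ into two $T$-swapped pieces and the passage to the quotient $P$ are essentially formal.
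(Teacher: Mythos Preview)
Your argument is correct, but the paper takes a different and somewhat shorter route. Rather than constructing $\w P$ by hand, the paper observes that since $c_1(\xi)=2\cdot(\text{primitive})$, the bundle $\xi$ is pulled back from the model circle bundle $\rp^5\to\cp^2$ (with $c_1=2\cdot\text{generator}$) via a map $g\colon X\to\cp^2$; the covering map $f\colon M\to\rp^5$ then classifies $\pi_1(M)$, so $P=f^{-1}(\rp^4)$ by the transversal-preimage definition of the characteristic submanifold. The decomposition $\rp^4=D^4\cup_{S^3}\rp^3$ (with $D^4$ a section over $\cp^2\setminus N(\cp^1)$ and $\rp^3=\rp^5|_{\cp^1}$, glued by the double cover $S^3\to\rp^3$) then pulls back to $P=(X\setminus\mathring N(F))\cup_\partial B$, where $F=g^{-1}(\cp^1)$ and the identification $\nu F\cong g^*(\text{Hopf})$, $\xi|_F\cong\nu F\otimes\nu F$ gives the double cover $\partial N(F)\to B$.

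Your approach trades this universal-example trick for a direct construction inside $\w M$ via antipodal sections. This is more hands-on: it makes the splitting $\w M\setminus\w P=A\cup TA$ visible (your function $\mathrm{Im}(v e^{-i\psi})$, implicit in the local model, separates the two pieces) and avoids appealing to the known structure of $\rp^4\subset\rp^5$. The paper's argument is terser and ties in directly with the general definition of characteristic submanifolds as transverse preimages, which is also what gets used downstream (e.g.\ in Lemma~\ref{KS} and Theorem~\ref{thm:typethree}, where the map $f\colon P\to\rp^4$ reappears).
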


In other words, the characteristic submanifold $P$ is obtained by
removing a tubular neighborhood of an embedded surface dual to
$c_1(\widetilde{\xi})$ and then identifying antipodal points on on
each fiber.

\begin{proof}
Since $c_1(\xi)=2 \cdot$(primitive), the circle bundle is the
pull-back of the circle bundle over $\cp ^2$ with first Chern class
$= 2\cdot$(primitive):
$$\xymatrix@R-4pt{
S^1 \ar[r]^= \ar[d] & S^1 \ar[d] & \\
M^5 \ar[r]^f \ar[d] & \rp ^5 \ar[d] & \supset\ \rp^4=\rp^3 \cup D^4 \\
X \ar[r]^g & \cp ^2 & = \cp^1 \cup D^4 \\}$$ Now
$P=f^{-1}(\rp^4)=f^{-1}(D^4\cup_{S^3}\rp ^3)$. Let $F=g^{-1}(\cp
^1)$ be the transvere preimage of $\cp ^1$, then the normal bundle
$\nu$ of $F$ in $X$ is the pullback of the Hopf bundle, and the
restriction of $\xi$ on $F$ is $\nu \otimes \nu$, therefore there is
a double cover $\partial N(F) \to B$. It is easy to see that
$P^4=(X-\mathring{N}(F)\,\cup _{\partial}\, B$.
\end{proof}

\begin{Lemma}\label{KS}
Let $P$ be as above. Then $KS(P)=KS(X)$.
\end{Lemma}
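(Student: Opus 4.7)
The plan is to show that $X$ and $P$ are obtained by gluing the same outer piece $X-\mathring{N}(F)$ to two different $4$-dimensional fillings, both of which are smoothable, so that the Kirby--Siebenmann obstruction cannot change from one to the other.

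The first step is to make the two decompositions match. By Lemma~\ref{three}, along the common $3$-manifold $\partial N(F)$ we have the closed-manifold decompositions
$$X \;=\; (X - \mathring{N}(F)) \,\cup_{\partial N(F)}\, \bar{N}(F),\qquad P \;=\; (X - \mathring{N}(F)) \,\cup_{\partial N(F)}\, \bar{N}(B),$$
where $\bar{N}(F)\subset X$ is the closed $D^{2}$-bundle of the normal bundle of $F$, and $\bar{N}(B)\subset P$ is the closed tubular neighborhood of the codimension-one submanifold $B\subset P$. The latter is a $D^{1}$-bundle over $B$, and by the double-cover clause of Lemma~\ref{three} its boundary is identified with $\partial N(F)$ via exactly the covering map used in the definition of $P$, so both decompositions share the same gluing locus on the outer piece.

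The second step is to invoke additivity of the Kirby--Siebenmann invariant. For any closed topological $4$-manifold $Y=A\cup_{\Sigma}C$ cut along a separating $3$-manifold, evaluating the cohomological $ks$-class against the fundamental class decomposes as
$$KS(Y) \;=\; \langle ks(A), [A,\partial A]\rangle \,+\, \langle ks(C), [C,\partial C]\rangle \ \in \ \z/2,$$
since $[Y]$ splits as the sum of the two relative fundamental classes and $ks$ is a single $\z/2$-cohomology class restricting to the relative KS obstruction on each piece. Applied to $X$ and $P$, the outer contribution $\langle ks(X-\mathring{N}(F)),[\,\cdot\,,\partial]\rangle$ is identical in both cases, so $KS(P)-KS(X)$ equals the difference between the relative KS contributions of $\bar{N}(B)$ and $\bar{N}(F)$.

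The third step is to show both filler contributions vanish. Both $\bar{N}(F)$ and $\bar{N}(B)$ are linear disk-bundles over bases of dimension $\le 3$, and topological manifolds in dimensions $\le 3$ admit (essentially unique) smooth structures by Moise; a topological linear disk-bundle over a smooth base is then smoothable. Hence $\bar{N}(F)$ and $\bar{N}(B)$ are smoothable $4$-manifolds with boundary, so their relative Kirby--Siebenmann obstructions both vanish. Combining with the additivity identities yields $KS(P)=KS(X)$.

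The main obstacle I anticipate is purely expository rather than substantive: one must carefully reinterpret the shorthand gluing ``$(X-\mathring{N}(F))\cup_{\partial}B$'' of Lemma~\ref{three} as the honest $4$-manifold boundary identification above, with $B$ realized as the zero section of its normal $D^{1}$-bundle $\bar{N}(B)\subset P$ whose boundary is the double cover $\partial N(F)\to B$. Once this identification is in place, the additivity-plus-smoothability argument sketched above completes the proof.
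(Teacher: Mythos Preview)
Your approach is sound and leads to the same conclusion, but the packaging differs from the paper's. The paper argues via bordism: letting $V$ be the closed $\rp^2$-bundle over $F$ obtained from $N(F)$ by identifying antipodal points on each boundary circle, it observes that $P$ is bordant to $X\sqcup V$, and since $V$ is smooth and $KS$ is a bordism invariant (citing Hsu and Lashof--Taylor), one gets $KS(P)=KS(X)+KS(V)=KS(X)$. Your argument uses the same geometric decomposition but invokes additivity of $KS$ across a separating $3$-manifold instead of constructing the bordism explicitly; in fact your smooth filler $\bar N(F)\cup_{\partial N(F)}\bar N(B)$ is exactly the paper's $V$, so the two arguments are dual to one another. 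Your route is slightly more direct, while the paper's has the advantage of quoting a clean black-box result.

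One technical point needs tightening. The pairing $\langle ks(A),[A,\partial A]\rangle$ does not make sense as written: the class $ks(A)$ lives in $H^4(A;\z/2)$, which vanishes for any connected compact $4$-manifold with nonempty boundary, so this class is identically zero and cannot carry the information you want. The object you actually need is the \emph{relative} Kirby--Siebenmann obstruction in $H^4(A,\partial A;\z/2)\cong\z/2$, namely the obstruction to extending the canonical PL structure on $\partial A$ over all of $A$. This relative class does satisfy the additivity $KS(A\cup_\Sigma C)=ks(A,\partial A)+ks(C,\partial C)$ and does vanish when the piece is smoothable, so once you make this replacement your argument goes through. Note, however, that justifying this additivity rigorously is essentially the bordism invariance that the paper invokes.
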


\begin{proof}
We identify $N(F)$ with the normal $2$-disk bundle, let $V$ be the
associated $\rp^2$-bundle obtained by identifying antipodal points
on $\partial N(F)$. Then by the construction,
$$P=X\cup_{N(F)\times \{0\}} N(F) \times I \cup_{N(F) \times \{1\}}V.$$
Therefore $P$ is bordant to $X \,\sqcup \,V$. It was shown by Hsu
\cite{hsu1} and Lashof-Taylor \cite{lashof-taylor1} that the Kirby-Siebenmann
invariant is a bordism invariant, thus $KS(P)=KS(X)+KS(V)=KS(X)$
since $V$ is smooth.
\end{proof}

\subsection{Classification}

Now we can give a classification of circle bundles over
$1$-connected $4$-manifolds, and identify them with the standard forms
in Theorem  \ref{thm:four} and Theorem \ref{thm:four.five}, in terms
of the topology of $X$ and $\xi$.

\smallskip

For the type II manifolds it is an immediate consequence of Theorem
\ref{thm:one} and Theorem \ref{thm:two}.

\begin{Theorem}[type II]\label{thm:typetwo}
Let $X$ be a closed, simply-connected, topological spin
$4$-manifold, $\xi\colon  S^1 \hookrightarrow M^5 \to X$ be a circle
bundle over $X$ with $c_1(\xi)=2\cdot$(primitive). Then we have
\begin{enumerate}\addtolength{\itemsep}{0.3\baselineskip}
\item if $KS(X)=0$, then $M$ is smoothable and $M$ is diffeomorphic
to $$(S^2 \times \mathbb R \mathrm P^3) \scs ((\sharp_k\,  S^2
\times S^2)\times S^1);$$

\item if $KS(X)=1$, then $M$ is non-smoothable and $M$ is
homeomorphic to $$*(S^2 \times \mathbb R \mathrm P^3) \scs
((\sharp_k\,  S^2 \times S^2)\times S^1).$$
\end{enumerate}
Where $k=\rk H_2(X)/2-1$.
\end{Theorem}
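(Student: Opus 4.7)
The plan is to reduce the theorem to the main classification results (Theorem \ref{thm:one} and Theorem \ref{thm:two}) by computing the three invariants — $w_2$-type, $\rk H_2(M)$, and the class of the characteristic submanifold $[P]$ — and then matching them with the standard forms provided by Theorem \ref{thm:four} and Theorem \ref{thm:four.five}. Most of the ingredients are already in place from the preceding subsections.

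First I would verify that $M$ is of type II. Since $TM \oplus \mathbb R = \pi^*(TX \oplus \xi)$ and $\xi$ is a complex line bundle, $w_2(M) = \pi^*w_2(X) = 0$ because $X$ is spin. Next I compute $\rk H_2(M)$. The Gysin sequence (used already in the proof of the first Proposition of Section \ref{sec:six}) gives $\rk H_2(M) = \rk H_2(X) - 1$. Since $X$ is a closed simply-connected topological spin $4$-manifold, the intersection form on $H_2(X;\z)$ is even unimodular; by the classification of such forms the rank $\rk H_2(X)$ is even, say $\rk H_2(X) = 2(k+1)$, so $\rk H_2(M) = 2k+1$, which matches the rank of the candidate standard forms.

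Next I would identify $[P] \in \Omega_4^{\TopPin^-}/\pm$. By Theorem \ref{thm:two} and the computation table in Section \ref{sec:three}, this group is $\z/2$ and the invariant is $KS(P)$. Lemma \ref{KS} gives $KS(P) = KS(X)$, so the invariant of $M$ is exactly $KS(X)$. The two candidate homeomorphism types in the theorem, $(S^2 \times \rp^3) \scs ((\sharp_k\, S^2 \times S^2) \times S^1)$ and $*(S^2 \times \rp^3) \scs ((\sharp_k\, S^2 \times S^2) \times S^1)$, both have $\rk H_2 = 2k+1$ and $w_2 = 0$; they are distinguished precisely by the Kirby--Siebenmann invariant of $P$, which is $0$ for the smooth form (listed in Theorem \ref{thm:four}) and $1$ for the non-smooth form (by the construction of $*(S^2 \times \rp^3)$ preceding Theorem \ref{thm:four.five}). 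Theorem \ref{thm:two} then produces the claimed homeomorphism.

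Finally, for smoothability I would invoke the proposition in Section \ref{sec:six} on smoothing $M$: when $c_1(\xi)$ is an even multiple of a primitive class, $M$ is smoothable if and only if $KS(X) = 0$. Thus in case (1) we may apply Theorem \ref{thm:one} and, since $\Omega_4^{\Pin^-} = 0$, upgrade the homeomorphism to a diffeomorphism. In case (2) the manifold $M$ is genuinely non-smoothable, which is why we can only conclude a homeomorphism with the starred form. The main obstacle is essentially bookkeeping: one must be careful that $\rk H_2(X)$ really is even (using the even unimodular structure forced by simple-connectivity and spin) so that $k = \rk H_2(X)/2 - 1$ is a non-negative integer, and that the claimed standard forms indeed realize the stated KS values, both of which follow from material already developed in Sections \ref{sec:three}--\ref{sec:six}.
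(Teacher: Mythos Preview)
Your proposal is correct and follows exactly the approach the paper intends: the paper itself gives no separate proof of this theorem, declaring it ``an immediate consequence of Theorem \ref{thm:one} and Theorem \ref{thm:two}'' together with the remark that $\rk H_2(X)$ is even for spin $X$. Your write-up simply unpacks this immediate consequence, invoking the Proposition in \S\ref{sec:six} for the $w_2$-type and $\rk H_2(M)$, Lemma \ref{KS} for $KS(P)=KS(X)$, the smoothing proposition for smoothability, and the standard forms of Theorems \ref{thm:four} and \ref{thm:four.five}; nothing is missing or different.
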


\begin{remark} Note that for a spin $4$-manifold $X$,
 $\rk H_2(X)$ is even, and thus $k$ is an integer.
\end{remark}

For smooth manifolds of type III, we do not know a good invariant
detecting the bordism group $\Omega_4^{\Pin^+}$. Therefore we could
only determine the diffeomorphism type up to an ambiguity of order
2. This is based on the following exact sequence (see \cite[\S
5]{kirby-taylor2})
$$ 0 \to \z /2 \to \Omega_4^{\Pin^+} \stackrel{\cap\, w_1^2}{\longrightarrow} \Omega_2^{\Pin^-} \to 0,$$
where $\cap\, w_1^2$ is the operation of taking a submanifold dual to
$w_1^2$. The generators of $\Omega_2^{\Pin^-}$ is $\pm \rp^2$ and
$\cap\, w_1^2$ maps $\pm \rp^4$ to $\pm \rp^2$. The image of $[P]$ in
$\Omega_2^{\Pin^-}$ can be determined from the data of the circle
bundle.

In the topological case, we have an epimorphism (see \cite[\S
9]{kirby-taylor2})
$$\Omega_4^{\TopPin^+} \to \Omega_2^{\TopPin^-} \cong \z/8 ,$$
which is an isomorphism on the subgroup generated by $\rp^4$. By
Lemma \ref{KS}, we have $KS(P)=KS(X)$. Therefore by Theorem
\ref{thm:two}, we have a complete topological classification.

\begin{Theorem}[type III]\label{thm:typethree}
Let $X$ be a closed, simply-connected  topological $4$-manifold, and let
$\xi\colon  S^1 \hookrightarrow M^5 \to X$ be a circle bundle over
$X$ with $c_1(\xi)=2\cdot$(primitive) and $w_2(X) \equiv
c_1(\widetilde{\xi}) \pmod{2}$. Then we have
\begin{enumerate}\addtolength{\itemsep}{0.3\baselineskip}
\item if $X$ is smooth, then the diffeomorphism type of $M$ (with the induced smooth structure) is determined up to
an ambiguity of order $2$ by $\rk H_2(X)$ and $ \langle
c_1(\widetilde{\xi})^2, [X]\rangle \in (\z/8)/\pm =\{0,1,2,3,4\}$.

\item $M$ is homeomorphic to $X^5(p,q) \scs ((\sharp_k\,  S^2 \times S^2)\times S^1)$, where $q =
\langle c_1(\widetilde{\xi})^2, [X]\rangle \in (\z/8)/\pm
=\{0,1,2,3,4\}$, $k=(\rk H_2(X)-(3+(-1)^q)/2)/2$, $p=KS(X)$.
\end{enumerate}
\end{Theorem}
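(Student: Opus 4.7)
The plan is to extract from the data $(X, \xi)$ the classifying invariants of Theorem \ref{thm:one} and Theorem \ref{thm:two}---namely $r = \rk H_2(M)$ and the Pin-bordism class $[P]$ of a characteristic submanifold---and then identify $M$ with a standard form from Theorem \ref{thm:four} or Theorem \ref{thm:four.five}. The Gysin sequence for $\xi$ immediately yields $r = \rk H_2(X) - 1$, which (once $q$ is known) determines $k$ via the type~III rank relation $r = 2k + (1+(-1)^q)/2$ in Theorem \ref{thm:four}.

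For the class $[P]$, Lemma \ref{three} gives the concrete model
$$P^4 = (X - \mathring{N}(F)) \cup_\varphi B,$$
where $F \subset X$ is a closed surface Poincar\'e dual to $c_1(\w\xi)$, $B = \xi|_F$, and $\varphi \colon \partial N(F) \to B$ is the fiberwise double cover. I would then compute the image of $[P]$ under the epimorphism
$$\cap\, w_1^2 \colon \Omega_4^{\Pin^+} \twoheadrightarrow \Omega_2^{\Pin^-} \cong \z/8$$
of the exact sequence stated just before the theorem. Using $w_1(P) = t|_P$ and the factorization $P \hookrightarrow M \to \rp^5 \supset \rp^4$ from the proof of Lemma \ref{three}, the surface dual to $w_1(P)^2$ is the transverse preimage of $\rp^2 \subset \rp^3 \subset \rp^4$, and sits entirely inside the $B$-piece of $P$. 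The claim to establish is that its Brown--Kervaire invariant, computed with the Pin$^-$-structure induced by the Spin-structure on $TM \oplus 2L$ via Lemma \ref{lem:pin-spin}, equals $\langle c_1(\w\xi)^2, [X] \rangle \in \z/8$. I would first verify this on the generating example $X = \cp^2$ with $\w\xi$ the hyperplane bundle---where $M = \rp^5$, $P = \rp^4$, and both sides equal $\pm 1$---then extend by additivity of both invariants under the natural connected-sum operation for pairs $(X, \w\xi)$ (which on the manifold side is the $\scs$ operation).

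With this key identification in hand the rest is formal. For part (1), the $\z/2$ kernel in the exact sequence shows that $[P] \in \Omega_4^{\Pin^+}/\pm$ is determined up to an ambiguity of order $2$ by $q = \langle c_1(\w\xi)^2, [X] \rangle \in (\z/8)/\pm = \{0,\ldots,4\}$; together with $\rk H_2(X)$, Theorem \ref{thm:one} then gives the claim. For part (2), Lemma \ref{KS} yields $KS(P) = KS(X) = p$, and the splitting $\Omega_4^{\TopPin^+} \cong \z/2 \oplus \z/8$ (with factors detected by $KS$ and by the image in $\Omega_2^{\TopPin^-}$) now pins down $[P] \in \Omega_4^{\TopPin^+}/\pm$ completely by the pair $(p,q)$. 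Theorem \ref{thm:two}, combined with the list in Theorem \ref{thm:four.five}, then produces the asserted homeomorphism $M \cong X^5(p,q) \scs ((\sharp_k\, S^2 \times S^2)\times S^1)$, with $k$ fixed by the rank condition.

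The main obstacle is the geometric identification of the Brown--Kervaire invariant of the dual surface---which lies entirely inside the twisted piece $B \subset P$, not in $X - \mathring N(F)$---with the intersection number $\langle c_1(\w\xi)^2, [X] \rangle$, and the careful tracking of the Pin$^-$-structure inherited from the Spin-structure on $TM \oplus 2L$ through Lemma \ref{lem:pin-spin}. Once this computation is in place, the exact sequence, Lemma \ref{KS}, and the classification theorems assemble the result without further difficulty.
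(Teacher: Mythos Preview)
Your overall strategy matches the paper's: compute $r$ via the Gysin sequence, push $[P]$ through the epimorphism $\cap\,w_1^2\colon\Omega_4^{\Pin^+}\to\Omega_2^{\Pin^-}\cong\z/8$, and then invoke Theorems \ref{thm:one}, \ref{thm:two} together with Lemma \ref{KS}. The difference is in how the key identity $\cap\,w_1^2([P]) = \langle c_1(\w\xi)^2,[X]\rangle\cdot[\rp^2]$ is established.

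The paper argues directly and geometrically. Using the bundle map $f\colon P\to\rp^4$ from Lemma \ref{three}, the surface $V$ dual to $w_1(P)^2$ is realized as $f^{-1}(\rp^2)$; decomposing $\rp^2 = D^2\cup S^1$ over $\cp^1 = D^2\cup\mathrm{pt}$ and pulling back along $g\colon F\to\cp^1$ of degree $d=\langle c_1(\w\xi)^2,[X]\rangle$ gives the explicit description $V=(F-D)\cup_\partial d\cdot S^1$, from which $[V]=d\cdot[\rp^2]\in\Omega_2^{\Pin^-}$ is read off. There is also a case split on the mod~$2$ degree of $f$: when it vanishes, the paper passes to $X\Sharp\cp^2$ (shifting both sides by $1$) to reduce to the nonvanishing case.

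Your proposed route---verify on $(\cp^2,H)$ and extend by additivity under connected sum of pairs---has a gap as stated: not every type~III pair $(X,\w\xi)$ is an iterated connected sum of copies of $(\cp^2,\pm H)$, so checking the identity on that one example and invoking additivity does not cover the general case. What would make this work is to show that both sides are bordism invariants of the pair $(X,c_1(\w\xi))$ in an appropriate bordism group (here the condition $c_1(\w\xi)\equiv w_2(X)\pmod 2$ is a $\Spin^c$-condition, so $\Omega_4^{\Spin^c}$ is the natural candidate) and then check on generators of that group. This is exactly the tack the paper takes for type~I (using $\Omega_4(BU(1))$ in the proof of Theorem \ref{thm:typeone}), so your instinct is sound; but for type~III the paper's direct identification of $V$ is shorter and sidesteps the need to analyze the bordism group.
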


\begin{proof}
We only need to prove (1), since the proof of (2) is similar. We see from
the proof of Lemma \ref{three} that $P=f^{-1}(\rp^4)$, where
$f\colon P \to \rp^4$ induces an isomorphism on $\pi_ 1$. If the mod
$2$ degree of $f$ is $1$, then the submanifold dual to $w_ 1(P)$ is
$f^{-1}(\rp^3)$, and the submanifold $V$ dual to $w_ 1(P)^2$ is
$f^{-1}(\rp^2)$. Now we have the following commutative diagram
$$\xymatrix{
S^1 \ar[r]^= \ar[d] & S^1 \ar[d] & \\
\partial N(F) \ar[r]^f \ar[d] & \rp ^3 \ar[d] & \ \supset \rp^2=D^2 \cup S^1 \\
F \ar[r]^g & \cp ^1 & = D^2 \cup pt \\}$$
Let $d=\deg g=\langle
c_ 1(\widetilde{\xi})^2, [X]\rangle$ and $D=g^{-1}(pt)=\{p_ 1,\cdots,
p_ d\}$, it is seen that $V=f^{-1}(\rp^2)=(F-D)\cup_ {\partial}\,d\cdot
S^1$ (where the glueing map is of degree $2$) and
$[V]=d\cdot[\rp^2]\in \Omega_ 2^{\Pin^-}$. If the mod $2$ degree of
$f$ is zero, then we consider the circle bundle over $X \Sharp
\cp^2$ with first Chern class $(c_ 1(\xi),2)$. The corresponding map
has nonzero mod $2$ degree, the image of the corresponding
characteristic submanifold in $\Omega_ 2^{\Pin^-}$ equals to that of
the original one plus $1$. Finally $\langle
(c _1(\widetilde{\xi}),1)^2, [X \Sharp \cp^2]\rangle = \langle
c_ 1(\widetilde{\xi})^2, [X]\rangle+1$. This proves the theorem.
\end{proof}

For the manifolds of type I, we have
\begin{Theorem}[type I]\label{thm:typeone}
Let $X$ be a closed, simply-connected non-spin topological
$4$-manifold, and let $\xi\colon  S^1 \hookrightarrow M^5 \to X$ be a circle
bundle over $X$ with $c_ 1(\xi)=2\cdot$(primitive) and $w_ 2(X) \not
\equiv c _1(\widetilde{\xi}) \pmod{2}$. We have

\begin{enumerate}\addtolength{\itemsep}{0.3\baselineskip}
\item if $KS(X)=0$, then $M$ is smoothable and

\begin{itemize}\addtolength{\itemsep}{0.3\baselineskip}
\item if $\langle w_ 2(X)^2, [X]\rangle \equiv \langle c_
1(\widetilde{\xi})^2,[X]\rangle \pmod{2}$, then $M$ is diffeomorphic
to
$$\XX{q} \scs (S^2 \times \rp ^3) \scs ((\sharp_k\,  S^2 \times
S^2)\times S^1),$$ where $q = \langle c_ 1(\widetilde{\xi})^2,
[X]\rangle \in (\z/8)/\pm = \{0,1,2,3,4\}$ and
$$k=\frac{1}{2}(\rk H_ 2(X)-\frac{1}{2}(7+(-1)^q));$$

\item if $\langle w_ 2(X)^2, [X]\rangle  \not \equiv \langle c_
1(\widetilde{\xi})^2,[X]\rangle \pmod{2}$, then $M$ is diffeomorphic
to $$\XX{q} \scs (\cp^2 \times S^1) \scs ((\sharp _k\, S^2 \times
S^2)\times S^1),$$ where $q = \langle c_ 1(\widetilde{\xi})^2,
[X]\rangle \in (\z/8)/\pm = \{0,1,2,3,4\} $ and $$k=\frac{1}{2}(\rk
H_ 2(X)-\frac{1}{2}(5+(-1)^q)).$$
\end{itemize}

\item if $KS(X)=1$, then $M$ is non-smoothable and
\begin{itemize}\addtolength{\itemsep}{0.3\baselineskip}
\item if $\langle w_ 2(X)^2, [X]\rangle \equiv \langle c_
1(\widetilde{\xi})^2,[X]\rangle \pmod{2}$, then $M$ is homeomorphic
to
$$X^5(1,q) \scs (S^2 \times \rp ^3) \scs ((\sharp_k\,  S^2 \times
S^2)\times S^1),$$ where $q = \langle c_ 1(\widetilde{\xi})^2,
[X]\rangle \in (\z/8)/\pm = \{0,1,2,3,4\} $ and $$k=\frac{1}{2}(\rk
H_ 2(X)-\frac{1}{2}(7+(-1)^q));$$

\item if $\langle w_ 2(X)^2, [X]\rangle  \not \equiv \langle c_
1(\widetilde{\xi})^2,[X]\rangle \pmod{2}$, then $M$ is homeomorphic
to
$$X^5(1,q) \scs (\cp^2 \times S^1) \scs ((\sharp _k\, S^2 \times
S^2)\times S^1),$$ where $q = \langle c_ 1(\widetilde{\xi})^2,
[X]\rangle \in (\z/8)/\pm = \{0,1,2,3,4\} $ and $$k=\frac{1}{2}(\rk
H_ 2(X)-\frac{1}{2}(5+(-1)^q)).$$
\end{itemize}
\end{enumerate}
\end{Theorem}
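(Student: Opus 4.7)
The plan is to deduce Theorem \ref{thm:typeone} from the classification Theorems \ref{thm:one} (smooth) and \ref{thm:two} (topological) by computing all the relevant invariants of $M$ in terms of $X$ and $\xi$. The first Proposition of Section \ref{sec:six}, together with the hypothesis $w_{2}(X)\neq 0$ and $w_{2}(X)\not\equiv c_{1}(\widetilde{\xi}) \pmod 2$, immediately places $M$ in $w_{2}$-type I, and the Gysin sequence for the circle bundle gives $\rk H_{2}(M)=\rk H_{2}(X)-1$. In the topological case, Lemma \ref{KS} identifies $KS(P)=KS(X)$. What remains is to compute the two components $(q,s)\in \Omega_{4}^{\Pin^{c}}/\pm \cong\{0,\ldots,4\}\times\{0,1\}$ of the characteristic submanifold class $[P]$.

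For the $s$-component (the $w_{2}^{2}$-invariant), I would apply the identity established in the proof of Theorem \ref{thm:five}:
\[
s=\langle w_{2}(P)^{2},[P]\rangle = \langle w_{2}(M)^{2}\,\cup\, t + t^{5},[M]\rangle,
\]
where $t\in H^{1}(M;\z/2)$ is the nonzero class. The formula $w_{2}(M)=\pi^{*}w_{2}(X)$ from Section \ref{sec:six}, together with the identity $t^{2}=\pi^{*}(c_{1}(\widetilde{\xi}) \bmod 2)$ (which follows from identifying the integral Bockstein $\beta(t)$ with the unique $2$-torsion class in $H^{2}(M;\z)=H^{2}(X;\z)/\langle 2c_{1}(\widetilde{\xi})\rangle$), together with the projection formula $\langle \pi^{*}\alpha\,\cup\, t,[M]\rangle=\langle \alpha,[X]\rangle$, reduces the computation to
\[
s\equiv\langle w_{2}(X)^{2},[X]\rangle + \langle c_{1}(\widetilde{\xi})^{2},[X]\rangle \pmod 2.
\]
This produces exactly the dichotomy between the two subfamilies in the conclusion.

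For the $q$-component (the arf invariant), I would adapt the geometric argument of Theorem \ref{thm:typethree}. Using Lemma \ref{three} to describe $P$ concretely as $(X-\mathring{N}(F))\,\cup_{\partial}\, B$, and the $\Pin^{c}$-structure on $TP$ coming via Lemma \ref{lem:TP} from a Spin-structure on $TM\oplus\pi^{*}\gamma_{X}$ for an integral lift $\gamma_{X}$ of $w_{2}(X)$, I would trace $[P]$ through the isomorphism $\Omega_{4}^{\Pin^{c}}\cong\Omega_{2}^{\Pin^{-}}(BU(1))$ given by $\cap\, c$ and identify its $\Omega_{2}^{\Pin^{-}}\cong\z/8$-part with $d\cdot[\rp^{2}]$, where $d=\deg g=\langle c_{1}(\widetilde{\xi})^{2},[X]\rangle$, just as in the type III case. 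The parity argument there, using connected sum with $\cp^{2}$ to switch the mod $2$ degree of the classifying map, handles the case when $\deg g$ is a priori ambiguous, and yields $q=\langle c_{1}(\widetilde{\xi})^{2},[X]\rangle\in (\z/8)/\pm$.

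With all invariants computed, the last step is to match $M$ with the standard forms of Theorems \ref{thm:four} and \ref{thm:four.five}: since $[S^{2}\times\rp^{2}]=0$ and $[S^{2}\times S^{2}]=0$ in $\Omega_{4}^{\Pin^{c}}$, the family $\XX{q}\scs(S^{2}\times\rp^{3})\scs((\sharp_{k}\, S^{2}\times S^{2})\times S^{1})$ has characteristic invariant $(q,0)$, while $[\cp^{2}]=(0,1)$ places the family with $\cp^{2}\times S^{1}$ at $(q,1)$; the rank formulas in Theorem \ref{thm:four} then pin down $k$, and the KS invariant settles the smoothable vs.~non-smoothable dichotomy. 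The step I expect to be the main obstacle is the arf computation in the third paragraph: unlike the $\Pin^{+}$ case of Theorem \ref{thm:typethree}, which was governed purely by $w_{1}^{2}$ and a dual embedded surface in $P$, one must carry the auxiliary line bundle $\gamma_{X}$ through the Gysin reduction to $\Omega_{2}^{\Pin^{-}}(BU(1))$ and verify that the arf invariant of the resulting $\Pin^{-}$-surface is unaffected by this twist and still evaluates to $d\cdot[\rp^{2}]\in\z/8$.
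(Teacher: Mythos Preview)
Your overall strategy matches the paper's: reduce to the classification theorems by computing $(q,s)\in\Omega_4^{\Pin^c}/\pm$ for the characteristic submanifold, then match with the standard forms. For the $q$-component you refer to the type III argument, and this is exactly what the paper does as well (``the $q$-component is determined as in the type III case''); your worry about carrying the auxiliary line bundle through the Gysin reduction is legitimate but the paper does not elaborate on this point either.

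The genuine difference is in the computation of the $s$-component. The paper argues by bordism: it first stabilizes $X$ by $S^2\times S^2$'s so as to assume $X$ smooth, then observes that $[P]$ depends only on the class of the pair $(X,F)$ in $\Omega_4(BU(1))\cong\Omega_4\oplus H_4(BU(1))$, and finally checks the two candidate $\z/2$-valued homomorphisms $[X,F]\mapsto\langle w_2(P)^2,[P]\rangle$ and $[X,c_1(\widetilde\xi)]\mapsto\langle w_2(X)^2+c_1(\widetilde\xi)^2,[X]\rangle$ agree on the generators $(\cp^2\Sharp(S^2\times S^2),(1,0,1))$ and $(\cp^2\Sharp(S^2\times S^2),(0,0,1))$. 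Your approach is instead a direct cohomological calculation in $M$: you invoke the homotopy-invariant formula $s=\langle w_2(M)^2\cup t+t^5,[M]\rangle$ from Theorem \ref{thm:five}, identify $w_2(M)=\pi^*w_2(X)$ and $t^2=\pi^*(c_1(\widetilde\xi)\bmod 2)$ via the Gysin sequence and the Bockstein, and then use fiber integration $\langle\pi^*\alpha\cup t,[M]\rangle=\langle\alpha,[X]\rangle$. This is correct and arguably cleaner: it avoids the smooth stabilization step and the case check on generators, and it works uniformly in the topological case without modification. The paper's bordism argument, on the other hand, makes the dependence of $[P]$ on the pair $(X,F)$ more transparent and fits naturally with the geometric description of $P$ in Lemma \ref{three}.
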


\begin{remark} Note that for a $4$-manifold $X$, $\langle
w_ 2(X)^2, [X]\rangle \equiv \rk H_ 2(X) \pmod{2}$. This
ensures that $k$ is an integer.
\end{remark}

\begin{proof}
We only need to prove (1); the proof of (2) is similar. Recall that
we have $\Omega_ 4^{\Pin^c} \cong \z/8 \oplus \z/2$, with generators
$\rp ^4$ and $\cp^2$.  Thus the $q$-component is determined as in
the type III case. The $s$-component of $P$ is determined by the
bordism number $\langle w_ 2(P)^2, [P]\rangle \in \z/2$. (Here we
use the notations given before Theorem \ref{thm:three}.) Since
$KS(X)=0$, there exists an integer $m$ such that $X_0=X \Sharp m(S^2
\times S^2)$ is smooth. Note that if we do the same construction on
$X_0$ we get $P_0=P \Sharp m(S^2 \times S^2)$, and $\langle w_
2(P_0)^2, [P_0]\rangle=\langle w_ 2(P)^2, [P]\rangle$. Therefore,  to
compute the $s$-component, we may assume that $X$ is smooth. Recall
that $P=(X-\mathring{N}(F))\,\cup_ {\partial}\, B$, it is seen that the
bordism class of $P$ is determined by the bordism class of the pair
$(X,F)$, which can be viewed as a singular manifold $(X,f) \in
\Omega_ 4(BU(1)) \cong \Omega_ 4 \oplus H_ 4(BU(1))$. We have two
homomorphisms
$$\Omega_ 4(BU(1)) \to \z/2, \ \ \ [X,F] \mapsto \langle w_ 2(P)^2,
[P]\rangle$$ and
$$\Omega_ 4(BU(1)) \to \z/2, \ \ \ [X,c_ 1(\widetilde{\xi})] \mapsto \langle w_ 2(X)^2+c_ 1(\widetilde{\xi})^2,
[X]\rangle .$$ By a check on the generators $(\cp^2 \Sharp (S^2 \times
S^2), c_ 1(\widetilde{\xi})=(1,0,1))$ and $(\cp^2 \Sharp (S^2 \times
S^2), c_ 1(\widetilde{\xi})=(0,0,1))$, we see that $s=\langle w_
2(P)^2, [P]\rangle = \langle w _2(X)^2+c_ 1(\widetilde{\xi})^2,
[X]\rangle \pmod{2}$. The two cases correspond to the values $s=0$
and $s=1$. For the proof of (2), the only change is that
$\Omega_4^{\Top} \cong \z \oplus \z/2$ with generators $\cp^2$ and
$*\cp^2 \Sharp \overline{\cp^2}$ \cite{hsu1}.
\end{proof}

\providecommand{\bysame}{\leavevmode\hbox to3em{\hrulefill}\thinspace}
\providecommand{\MR}{\relax\ifhmode\unskip\space\fi MR }
\providecommand{\MRhref}[2]{%
  \href{http://www.ams.org/mathscinet-getitem?mr=#1}{#2}
}
\providecommand{\href}[2]{#2}

\end{document}